\def\binfty {\dot{B}_{\lambda,\infty}^{\beta,\infty}(\mathbb R_+)}
\def\rr{{\mathbb R}}
\newcommand{\GG}{ {\mathop \cg \limits^{    \circ}}_1}
\def\rrp{{{\mathbb\rr}_+}}
\def\lttz{{L^t(\rrp,\, dm_\lz)}}
\newcommand{\R}{\mathbb{R}}
\def\cg{{\mathcal G}}
\newcommand{\GGone}{ {\mathop \cg \limits^{    \circ}}_1}
\def\fz{\infty}
\def\az{\alpha}
\def\supp{{\mathop\mathrm{\,supp\,}}}
\def\lz{\lambda}
\def\pa{\partial}
\def\wz{\widetilde}
\def\dmzy{{\,dm_\lz(y)}}
\def\inzf{{\int_0^\fz}}
\def\pplz{t\partial_tP_t^{[\lambda]}}
\def\ls{\lesssim}
\def\gs{\gtrsim}
\def\tbz{{\triangle_\lz}}
\def\dmz{{dm_\lz}}
\def\riz{{R_{\Delta_\lz}}}
\def\qlz{{Q^{[\lz]}_t}}
\def\plz{{P^{[\lz]}_t}}
\def\ltz{{L^2(\rr_+,\, dm_\lz)}}
\def\loz{{L^1(\rr_+,\, dm_\lz)}}
\def\lpz{{L^p(\rr_+,\, dm_\lz)}}
\def\hoz{{H^1(\rr_+,\, dm_\lz)}}
\def\dint{\displaystyle\int}
\def\r{\right}
\def\lf{\left}
\newtheorem{thm}{Theorem}[section]
\newtheorem{lem}[thm]{Lemma}%[section]
\newtheorem{prop}[thm]{Proposition}%[section]
\newtheorem{rem}[thm]{Remark}%[section]
\newtheorem{defn}[thm]{Definition}%[section]
\numberwithin{equation}{section}
\begin{document}

\arraycolsep=1pt

\title{\Large\bf  The Riesz Transform and Fractional Integral Operators\\ in the Bessel Setting}
\author{Jorge J. Betancor, Xuan Thinh Duong, Ming-Yi Lee, Ji Li and Brett D. Wick}

%\medskip
\date{}
\maketitle

%\color{blue}{BDW: LaTeX Warning: Label `P' multiply defined.  This needs to be fixed.\color{black}

\begin{center}
\begin{minipage}{13.5cm}\small

{\noindent  {\bf Abstract:}\  Fix $\lambda>0$. Consider  
the Bessel operator
$\triangle_\lambda:=-\frac{d^2}{dx^2}-\frac{2\lambda}{x}
\frac d{dx}$ on $\mathbb{R_+}$, where $\mathbb{R_+}:=(0,\infty)$ and $dm_\lambda:=x^{2\lambda}dx$ with $dx$ the Lebesgue measure.
We provide a deeper study of the Bessel Riesz transform and fractional integral operator via the related Besov and Triebel--Lizorkin spaces associated with $\triangle_\lambda$.
Moreover, we investigate some possible characterization of the commutator of fractional integral operator, which was missing in the literature of the Bessel setting. 

}

\end{minipage}
\end{center}

%\bigskip
%\tableofcontents
%
%
%\bigskip
\bigskip

{ {\it Keywords}: commutator;  Bessel operator; Bessel Riesz transform.}

\medskip

{{Mathematics Subject Classification 2010:} {42B30, 42B20, 42B35}}

\section{Introduction and Statement of Main Results}\label{s1}

\subsection{Background}
In 1965, Muckenhoupt and Stein in \cite{ms} introduced the harmonic function theory associated with the Bessel operator $\tbz$, defined by,
\begin{equation*}
\tbz :=-\frac{d^2}{dx^2}-\frac{2\lz}{x}\frac{d}{dx},\quad \lz>0.
\end{equation*}
They developed a theory in the setting of
$\tbz$ which parallels the classical one associated to the usual Laplacian $\triangle$.  Results on the $\lpz$-boundedness of conjugate
functions and fractional integrals associated with $\tbz$ were
obtained, where $p\in[1, \fz)$, $\rr_+:=(0, \fz)$ and $\dmz(x):= x^{2\lz}\,dx$.

The related elliptic partial differential equation is the following ``singular Laplace equation''
\begin{equation}\label{bessel laplace equation}
\triangle_{t,\,x} (u) :=-\pa_{t}^2u - \pa_{x}^2u-\frac{2\lz}{x}\pa_{x}u=0
\end{equation}
studied by Weinstein \cite{w48}, and Huber in higher dimension in \cite{Hu}, where they considered the generalised axially symmetric potentials, and obtained the properties of the solutions of this equation, such as the extension, the uniqueness theorem, and the boundary value problem for certain domains.

Since then, the Bessel context has been extensively studied;
see, for example, \cite{ak,bcfr,bfbmt,bfs,bhnv,k78,v08,yy} and the references therein.
Comparing to the standard Laplacian on Euclidean space, the key difference is that there is no Fourier transform in the Bessel setting. Thus, some fundamental results on $\mathbb R^n$ were only recently established in the Bessel setting via different approaches that avoided arguments involving the Fourier transform.

For example, in the Euclidean setting, the characterization of the boundedness of the commutator of the Riesz transforms via the BMO space was first established by Coifman--Rochberg--Weiss \cite{crw}.  In his setting, the first proof of the lower bound of the commutator utilized a Fourier expansion. A parallel version of \cite{crw} was established in the Bessel setting in \cite{dgklwy} and \cite{DLWY} via a real analysis method, and later on the characterization of  compactness of commutator in the Bessel setting was found in \cite{DLMWY}.

However, there are several other aspects of the commutator $[b,R_{\tbz }]$ of the Bessel Riesz transform that remain open. 

\noindent {\bf Question 1}:  The connection between the commutator of the Bessel Riesz transform and the Besov space or Triebel--Lizorkin space in the Bessel setting ?

Note that in $\mathbb R^n$ in the Euclidean setting a connection was established by M. Paluszy\'nski \cite{P}, where the approach depends heavily on the Fourier transform.

\noindent {\bf Question 2}:  The characterization of the endpoint boundedness of the commutator of the Bessel Riesz transform? That is, to investigate the necessary and sufficient conditions on the symbol $b$ such that $[b,R_{\tbz }]$ is bounded from the Bessel Hardy space to $L^1$, and from $L^\infty$ to the Bessel BMO space.

It turns out that a simple duplication of the classical result on $\mathbb R^n$ (\cite{Pe}), especially from $L^\infty$ to the Bessel BMO space, is not the suitable one, and hence this remains open till now.

\noindent {\bf Question 3}:  The characterization of the boundedness of the commutator $[b,\tbz ^{-\alpha/2}]$ of the fractional operator  $\tbz ^{-\alpha/2}$?

It turns out to be a more difficult question in comparison to the Euclidean setting. The argument is different from the classical result on $\mathbb R^n$, due to the intrinsic  structure of the Bessel Laplacian.

\bigskip
The aim of this paper is to address these questions in the Bessel setting.

%\color{blue}This introduction is missing motivation for what we are considering the questions in this paper. The abstract says we are fixing missing information in the literature.  Can we give more reasons to look at the questions we are after?  Explain the motivation and significance?  As it reads now, it is a little bit of ``generalization for generalization'' sake.\color{black}

\subsection{Main results}

We will regard $\mathbb{R}_{+}$ as a space of homogeneous type, in the sense of Coifman and Weiss, \cite{cw77}, with Euclidean metric (i.e., the absolute value $|\cdot|$) and measure $m_\lambda$. Specifically, for any $x\in\mathbb{R}_+$ and $r>0$, a set $I(x,r):=B(x,r)\cap \mathbb{R}_+$ is considered as a ball in $\mathbb{R}_{+}$, where $B(x,r)$ is a Euclidean ball with centre $x$ and radius $r$. It can be deduced from \cite{DLMWY} that $m_\lambda$ on $\mathbb{R}_{+}$ satisfies a doubling condition stated as follow:  for every $I\subset \mathbb{R}_{+}$,
\begin{align}
\label{doub}\min\{2^{2},2^{2\lambda+1}\}m_\lambda(I)\leq m_\lambda(2I)\leq 2^{2\lambda+1}m_\lambda(I).
\end{align}
Set $\mathbf Q=2\lambda+1$, which is the upper dimension of the measure $m_\lambda$.
\medskip

Next, consider $(\mathbb{R}_{+}, |\cdot|, m_\lambda)$ as a space of homogeneous type. We let $\{S_k\}_{k\in\mathbb Z}$ be a family of operators on $\mathbb R_+$ which is an approximation to the identity and set $D_k = S_k-S_{k+1}$. Full details are provided in Section 2.1. Moreover, for $0<\beta_0,\gamma_0<1$, we let $(\GG(\beta_0,\gamma_0))'$ denote the space of distributions on $(\mathbb{R}_{+}, |\cdot|, m_\lambda)$, with full details in Section 2.2.

We first consider the Besov space  $\binfty$ associated with $\tbz$ and then study the commutator with the Riesz transform and fractional integral operators in the Bessel setting.
\begin{defn}\label{def Besov}
Let $0<\beta_0,\gamma_0<1$. For $\beta\in (0,1)$, the Besov space $\binfty$  is the set of distributions $f$ in $(\GG(\beta_0,\gamma_0))'$ such that 
$\|f\|_{\binfty}:=\sup\limits_{\substack{k\in\mathbb Z, \ x \in \mathbb R_+}}2^{\beta k}|D_kf(x)|<\infty.
   $
\end{defn}    

Next, we provide the definition of the Triebel--Lizorkin spaces associated with $\tbz $ via the 
operator $\{D_k\}_{k\in\mathbb Z}$ from the
approximation to identity $\{S_k\}_{k\in\mathbb Z}$.

%We mention that in our current setting, we take the classical dyadic
%intervals as our dyadic system. That is, for each $k\in \mathbb Z$,
%$$\mathscr X^k:=\{\tau:\,\,I^k_\tau:=(\tau 2^k, (\tau+1)2^k]\}_{\tau\in \mathbb Z_+},$$
%where  $\mathbb Z_+:=\mathbb N\cup\{0\}$.

\begin{defn}\rm
Let $0<\beta_0,\gamma_0<1$. For $|\alpha|<1$, $1< p<\infty$ and $1\leq q\leq\infty$, the Triebel--Lizorkin spaces associated with $\tbz$ are defined as follows.  Set
$$\dot{\mathcal F}^{\alpha,q}_{\lambda,p}( \mathbb R_+) :=\big\{f\in (\GG(\beta_0,\gamma_0))' : \|f\|_{\dot{\mathcal F}^{\alpha,q}_{\lambda,p}} :<\infty\big\},$$
where
\begin{align*}
\|f\|_{\dot{\mathcal F}^{\alpha,q}_{\lambda,p}}
	=\begin{cases} \displaystyle \Big\|\Big\{\ \sum_{k\in \mathbb Z}  \big(2^{k\alpha}|D_k(f)|\big)^q \Big\}^{1/q}\Big\|_{\lpz}\quad & \text{if}\ \ 1\le q<\infty, \\[9pt]
	\displaystyle \Big\|\sup_{k\in \mathbb Z} 2^{k\alpha}|D_k(f)|\Big\|_{\lpz} &\text{if}\ \ q=\infty. \end{cases}
	\end{align*}
%where $x_{I_{\alpha}^k}$ is the center of of the dyadic interval $I_{\alpha}^k$ and $N$ is large fixed positive numbers.
	
%Similarly, we also define	
%$$\dot{\mathcal F}^{\alpha,q}_{p,\psi}=\{f\in L^2(\Bbb R^N, \omega) : \|f\|_{\dot{\mathcal F}^{\alpha,q}_{p,\psi}}<\infty\},$$
%where
%$$\|f\|_{\dot{\mathcal F}^{\alpha,q}_{p,\psi}}
%	=\begin{cases} \displaystyle \Big\|\Big\{\sum_{k\in \mathbb Z}\sum_{Q\in Q^k} \big(r^{k\alpha}|\psi_{r^k}(f)(x_Q)|\big)^q\chi_Q\Big\}^{1/q}\Big\|_{L^p_\omega}\quad & \text{if}\ \ 1\le q<\infty \\
%	\displaystyle \Big\|\sup_{k\in \mathbb Z}\sum_{Q\in Q^k} r^{k\alpha}|\psi_{r^k}(f)(x_Q)|\chi
%	_Q\Big\|_{L^p_\omega} &\text{if}\ \ q=\infty. \end{cases}.$$
\end{defn}
It is important to note that the definition of $\dot{\mathcal F}^{\alpha,q}_{\lambda,p}( \mathbb R_+) $ is independent of the choice of $\{D_k\}_{k\in\mathbb Z}$. When $q=\infty$, this is justified by our Theorem \ref{prop 3.1}. When $1\leq q<\infty$, this can be seen from the Plancherel--Polya type inequality for Triebel--Lizorkin spaces on space of homogeneous type from  Han, M\"uller and Yang \cite{hmy06,hmy08}.

This leads to our first main result:  characterisation for the Besov space  $\binfty$ via the commutator $[b,R_{\tbz }]$, which is defined as 
$[b,R_{\tbz }](f) = bR_{\tbz }(f)- R_{\tbz }(bf)$.
 \begin{thm}\label{thm0} Let $1<p<\infty, 0<\beta<1$. The following conditions are equivalent:
\begin{itemize}
\item[{\rm(a)}] $b\in \binfty$;
\item[{\rm(b)}] $[b,R_{\tbz }]$ is  bounded from $\lpz$ to $\dot F_{\lambda ,p}^{\beta,\infty}(\mathbb R_+)$.
%\item[{\rm(c)}] $[b,R_{\tbz }]$ is  bounded from $\lpz$ to $\lqz$ for $\frac1p-\frac1q=\frac{\beta}{ \bf Q }$ if $\frac1p>\frac{\beta}{ \mathbf Q }$.
\end{itemize}
\end{thm}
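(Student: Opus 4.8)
\emph{Overview and reduction.} The plan is to route both implications through the Hölder--Campanato description of $\binfty$. As a preliminary step I would record that, for $0<\beta<\min\{\beta_0,\gamma_0\}$ (which we may assume, since the spaces do not depend on these parameters), one has the equivalences
\[
\|b\|_{\binfty}\ \approx\ \sup_{x\neq y}\frac{|b(x)-b(y)|}{|x-y|^{\beta}}\ \approx\ \sup_{I}\frac1{r(I)^{\beta}}\,\frac1{m_\lambda(I)}\int_I|b-b_I|\,\dmz ,
\]
the balls $I\subset\rrp$ being those fixed in Section~2. The passage from Definition~\ref{def Besov} to the pointwise Hölder bound comes from the Calder\'on reproducing formula $b=\sum_k\widetilde D_kD_kb$: each summand is Hölder continuous with $\|\widetilde D_kD_kb\|_\infty\ls 2^{-k\beta}\|b\|_{\binfty}$ and with oscillation $\ls(2^k|x-y|)^{\beta_0}2^{-k\beta}\|b\|_{\binfty}$, and summing $\min\{1,(2^k|x-y|)^{\beta_0}\}2^{-k\beta}$ over $k\in\mathbb Z$ at the scale $2^{-k}\approx|x-y|$ produces $|x-y|^{\beta}$ (this is where $\beta_0>\beta$ is used). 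The reverse inequality follows at once from the cancellation $\int D_k(x,z)\,\dmz(z)=0$, and the identification with the Campanato seminorm is the usual Meyers--Campanato argument on a doubling space. I will use these quantities interchangeably, together with: that $R_{\tbz}$ is a Calder\'on--Zygmund operator on $(\rrp,|\cdot|,m_\lambda)$ with the kernel bounds from the references; that $M$ (the Hardy--Littlewood maximal operator) and $R_{\tbz}$ are bounded on $\lpz$, $1<p<\infty$; and the almost-orthogonality estimate that the kernel $K_k$ of $D_k\circ R_{\tbz}$ obeys $|K_k(x,w)|\ls(2^{-k})^{\epsilon}(2^{-k}+|x-w|)^{-\epsilon}\,m_\lambda\big(I(x,2^{-k}+|x-w|)\big)^{-1}$ for some $\epsilon>\beta$.

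\emph{Proof of (a)$\Rightarrow$(b).} It suffices to prove $\|[b,R_{\tbz}]f\|_{\dot F^{\beta,\infty}_{\lambda,p}}\ls\|b\|_{\binfty}\|f\|_{\lpz}$ for $f$ in a dense subclass of $\lpz$ (bounded, compactly supported), where $[b,R_{\tbz}]f$ is unambiguous since $b\in L^1_{\mathrm{loc}}$. Fixing $k$ and $x$ and using $[b,R_{\tbz}]f=[\,b-b(x),R_{\tbz}\,]f$ (with $b(x)$ frozen as a constant), one splits
\[
D_k\big([b,R_{\tbz}]f\big)(x)=D_k\big((b-b(x))R_{\tbz}f\big)(x)-D_k\big(R_{\tbz}((b-b(x))f)\big)(x)=:\mathrm I_k(x)-\mathrm{II}_k(x).
\]
For $\mathrm I_k$, insert $|b(z)-b(x)|\ls|x-z|^{\beta}\|b\|_{\binfty}$ and the size estimate of $D_k$ (decay exponent $\gamma_0>\beta$), split into dyadic annuli about $x$, and sum a geometric series to get $|\mathrm I_k(x)|\ls 2^{-k\beta}\|b\|_{\binfty}M(R_{\tbz}f)(x)$. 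For $\mathrm{II}_k$, write it as $\int K_k(x,w)(b(w)-b(x))f(w)\,\dmz(w)$, use again $|b(w)-b(x)|\ls|x-w|^{\beta}\|b\|_{\binfty}$ and the same annular decomposition (now using $\epsilon>\beta$) to get $|\mathrm{II}_k(x)|\ls 2^{-k\beta}\|b\|_{\binfty}Mf(x)$. Hence $\sup_k2^{\beta k}|D_k([b,R_{\tbz}]f)(x)|\ls\|b\|_{\binfty}\big(Mf(x)+M(R_{\tbz}f)(x)\big)$, and taking the $\lpz$ norm and using the $\lpz$ boundedness of $M$ and $R_{\tbz}$ finishes this implication.

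\emph{Proof of (b)$\Rightarrow$(a).} Here the plan is to test $[b,R_{\tbz}]$ on indicators of balls. Two facts are used: (i) by Theorem~\ref{prop 3.1} (independence of the space from $\{D_k\}$) together with the Plancherel--P\'olya type estimates, $\|M^\#_\beta h\|_{\lpz}\ls\|h\|_{\dot F^{\beta,\infty}_{\lambda,p}}$, where $M^\#_\beta h(x)=\sup_{I\ni x}r(I)^{-\beta}m_\lambda(I)^{-1}\int_I|h-h_I|\,\dmz$; and (ii) the explicit Bessel Riesz kernel admits, for every ball $I=I(x_0,r)$, a companion ball $\widetilde I$ of radius $r$ at distance $\approx Ar$ from $I$ ($A$ a large absolute constant, and the side on which $\widetilde I$ sits chosen according to whether $r\ls x_0$ or $r\gs x_0$, so that $\widetilde I\subset\rrp$ and the kernel is well behaved there) such that $R_{\tbz}(z,w)$ keeps a fixed sign for $z\in I,\ w\in\widetilde I$ and equals $c_0/m_\lambda(I)$ there up to a relative error $O(A^{-\epsilon})$. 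With $g=\mathbf 1_{\widetilde I}$ this yields, for $z\in I$, $[b,R_{\tbz}]g(z)=c_0'\big(b(z)-b_{\widetilde I}\big)+\mathrm{Err}(z)$ with $|c_0'|\approx1$, where $\mathrm{Err}$ is controlled (on $I$, and in its tails against $D_k(x_0,\cdot)$) by $A^{-\epsilon}$ times the Campanato seminorm of $b$ on $I$ and boundedly many dilates of $I$. Feeding this into (i) on the single ball $I$, using $\|g\|_{\lpz}=m_\lambda(\widetilde I)^{1/p}\approx m_\lambda(I)^{1/p}$ and the cancellation of $D_k$ to discard $b_{\widetilde I}$, gives
\[
\frac1{r^{\beta}m_\lambda(I)}\int_I|b-b_I|\,\dmz\ \ls\ \big\|[b,R_{\tbz}]\big\|_{\lpz\to\dot F^{\beta,\infty}_{\lambda,p}}+A^{-\epsilon}\big(\text{the same quantity on $I$ and its dilates}\big).
\]
Taking the supremum over all $I$ and choosing $A$ large enough to absorb the error term yields $\|b\|_{\binfty}\ls\|[b,R_{\tbz}]\|_{\lpz\to\dot F^{\beta,\infty}_{\lambda,p}}$, i.e. (a).

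\emph{Main obstacle.} The absorption in the last step is legitimate only once the Campanato seminorm of $b$ is known a priori to be finite; in $\mathbb R^n$ this is dispatched by convolving $b$ with an approximate identity and exploiting translation invariance of the Riesz transforms and of the norms, but none of this is available for $R_{\tbz}$ on $\rrp$. I expect the real work of this direction to be exactly this regularisation: approximate $b$ by functions $b_\varepsilon$ for which the Campanato seminorm is trivially finite (for instance by truncating the Poisson integrals $\plz b$, or by composing with $S_k$), show that $\|[b_\varepsilon,R_{\tbz}]\|_{\lpz\to\dot F^{\beta,\infty}_{\lambda,p}}$ does not exceed $\|[b,R_{\tbz}]\|_{\lpz\to\dot F^{\beta,\infty}_{\lambda,p}}$ in the limit, and pass to the limit by a Fatou argument; this, together with the careful bookkeeping of the two regimes $r\ls x_0$ and $r\gs x_0$ in the choice of $\widetilde I$ dictated by the non-convolution structure of the Bessel Riesz kernel, is what I would spend most of the effort on. The remaining ingredients — the almost-orthogonality kernel bounds, the $\lpz$ theory of $M$ and $R_{\tbz}$, the Calder\'on reproducing formula, the Campanato identification, and the sharp maximal characterisation of $\dot F^{\beta,\infty}_{\lambda,p}$ — are by now standard on spaces of homogeneous type.
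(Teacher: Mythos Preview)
Your argument is largely sound, but both directions differ from the paper's proof, and in the necessity direction the paper's route sidesteps precisely the obstacle you flag.

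\medskip
\textbf{Direction (a)$\Rightarrow$(b).} The paper does not work with $D_k$ directly. Instead it invokes the oscillation characterisation of $\dot F^{\beta,\infty}_{\lambda,p}$ (Theorem~\ref{prop 3.1}) and controls
\[
\frac1{m_\lambda(I)r^\beta}\int_I\big|[b,R_{\tbz}]f-([b,R_{\tbz}]f)_I\big|\,dm_\lambda
\]
by the familiar three-piece splitting $f=f\chi_{2I}+f\chi_{(2I)^c}$ and the standard Calder\'on--Zygmund estimates of Proposition~\ref{t:RieszCZ}; the outcome is a pointwise bound by $\|b\|_{\Lambda^\beta}\big(M_t(R_{\tbz}f)+M_{ts}f+Mf\big)$. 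Your route via $D_k([b,R_{\tbz}]f)=\mathrm I_k-\mathrm{II}_k$ is equally valid and arguably shorter, but it rests on the almost-orthogonality bound for the kernel of $D_kR_{\tbz}$ that you only assert. That bound is true (it is the standard estimate for $D_kT$ with $T$ a CZ operator bounded on $L^2$: near the diagonal one uses $L^2$-boundedness of $R_{\tbz}$ applied to the molecule $D_k(x,\cdot)$, far from the diagonal one uses the cancellation of $D_k$ and the kernel regularity of $R_{\tbz}$), but it deserves a reference or a short proof, since nothing in the paper supplies it.

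\medskip
\textbf{Direction (b)$\Rightarrow$(a).} Here your approach and the paper's diverge more substantially. You approximate $R_{\tbz}(z,w)$ by a constant on $I\times\widetilde I$, produce an error of relative size $O(A^{-\epsilon})$, and then try to absorb; as you say, this demands a priori finiteness of the Campanato seminorm of $b$ and forces a regularisation step. The paper avoids this entirely by a \emph{median-value plus duality} argument: it only uses the one-sided lower bound $|R_{\tbz}(x,y)|\gtrsim m_\lambda(I)^{-1}$ on $I\times\widetilde I$ (from Proposition~\ref{t:RieszCZ}(d),(f) and \cite[Proposition~5.1]{dgklwy}), splits $\widetilde I=E_1\cup E_2$ and $I=I_1\cup I_2$ according to the median $m_b(\widetilde I)$ so that $b(x)-b(y)$ has fixed sign on each $I_i\times E_i$, and then pairs $[b,R_{\tbz}](\chi_{E_i})$ against a cancellative bump $a$ supported on $3I$, equal to $1$ on $I$, for which it proves $\|a\|_{\dot F^{-\beta,1}_{\lambda,p'}}\lesssim r^\beta m_\lambda(I)^{1/p'}$. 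The duality $\big|\langle [b,R_{\tbz}]\chi_{E_i},a\rangle\big|\le\|[b,R_{\tbz}]\chi_{E_i}\|_{\dot F^{\beta,\infty}_{\lambda,p}}\|a\|_{\dot F^{-\beta,1}_{\lambda,p'}}$ then yields
\[
\frac1{r^\beta m_\lambda(I)}\int_I|b-b_I|\,dm_\lambda\ \lesssim\ \|[b,R_{\tbz}]\|_{L^p\to\dot F^{\beta,\infty}_{\lambda,p}}
\]
directly, with no error term and no finiteness hypothesis on $b$. So your ``main obstacle'' is real for your method but is simply absent from the paper's; if you want to complete your version, the regularisation via $S_kb$ you propose should work (since $\|[S_kb,R_{\tbz}]f\|_{\dot F^{\beta,\infty}_{\lambda,p}}$ can be controlled through $\|[b,R_{\tbz}]\|$ plus a remainder that vanishes as $k\to\infty$), but the duality route is cleaner and is what the paper does.
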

This resolves  {\bf Question 1} and provides a different method (bypassing the use of the Fourier transform) from the result of M. Paluszy\'nski \cite{P} in $\mathbb R^n$.

\medskip
We now turn to the second main result:  characterization of endpoint boundedness for 
$[b, \riz]$. Recall that the BMO space is defined as follows.

\begin{defn}
A function $f\in L^1_{\rm loc}(\rrp,dm_\lambda)$ belongs to
the {\it space} ${\rm BMO}_{\lambda}(\mathbb R_+)$ if
\begin{equation*}
\sup_{x,\,r\in(0,\,\fz)}\frac1{m_\lz(I(x,\,r))}\dint_{I(x,\,r)}
\lf|f(y)-\frac1{m_\lz(I(x, r))}\dint_{I(x,\,r)}f(z)\,dm_\lz(z)\r|\,dm_\lz(y)<\fz.
\end{equation*}
\end{defn}

\begin{thm}\label{thm bmo4.1}
Let $\lambda>0$. Assume that $b\in {\rm BMO}_{\lambda}(\mathbb R_+)$. Then, the following assertions are equivalent:
\begin{itemize}
\item[{\rm (a)}] $[b, \riz]$ is bounded from $L^\infty_c(\mathbb R_+, dm_\lambda)$ to ${\rm BMO}_{\lambda}(\mathbb R_+)$;
\item[{\rm (b)}] $\sup\limits_{I=(x_0-r_0,x_0+r_0) \atop 0<4r_0<x_0} \log\big(\frac{x_0}{r_0}\big)\frac1{m_\lambda(I)}
                               \int_I |b(z)-b_I|dm_\lambda(z)<\infty.$
\end{itemize}
\end{thm}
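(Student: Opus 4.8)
The plan is to prove the two implications (b)$\Rightarrow$(a) and (a)$\Rightarrow$(b) separately, exploiting the explicit kernel structure of the Bessel Riesz transform $\riz$ and the logarithmic growth that distinguishes the Bessel setting from the Euclidean one. For the direction (b)$\Rightarrow$(a), I would fix a ball $I_0 = I(x_0, r_0)$ and aim to bound the mean oscillation of $[b,\riz]f$ over $I_0$ for $f \in L^\infty_c$ with $\|f\|_\infty \le 1$. The standard decomposition is to write $b = (b-b_{I_0})$ and split $f = f\mathbbm 1_{2I_0} + f\mathbbm 1_{(2I_0)^c}$, so that
\[
[b,\riz]f = (b-b_{I_0})\riz f - \riz\big((b-b_{I_0})f\mathbbm 1_{2I_0}\big) - \riz\big((b-b_{I_0})f\mathbbm 1_{(2I_0)^c}\big).
\]
The first two terms are handled by the $L^2(dm_\lambda)$ (or $L^p$) boundedness of $\riz$ together with a John--Nirenberg argument for $b \in {\rm BMO}_\lambda$; these contribute an $O(1)$ bound on the oscillation and do not see the logarithmic condition. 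The crucial term is the third one: using the kernel estimates for $\riz$ in the Bessel setting (size and smoothness estimates for the Riesz kernel, available from \cite{dgklwy,DLWY}), one estimates the oscillation of $\riz\big((b-b_{I_0})f\mathbbm 1_{(2I_0)^c}\big)$ over $I_0$, and this is where the quantity $\log(x_0/r_0)\,\frac{1}{m_\lambda(I)}\int_I|b-b_I|\,dm_\lambda$ naturally enters — the logarithm arises from summing the kernel contributions of the annuli $2^{j+1}I_0 \setminus 2^j I_0$ between scale $r_0$ and scale $x_0$, because near the origin the measure $m_\lambda$ behaves differently and the Riesz kernel has a non-integrable tail that is only controlled after $x_0/r_0$ dyadic steps.

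For the converse (a)$\Rightarrow$(b), the idea is to test the boundedness of $[b,\riz]$ from $L^\infty_c$ to ${\rm BMO}_\lambda$ against a well-chosen family of inputs. Given $I = (x_0 - r_0, x_0 + r_0)$ with $0 < 4r_0 < x_0$, I would take $f = \mathbbm 1_{I}$ (or a suitably normalized bump on $I$), compute $[b,\riz]\mathbbm 1_I$, and extract a lower bound for its ${\rm BMO}_\lambda$ norm in terms of $\log(x_0/r_0)\,\frac{1}{m_\lambda(I)}\int_I|b-b_I|\,dm_\lambda$. Concretely, for $z \in I$ one has $\riz\mathbbm 1_I(z) = \int_I R_{\Delta_\lambda}(z,y)\,dm_\lambda(y)$, and because $z,y$ are both of order $x_0$ while $|z-y| \lesssim r_0$, the kernel is large and essentially of constant sign on $I$, so $[b,\riz]\mathbbm 1_I(z) \approx (b(z) - \text{something})\cdot(\text{large factor of size} \sim \log(x_0/r_0))$; averaging $|[b,\riz]\mathbbm 1_I - ([b,\riz]\mathbbm 1_I)_I|$ over $I$ then reproduces the left side of (b) up to constants. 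The main obstacle here is making the "large factor" precise: one needs a genuine lower estimate on $\riz\mathbbm 1_I$ on $I$, and one must verify that the interaction term $(b-b_I)\riz\mathbbm 1_I$ dominates the term $\riz((b-b_I)\mathbbm 1_I)$ in mean oscillation, which requires careful use of the kernel's positivity/monotonicity on the relevant region.

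The step I expect to be the main obstacle is the sharp two-sided control of the log factor — i.e. showing both that the annular kernel sums in (b)$\Rightarrow$(a) are bounded by $C\log(x_0/r_0)$ and \emph{not} better, and that the lower bound in (a)$\Rightarrow$(b) produces exactly this same $\log(x_0/r_0)$ weight rather than a smaller power. This is precisely the point noted in the introduction where "a simple duplication of the classical result on $\mathbb R^n$ is not the suitable one": in the Euclidean case the Riesz kernel tail is absolutely summable over all far annuli and no logarithm appears, whereas here the measure $m_\lambda(2^jI)\sim 2^{j\mathbf Q}m_\lambda(I)$ only up to $j\sim\log(x_0/r_0)$ (after which $2^jI$ reaches the origin and the doubling behaves like Lebesgue measure), and it is exactly this transition that manufactures the logarithm. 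Controlling the kernel on the borderline annulus near the origin, and checking that the BMO hypothesis on $b$ (needed so the first two terms in the decomposition are finite at all) is genuinely necessary, will be the technical heart of the argument.
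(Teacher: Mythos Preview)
Your decomposition in (b)$\Rightarrow$(a) is correct, but your attribution of the logarithm to the wrong term is a genuine gap. The oscillation over $I_0$ of the third piece $\riz\big((b-b_{I_0})f\mathbbm 1_{(2I_0)^c}\big)$ is in fact $O(\|f\|_\infty\|b\|_{{\rm BMO}_\lambda})$ with \emph{no} logarithmic loss: for $x,x'\in I_0$ the kernel smoothness (Proposition~\ref{t:RieszCZ}(c)) gives the usual convergent sum $\sum_j j\,2^{-j}$ over annuli, exactly as in the Euclidean case. The logarithm actually lives in your \emph{first} term $(b-b_{I_0})\riz f$, more precisely in $(b-b_{I_0})\riz(f\mathbbm 1_{(2I_0)^c})$. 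The point is that $\riz(f\mathbbm 1_{(2I_0)^c})$ is essentially constant on $I_0$ (again by kernel smoothness), so the oscillation of this product is $\sim \big(\frac{1}{m_\lambda(I_0)}\int_{I_0}|b-b_{I_0}|\,dm_\lambda\big)\cdot|\riz(f\mathbbm 1_{(2I_0)^c})(u)|$ for any $u\in I_0$, and it is this second factor that can be as large as $\log(x_0/r_0)$. The paper isolates precisely this piece (called $\sigma_4$ there) and shows the log comes from the integral $\int_{x_0+4r_0}^{K_0x_0}\riz(u,z)\,dm_\lambda(z)\sim\int_{x_0+4r_0}^{K_0x_0}\frac{dz}{z-x_0}$, i.e.\ from the \emph{local Hilbert-transform behaviour} of the kernel on the region $K_0^{-1}u\le z\le K_0u$, not from an annular sum reaching the origin.

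Your test function in (a)$\Rightarrow$(b) does not work. For $z\in I$ the integral $\riz\mathbbm 1_I(z)=\mathrm{p.v.}\int_I\riz(z,y)\,dm_\lambda(y)$ is \emph{not} of size $\log(x_0/r_0)$: the kernel changes sign across the diagonal (compare Proposition~\ref{t:RieszCZ}(d) and (e)), so the principal value integral over $I$ has cancellation and is $O(1)$ on average, just like the Hilbert transform of an interval. To extract the log lower bound you must test against an $f$ supported \emph{away} from $I$ but still in the local region where the kernel has a definite sign; the paper takes $f=\chi_{(x_0+4r_0,\,K_0x_0)}$, so that for $u\in I$ one has $\riz f(u)\gtrsim\int_{x_0+4r_0}^{K_0x_0}\frac{dz}{z-x_0}\gtrsim\log(x_0/r_0)$ by Proposition~\ref{t:RieszCZ}(d). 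Measuring the BMO oscillation of $[b,\riz]f$ on $I$ then yields the desired lower bound. Without this choice of test function your argument for the necessity of (b) cannot close.
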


\begin{defn}[\cite{bdt}]\label{d-atomic H1}
 A function $a$ is called a $(1, 2)_{\tbz }$-atom
if there exists an open bounded interval $I\subset \mathbb{R}_+$
such that $\supp(a)\subset I$, $\|a\|_{L^2(\mathbb R_+,dm_\lambda)}\le[m_\lz(I)]^{-1/2}$
and $\int_0^\fz a(x)\,\dmz(x)=0.$
\end{defn}

\begin{thm}\label{thm bmo4.2}
Let $\lambda>0$. Assume that $b\in {\rm BMO}_{\lambda}(\mathbb R_+)$. Then, $[b, \riz]$ can be extended from 
span$\{(1, 2)_{\tbz }$-atoms$\}$ to $H^1(\mathbb R_+, dm_\lambda)$ as a bounded operator from $H^1(\mathbb R_+, m_\lambda)$ into 
$L^1(\mathbb R_+, m_\lambda)$ if, and only if, $b$ is constant.
\end{thm}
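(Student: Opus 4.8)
The "if" direction is trivial: if $b$ is constant then $[b,\riz]=0$, which is certainly bounded from $H^1$ to $L^1$. So the content is the "only if" direction, and the natural strategy is to contrapose: assuming $b$ is non-constant (but in ${\rm BMO}_\lambda$), I would exhibit a sequence of $(1,2)_{\tbz}$-atoms $a_n$ with $\|a_n\|_{H^1}\lesssim 1$ but $\|[b,\riz]a_n\|_{L^1}\to\infty$. The mechanism I expect to exploit is the logarithmic blow-up already visible in Theorem \ref{thm bmo4.1}(b): the Bessel Riesz kernel, unlike the Euclidean one, has a genuinely different behavior near the origin, and this forces an extra $\log(x_0/r_0)$ factor. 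Concretely, I would take intervals $I_n=(x_0^{(n)}-r_0^{(n)},x_0^{(n)}+r_0^{(n)})$ with $r_0^{(n)}/x_0^{(n)}\to 0$ (say $x_0^{(n)}\to\infty$ with $r_0^{(n)}$ fixed, or $r_0^{(n)}\to0$ with $x_0^{(n)}$ fixed), pick an atom $a_n$ supported in $I_n$ adapted to the oscillation of $b$ on $I_n$, and show that $\|[b,\riz]a_n\|_{L^1}\gtrsim \log(x_0^{(n)}/r_0^{(n)})\cdot\frac1{m_\lambda(I_n)}\int_{I_n}|b-b_{I_n}|\,dm_\lambda$, up to a uniformly bounded error. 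If the right-hand side stays bounded below along the sequence while the log factor diverges, we are done; the only way to prevent this is $\frac1{m_\lambda(I)}\int_I|b-b_I|\,dm_\lambda\to0$ fast enough on \emph{every} such shrinking family, and a short argument (using a Lebesgue-point / Taylor-type normalization of $b$, or testing against all translates and dilates) shows that this forces $b$ to be constant.

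The key computation is a lower bound for $\riz a_n$ far from $I_n$. Writing $R_{\Delta_\lambda}f(x)=\int_0^\infty R_{\Delta_\lambda}(x,y)f(y)\,dm_\lambda(y)$ and using the cancellation $\int a_n\,dm_\lambda=0$, one gets $\riz a_n(x)=\int_{I_n}\big(R_{\Delta_\lambda}(x,y)-R_{\Delta_\lambda}(x,x_0^{(n)})\big)a_n(y)\,dm_\lambda(y)$, so the size and sign of $R_{\Delta_\lambda}(x,y)$ on the region $x\gg x_0^{(n)}$ (or $x\ll x_0^{(n)}$) is what matters. I would use the known explicit/asymptotic form of the Bessel Riesz kernel (available in \cite{dgklwy,DLWY}) to identify a region — heuristically an interval of multiplicative length comparable to $x_0^{(n)}/r_0^{(n)}$ — on which $R_{\Delta_\lambda}(x,\cdot)$ is essentially of one sign and of size $\sim 1/m_\lambda(I(x,x))$, producing, after integrating $|[b,\riz]a_n|$ over that region, the factor $\log(x_0^{(n)}/r_0^{(n)})$ times the mean oscillation of $b$. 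This is the same phenomenon underlying Theorem \ref{thm bmo4.1}: the commutator kernel $(b(x)-b(y))R_{\Delta_\lambda}(x,y)$ fails the "$H^1\to L^1$" (i.e.\ weak Calder\'on--Zygmund) smoothness exactly to the tune of a logarithm, and a single atom already sees this defect.

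Thus the skeleton is: (i) reduce to showing non-constant $b$ implies unboundedness; (ii) fix the atom $a_n=c_n\,m_\lambda(I_n)^{-1}\big(b-b_{I_n}\big)\mathbf 1_{I_n}$ (normalized so $\|a_n\|_{L^2}\le m_\lambda(I_n)^{-1/2}$ — here one may first reduce to $b$ real and bounded by a standard truncation, or pass to a sub-interval where the oscillation is controlled), noting it has mean zero; (iii) plug into $[b,\riz]a_n=b\cdot\riz a_n-\riz(b a_n)$, and on the far region estimate $\riz(ba_n)$ and the subtracted term via the kernel analysis above; (iv) collect the bound $\|[b,\riz]a_n\|_{L^1}\gtrsim \log(x_0^{(n)}/r_0^{(n)})\,\mathrm{osc}_{I_n}(b)-C$; (v) choose the family $\{I_n\}$ to make the log diverge, and argue that boundedness then forces $\mathrm{osc}_{I}(b)=0$ for a family of intervals rich enough to conclude $b$ is constant a.e. The main obstacle I anticipate is step (iii)/(v) done uniformly: one must choose the intervals $I_n$ and the far region together so that the kernel is genuinely of one sign there (the Bessel kernel changes sign, so the "good" region must be pinned down precisely from its asymptotics), and one must handle the case analysis $x_0\to\infty$ versus $r_0\to0$ since the kernel's near-origin vs.\ near-diagonal behavior differ — this is where the Bessel structure, as the authors emphasize, departs from the Euclidean template and the estimate cannot simply be lifted from \cite{Pe}.
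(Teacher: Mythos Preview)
Your proposal has a genuine gap: the lower bound you aim for, $\|[b,\riz]a_n\|_{L^1}\gtrsim \log(x_0^{(n)}/r_0^{(n)})\cdot\mathrm{osc}_{I_n}(b)-C$, is too weak to force $b$ constant. Indeed, the condition $\sup_{I}\log(x_0/r_0)\cdot\mathrm{osc}_I(b)<\infty$ is \emph{exactly} condition~(b) of Theorem~\ref{thm bmo4.1}, and that theorem shows it is equivalent to boundedness of $[b,\riz]:L^\infty_c\to{\rm BMO}_\lambda$, a class that contains many non-constant $b$ (for instance $b(x)=\log x$ satisfies it). So your step~(v), ``a short argument shows this forces $b$ constant'', is false as stated, and no amount of choosing the family $\{I_n\}$ cleverly will fix it if the only input is the logarithmic bound.

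The mechanism the paper actually uses is different and stronger. Following \cite{HST}, one decomposes $[b,\riz]a=T_1+T_2+T_3+T_4$ where $T_1,T_2,T_3$ are uniformly in $L^1$ (bounded by $\|b\|_{\rm BMO}$) and
\[
T_4(a)(x,x_0)=\chi_{\mathbb R_+\setminus(4I)}(x)\,\riz(x,x_0)\int_I(b(y)-b_I)a(y)\,dm_\lambda(y).
\]
The key is not a near-diagonal logarithm but the \emph{tail} non-integrability of the kernel: by Proposition~\ref{t:RieszCZ}(g), $|\riz(x,x_0)|\gtrsim x^{-2\lambda-1}$ for $x>K_1x_0$, whence
\[
\int_{K_1x_0}^\infty |\riz(x,x_0)|\,x^{2\lambda}\,dx\gtrsim\int_{K_1x_0}^\infty\frac{dx}{x}=+\infty.
\]
Thus for a \emph{single} atom $a$ with $\int ba\,dm_\lambda\neq0$ one already has $\|T_4(a)\|_{L^1}=+\infty$, so uniform boundedness on atoms (equivalently, by \cite{YZ}, $H^1\to L^1$ boundedness) forces $\int ba\,dm_\lambda=0$ for every atom; duality ${\rm BMO}_\lambda=(H^1)'$ then gives $b$ constant. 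No sequence of atoms or limiting argument is needed. Note also that your intuition about where the Bessel setting departs from the Euclidean one is inverted: this $H^1\to L^1$ argument is essentially the P\'erez/Harboure--Segovia--Torrea template (the tail divergence $\int_{|x|>R}|x|^{-n}\,dx=\infty$ holds in $\mathbb R^n$ too); it is Theorem~\ref{thm bmo4.1}, the $L^\infty\to{\rm BMO}$ endpoint, that is genuinely different in the Bessel setting.
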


Theorems \ref{thm bmo4.1} and \ref{thm bmo4.2} give an affirmative answer to {\bf Question 2}. Moreover, it is worth pointing out that comparing the result to the classical result on $\mathbb R^n$ \cite{Pe} (the commutator of the Riesz transforms are bounded from $L^\infty$ to BMO if and only if $b$ is a constant),  the condition (b) in Theorem \ref{thm bmo4.1} is new and reveals  the intrinsic structure of the Bessel setting. 

%\color{blue}I don't think you have defined $(1,2)$ atoms.  You define $(1,\infty)$ atoms below, but not $(1,2)$ atoms.  These need to be defined, and they probably should be defined/ discussed in this general section so the statement of the main result makes sense.\color{black}

%{\color{red} NEED TO THINK!\\[10pt]

We now turn to our third main result.
 Recall the fractional operator $\tbz ^{-\alpha/2}$ with $0<\alpha<{\bf Q}$, defined by
$$ \tbz ^{-\alpha/2}f(x) = {1\over \Gamma(\alpha/2)} \int_0^\infty { e^{t\tbz }}(f)(x) {dt\over t^{1-\alpha/2}}. $$

We investigate the    commutator $[b,\tbz ^{-\alpha/2}](f)(x):= b(x)\tbz ^{-\alpha/2}(f)(x) - \tbz ^{-\alpha/2}(bf)(x) $ as follows.
    
%We also  establish the following characterization of the Lipschitz space $\Lambda^\beta$ via the commutator $[b,\tbz ^{-\alpha/2}](f)(x):= b(x)\tbz ^{-\alpha/2}(f)(x) - \tbz ^{-\alpha/2}(bf)(x) $.

%\color{blue}I don't see that the Lipschitz space has been defined here.  \color{black}

\begin{thm}\label{thm 1.7}
Let $1<p<q<\infty, {0<\alpha<1}$, and $0<\beta<1$. %The following conditions are equivalent:
%\begin{itemize}
%\item[\rm (a)] 
If $b\in \binfty$ then 
%\item[\rm (b)] 
$[b,\tbz ^{-\alpha/2}]$ is  bounded from $\lpz$ to $\dot F_{\lambda ,q}^{\beta,\infty}(\mathbb R_+)$ for  $\frac1p-\frac1q=\frac{\alpha}{\bf Q}$.
%\item[(c)]  $[b,\tbz ^{-\alpha/2}]$ is  bounded from $\lpz$ to $\lqz$ for $\frac1p-\frac1q=\frac{\alpha+\beta}{\bf Q}$ if $\frac1p>\frac{\alpha+\beta}{\bf Q}$.
%\end{itemize}
%
\end{thm}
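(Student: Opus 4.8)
The plan is to reduce the Triebel--Lizorkin endpoint estimate for $[b,\tbz^{-\alpha/2}]$ to two ingredients: first, a pointwise or testing-type bound showing that $D_k\big([b,\tbz^{-\alpha/2}]f\big)(x)$ decomposes into a piece controlled by $\|b\|_{\binfty}$ times a fractional-integral-type average of $|f|$, and second, the mapping property of that fractional average from $\lpz$ into $L^q(\mathbb R_+,dm_\lambda)$ when $\frac1p-\frac1q=\frac{\alpha}{\mathbf Q}$, which is exactly the Bessel analogue of the Hardy--Littlewood--Sobolev inequality for $\tbz^{-\alpha/2}$ (this is classical in the Bessel setting, going back to Muckenhoupt--Stein \cite{ms}, and I would invoke it). Concretely, write, for fixed $k$ and $x$,
\begin{align*}
D_k\big([b,\tbz^{-\alpha/2}]f\big)(x)
=\int_{\mathbb R_+} D_k(x,y)\,\big(b(x)-b(y)\big)\,\tbz^{-\alpha/2}f(y)\,dm_\lambda(y)\\
\ \ +\int_{\mathbb R_+} D_k(x,y)\,\tbz^{-\alpha/2}\big((b(\cdot)-b(y))f\big)(y)\,dm_\lambda(y),
\end{align*}
where $D_k(x,y)$ is the kernel of $D_k$; after symmetrizing the commutator inside, one is left with terms in which a difference $b(u)-b(v)$ appears against the kernel of $D_k$ composed with $\tbz^{-\alpha/2}$. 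The crucial use of $b\in\binfty$ is that $|D_j b(u)|\le 2^{-j\beta}\|b\|_{\binfty}$ for all $j$; telescoping $b=\sum_j D_j b$ (modulo the ``polynomial'' part, which is a constant here and annihilated by the commutator) gives $|b(u)-b(v)|\lesssim \|b\|_{\binfty}\,|u-v|^{\beta}$ up to logarithmic/tail corrections controlled by the regularity and decay of the $D_j$ kernels --- in other words $\binfty$ embeds into a Hölder-type space adapted to $(\mathbb R_+,|\cdot|,m_\lambda)$.

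The key steps, in order, would be: (1) establish the embedding $\binfty\hookrightarrow$ (Bessel Hölder space of order $\beta$), i.e. $|b(u)-b(v)|\lesssim\|b\|_{\binfty}|u-v|^\beta$, using the almost-orthogonality/size-and-regularity estimates for $\{D_j\}$ from Section 2 together with the discrete Calderón reproducing formula; (2) prove the kernel bound
\[
\big|D_k\circ[\,b,\tbz^{-\alpha/2}]\,(x,y)\big|\ \lesssim\ \|b\|_{\binfty}\ 2^{-k\beta}\ \frac{1}{m_\lambda\big(I(x,|x-y|)\big)}\,\Big(\frac{|x-y|}{|x-y|}\Big)\cdots,
\]
more precisely, after inserting the Hölder bound for $b$ and using $0<\alpha<1$, $0<\beta<1$ with $\alpha+\beta<2$, that $2^{k\beta}\big|D_k[b,\tbz^{-\alpha/2}](x,y)\big|$ is dominated by $\|b\|_{\binfty}$ times the kernel of a fractional integral operator of order $\alpha$ on the space of homogeneous type $(\mathbb R_+,|\cdot|,m_\lambda)$ — that is, by $\|b\|_{\binfty}\,|x-y|^{\alpha}/m_\lambda(I(x,|x-y|))$ (uniformly in $k$); (3) take the supremum over $k$ inside, giving the pointwise bound $\sup_k 2^{k\beta}|D_k[b,\tbz^{-\alpha/2}]f(x)|\lesssim\|b\|_{\binfty}\,\mathcal I_\alpha(|f|)(x)$ where $\mathcal I_\alpha$ is the fractional integral of order $\alpha$; (4) apply the Hardy--Littlewood--Sobolev estimate $\|\mathcal I_\alpha g\|_{L^q}\lesssim\|g\|_{L^p}$ for $\frac1p-\frac1q=\frac{\alpha}{\mathbf Q}$ and invoke the $q=\infty$ characterization (Theorem \ref{prop 3.1}) to conclude $\|[b,\tbz^{-\alpha/2}]f\|_{\dot F_{\lambda,q}^{\beta,\infty}}\lesssim\|b\|_{\binfty}\|f\|_{\lpz}$.

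The main obstacle I expect is step (2): getting the composed kernel $D_k\circ[b,\tbz^{-\alpha/2}]$ to behave like a single fractional integral kernel with the clean gain $2^{-k\beta}$, uniformly in $k$. One has to split according to whether $2^{-k}$ is smaller or larger than $|x-y|$ and carefully combine (i) the cancellation of $D_k$ (it kills constants, and has a moment/smoothness of order $\gamma_0$), (ii) the smoothing and decay of $e^{t\tbz}$ and hence of $\tbz^{-\alpha/2}$ — in the Bessel setting the heat kernel bounds and their $t$-derivatives are known but must be used with care near the boundary $x\to 0^+$, where the doubling exponents $2$ and $2\lambda+1$ in \eqref{doub} differ — and (iii) the Hölder bound on $b$ from step (1). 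The interplay of the two ``scales'' $2^{-k}$ (from $D_k$) and the $\alpha$-fractional scale must be balanced so that the $\beta$-regularity is entirely absorbed into the $2^{-k\beta}$ factor while the $\alpha$-order is left intact; this is where the restriction $\alpha+\beta<2$ and $1<p<q<\infty$ is used, and where the Bessel-specific kernel estimates (as opposed to a Fourier-multiplier argument as in \cite{P}) do the real work. A secondary technical point is justifying the computations on a dense class (e.g. nice compactly supported $f$, or $(1,2)_{\tbz}$-atoms) and then extending by density, and checking that the ``polynomial part'' in the Calderón reproducing formula for $b\in\binfty$ is genuinely a constant so that it drops out of the commutator.
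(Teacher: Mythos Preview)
Your route is genuinely different from the paper's. The paper does not bound the composed kernel $D_k\circ[b,\tbz^{-\alpha/2}]$ at all; instead it passes to the mean-oscillation characterization of $\dot F^{\beta,\infty}_{\lambda,q}$ (Theorem~\ref{prop 3.1}), fixes $I=I(x,2^{-k})$, splits $f=f\chi_{2I}+f\chi_{(2I)^c}$, writes $[b,\tbz^{-\alpha/2}]=[b-b_I,\tbz^{-\alpha/2}]$, and controls three resulting pieces by $M_t(\tbz^{-\alpha/2}f)$, $M_s f$, and a dyadic-annulus sum using the smoothness estimate \eqref{frac smooth}. To pass the last two pieces from the $L^q$-norm of the target to the $L^p$-norm of the source, the paper proves a separate maximal-function comparison (Lemma~\ref{pq lemma}); and the annulus sum converges only for $\alpha+\beta<1$, a restriction removed afterward via the semigroup identity $\tbz^{-\alpha/2}\tbz^{-\gamma/2}=\tbz^{-(\alpha+\gamma)/2}$ and a bootstrap from \cite{P}. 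Your plan --- H\"older embedding for $b$, then a pointwise bound $\sup_k 2^{k\beta}|D_k[b,\tbz^{-\alpha/2}]f|\lesssim\|b\|_{\Lambda^\beta}\mathcal I_\alpha(|f|)$, then Hardy--Littlewood--Sobolev --- is more direct and, if the kernel bound in your step~(2) goes through, would bypass both Lemma~\ref{pq lemma} and the $\alpha+\beta<1$ detour: combining the H\"older gain $|x-y|^\beta$ with the cancellation gain $2^{-k}/|x-y|$ from $D_k$ yields $2^{-k\beta}(2^{-k}/|x-y|)^{1-\beta}$ without ever generating a series in $2^{j(\alpha+\beta-1)}$. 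Two corrections, however. First, your displayed identity for $D_k([b,\tbz^{-\alpha/2}]f)$ is wrong as written: the second integrand equals $-[b,\tbz^{-\alpha/2}]f(y)$, so the formula collapses; the decomposition you actually want is to write $b(z)-b(y)=(b(z)-b(x))+(b(x)-b(y))$ inside the iterated kernel $\int D_k(x,z)(b(z)-b(y))K^\alpha(z,y)\,dm_\lambda(z)$. Second, in the regime $|x-y|\lesssim 2^{-k}$ the cancellation of $D_k$ is useless and you must instead integrate the singularity of $K^\alpha(\cdot,y)$ over the ball; this is where $0<\alpha<1$ (local integrability) and $1+\beta-\alpha>0$ are used, and you should make that explicit.
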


We point out that the reverse of this result is still open.
Moreover, the characterization the $L^p-L^q$ boundedness of $[b,\tbz ^{-\alpha/2}]$ is still open, which may or may not correspond to the existing Bessel BMO space.

%Moreover, we have the following characterization of BMO, i.e., the replacement of  $\Lambda^0$, via the fractional integral, which was missing in the literature of Bessel setting. Note that 
%$\dot F_{\tbz ,q}^{0,\infty}(\mathbb R_+,dm_\lambda)\sim L^q(\mathbb R_+,dm_\lambda)$. From Theorem \ref{thm 1.7},
%we immediately have the following characterrization, which was missing in the literature of Bessel setting.
%\begin{thm}
%Suppose that $b\in L^1_{loc}(\mathbb R_+,dm_\lambda)$, ${0<\alpha<1}$, and that $[b,\tbz ^{-\alpha/2}]$ is  bounded from $\lpz$ to $\lqz$ with $1<p<{{\bf Q}\over \alpha}$ and ${1\over q} = {1\over p} - {\alpha\over {\bf Q}}$. Then $b\in {\rm BMO}_{\lamnda}(\mathbb R_+)$ with 
%\begin{align}\label{commutator lower}
%\|b\|_{{\rm BMO}(\mathbb R_+,dm_\lambda)}\sim  \|[b,\tbz ^{-\alpha/2}]:\lpz\to \lqz\|.
%\end{align}
%\end{thm}
%\color{blue}Theorem 1.9 suggests a characterization of BMO and seems to have a mixture of the necessary and sufficient statements.  It should probably be clarified some.\color{black}
%

This paper is organised as follows. In Section 2, we provide the necessary preliminaries in the Bessel setting. In Section 3 we provide a characterization of the Besov space in the Bessel setting and provide the proof of Theorem \ref{thm0}. In Section 4 we give the proof of Theorems \ref{thm bmo4.1} and \ref{thm bmo4.2}.  In the last section we prove Theorem \ref{thm 1.7}.

%Throughout the paper, we denote by $C$ and $\widetilde{C}$ {\it positive constants} which are independent of the main parameters, but they may vary from line to line. For every $p\in(1, \fz)$, we denote by $p'$ the conjugate of $p$, i.e., $\frac{1}{p'}+\frac{1}{p}=1$.  If $f\le Cg$, we then write $f\ls g$ or $g\gs f$; and if $f \ls g\ls f$, we  write $f\sim g.$ For any $k\in \mathbb{R}_+$ and $I:= I(x, r)$ for some $x$, $r\in (0, \fz)$, $kI:=I(x, kr)$.

\section{Preliminaries}

\subsection{Approximation to the Identity}

We first recall the definition of approximation to the identity.
\begin{defn}\label{def-ati}
We say that a family of operators $\{S_k\}_{k\in\mathbb Z}$ on $\mathbb R_+$ is an approximation to the identity if $\displaystyle\lim_{k\to+\infty} S_k=Id$, $\displaystyle\lim_{k\to -\infty} S_k=0$
and moreover, the kernel $S_k(x,y)$ of $S_k$ satisfies the following condition:  for $\beta,\gamma\in (0,1]$,
\begin{itemize}
\item[${\rm (A_i)}$] for any $x,\,y \in\mathbb R_+$ and $k \in\mathbb Z$,
\begin{equation*}%\label{size of psi}
|S_k(x, y)|\ls \frac1{m_\lz(I(x, 2^{-k}))+m_\lz(I(y,2^{-k}))+m_\lz(I(x, |x-y|))}\bigg(\frac {2^{-k}}{|x-y|+2^{-k}}\bigg)^\gamma;
\end{equation*}
  \item [${\rm (A_{ii})}$] for any $x,y,\wz y\in\mathbb R_+$  and $k \in\mathbb Z$ with $|y-\wz y|\le (2^{-k}+|x-y|)/2$,
\begin{align*}%\label{regular of psi}
&|S_k(x, y)-S_k(x,\wz y)|+ |S_k(y,x)-S_k(\wz y,x)|\\
&\ls \frac1{m_\lz(I(x, 2^{-k}))+m_\lz(I(y,2^{-k}))+m_\lz(I(x, |x-y|))}\bigg(\frac {|y-\wz y|}{|x-y|+2^{-k}}\bigg)^{\beta}\bigg(\frac {2^{-k}}{|x-y|+2^{-k}}\bigg)^\gamma;
\end{align*}
  \item [${\rm (A_{iii})}$]for any $x,\,y,\,\wz x,\,\wz y\in\mathbb R_+$  and $k\in\mathbb Z$ with $|x-\wz x|$, $|y-\wz y|\le (2^{-k}+|x-y|)/3$,
 \begin{align*}
&|S_k(x, y)-S_k(x,\wz y)|+|S_k(\wz x,y)-S_k(\wz x,\wz y)|\\
&\ls \frac1{m_\lz(I(x, 2^{-k}))+m_\lz(I(y,2^{-k}))+m_\lz(I(x, |x-y|))}\\
&\quad\times \bigg(\frac {|x-\wz x|}{|x-y|+2^{-k}}\bigg)^{\beta}\bigg(\frac {|y-\wz y|}{|x-y|+2^{-k}}\bigg)^{\beta}\bigg(\frac {2^{-k}}{|x-y|+2^{-k}}\bigg)^\gamma;
\end{align*}
  \item [${\rm (A_{iv})}$] for any $x\in\mathbb R_+$  and $k \in\mathbb Z$,
\begin{equation*}%\label{cancellation of psi}
\inzf S_k(x, y)\dmzy=\inzf S_k(y,x)\dmzy=1.
\end{equation*}
\end{itemize}
\end{defn}
One of the constructions of an approximation to the identity is due to
Coifman, see \cite{DJS}. We set $D_k:=S_k-S_{k+1}$. And it is obvious that the kernel $D_k(x,y)$
of $D_k$ satisfies ${\rm (A_i)}$, ${\rm (A_{ii}) }$ and ${\rm (A_{iii})}$ with $S_k(x,y)$ replaced by $D_k(x,y)$, and
\begin{equation*}%\label{cancellation of psi}
\inzf D_k(x, y)\dmzy=\inzf D_k(y,x)\dmzy=0
\end{equation*}
for any $x\in\mathbb R_+$  and $k \in\mathbb Z$.

\subsection{Test Function Space and Distributions}
We now
recall  the test function spaces and distribution spaces defined by Han, M\"uller and Yang \cite{hmy06,hmy08}.

\begin{defn}[\cite{hmy06}]\label{def-of-test-func-space}
Consider the space $(\mathbb R_+,|\cdot|,\dmz)$. Let $0<\gamma, \beta\leq 1$ and $r>0.$
A function $f$ defined on $\mathbb R_+$ is said to be a test function of type
$(x_0,r,\beta,\gamma)$ centered at $x_0\in \mathbb R_+$ if $f$ satisfies the
following conditions:
\begin{itemize}
\item[\rm(i)]
$|f(x)|\leq C \frac{\displaystyle 1}{\displaystyle
m_\lz(I(x_0,r))+m_\lz(I(x_0,|x_0-x|))} \Big(\frac{\displaystyle r}{\displaystyle
r+|x-x_0|}\Big)^{\gamma}$;
\item[\rm(ii)]
$|f(x)-f(y)|\leq C \Big(\frac{\displaystyle
|x-y|}{\displaystyle r+|x-x_0|}\Big)^{\beta} \frac{\displaystyle 1}{\displaystyle
m_\lz(I(x_0,r))+m_\lz(I(x_0,|x_0-x|))} \Big(\frac{\displaystyle r}{\displaystyle
r+|x-x_0|}\Big)^{\gamma}$ for all $x,y\in \mathbb R_+$ with
$|x-y|\le{\frac{1}{2}}(r+|x-x_0|).$
\end{itemize}
%tHere $V(x,x_0):=m_\lz(I(x, |x-x_0|))$.
If $f$ is a test function of type $(x_0,r,\beta,\gamma)$, we write
$f\in \cg(x_0,r,\beta,\gamma)$ and the norm of $f\in
\cg(x_0,r,\beta,\gamma)$ is defined by
$\|f\|_{\cg(x_0,\,r,\,\beta,\,\gamma)}:=\inf\{C>0: {\rm (i)\ and\ (ii)\ hold}\}.$
\end{defn}

Now for any fixed $x_0\in \mathbb R_+$, we denote
$\cg(\beta,\gamma):=\cg(x_0,1,\beta,\gamma)$ and by $\cg_0(\beta,\gamma)$ the collection of all test functions in $\cg(\beta,\gamma)$ with $\int_{\mathbb R_+} f(x) \dmz(x)=0.$ Note that
$\cg(x_1,r,\beta,\gamma)=\cg(\beta,\gamma)$ with equivalent norms for all
$x_1\in \mathbb R_+$ and $r>0$ and that
$\cg(\beta,\gamma)$ is a Banach space with respect to the norm in
$\cg(\beta,\gamma)$.

Let $\GG(\beta,\gamma)$ be the
completion of the space $\cg_0(1,1)$ in
the norm of $\cg(\beta,\gamma)$ when $0<\beta,\gamma<1$. If $f\in \GG(\beta,\gamma)$, we then define
$\|f\|_{\GG(\beta,\gamma)}:=\|f\|_{\cg(\beta,\gamma)}$. $(\GG(\beta,\gamma))'$, the distribution space, is defined to be the set of all
linear functionals $L$ from $\GG(\beta,\gamma)$ to $\mathbb{C}$ with
the property that there exists $C\geq0$ such that for all $f\in
\GG(\beta,\gamma)$,
$|L(f)|\leq C\|f\|_{\GG(\beta,\gamma)}.$

\subsection{Poisson and Conjugate Poisson Kernels, and the Riesz Transform in the Bessel Setting}\label{s2}
%\label{s-Bessel poisson kernel}
%\section{Hardy associated with $\tbz $, and Bessel Riesz Transforms }

%Recall that 

%for $f\in\loz$ we have $\plz(f):= e^{-t\sqrt\tbz}f=p^{[\lz]}_t\sharp_\lz f$
%and
%$W^{[\lz]}_t(f):= e^{-t\tbz}f= w^{[\lz]}_{\sqrt{2t}}\sharp_\lz f$,
%where 
%\begin{align*}
%p^{[\lz]}(x) &:=\frac{2\lz\bgz(\lz)}{\bgz(\lz+1/2)\sqrt{\pi}}
%\frac1{(1+x^2)^{\lz+1}}
%\end{align*}
%and
%\begin{align*}
%w^{[\lz]}(x) & :={2^{(1-2\lz)/2}\over\bgz(\lz+1/2)}
%\exp\lf(-x^2/2\r).
%\end{align*}
%For $f$ and $\varphi\in \loz$, their {Hankel convolution} is defined by
%setting, for all $x,\,t\in (0,\fz)$,
%\begin{equation*}
%\Phi_{t,\,\lz}f(x):=\varphi_t\sharp_\lz f(x):=\dint_0^\fz f(y)\tlz \varphi_t(y)\dmz(y),
%\end{equation*}
%where for $t,\,x\in (0, \fz)$, $\varphi_t(y):=t^{-2\lz-1}\varphi(y/t)$ and $\tlz \varphi_t(y)$
%denotes the {Hankel translation} of $\varphi_t(y)$, that is,
%\begin{equation}\label{Hankel trans}
%\tlz \varphi_t(y):=c_\lz
%\dint_0^\pi \varphi_t\lf(\sqrt{x^2+y^2-2xy\cos \theta}\r)(\sin\theta)^{2\lz-1}\,d\theta
%\end{equation}
%with $c_\lz:=\frac{\bgz(\lz+1/2)}{\bgz(\lz)\sqrt{\pi}}$, see \cite[pp.\,200-201]{bdt}.

Let $P^{[\lz]}_t:= e^{-t\sqrt{\Delta_\lz}}$ be the Poisson semigroup associated with $\Delta_\lz$.
%and 
%Moreover, we recall that  $\{e^{-t\Delta_\lz}\}_{t>0}$ or $\{e^{-t\sqrt{\Delta_\lz}}\}_{t>0}$ have the following properties; see
We recall the following properties from \cite{bdt}.

\begin{lem}\label{l-seimgroup prop}
%Let $\{T_t\}_{t>0}$ be one of $\{e^{-t\Delta_\lz}\}_{t>0}$ or $\{e^{-t\sqrt{\Delta_\lz}}\}_{t>0}$. 
Then $\{P^{[\lz]}_t\}_{t>0}$ is
a symmetric diffusion semigroup satisfying that $P^{[\lz]}_tP^{[\lz]}_s=P^{[\lz]}_sP^{[\lz]}_t$ for any $t,\,s\in(0, \fz)$,
the $C_0$ property, $\lim_{t\to0}P^{[\lz]}_tf=f$ in $\ltz$ and
\begin{itemize}
  \item [${\rm (S_i)}$]$\|P^{[\lz]}_tf\|_{\lpz}\le \|f\|_{\lpz}$ for all $p\in[1, \fz]$ and $t\in(0,\,\fz)$;
  \item[${\rm (S_{ii})}$] $P^{[\lz]}_t f\ge0$ for all $f\ge0$ and $t\in(0, \fz)$;
  \item [${\rm (S_{iii})}$] $P^{[\lz]}_t(1)=1$ for all $t\in (0,\,\fz)$.
\end{itemize}
\end{lem}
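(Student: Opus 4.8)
\medskip
\noindent\textbf{Proof strategy.}
The plan is to deduce every assertion from the corresponding (classical) facts about the Bessel \emph{heat} semigroup $W^{[\lz]}_u:=e^{-u\tbz}$, $u>0$, and to pass to the Poisson semigroup through the subordination formula. So first I would record what is needed about $W^{[\lz]}_u$. The operator $\tbz$, taken with its natural domain, is a nonnegative self-adjoint operator on $\ltz$ (classical; cf.\ \cite{ms,bdt}), and $W^{[\lz]}_u=e^{-u\tbz}$ has the explicit kernel
\[
W^{[\lz]}_u(x,y)=\frac{(xy)^{1/2-\lz}}{2u}\,\exp\!\Big(-\frac{x^2+y^2}{4u}\Big)\,I_{\lz-1/2}\!\Big(\frac{xy}{2u}\Big),
\]
with $I_\nu$ the modified Bessel function of the first kind. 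Since $\lz>0$ forces $\lz-1/2>-1$, one has $I_{\lz-1/2}>0$ on $(0,\fz)$, so $W^{[\lz]}_u(x,y)>0$; the kernel is plainly symmetric in $x,y$; and the standard integral $\inzf e^{-ay^2}I_\nu(by)\,y^{\nu+1}\,dy=(2a)^{-\nu-1}b^\nu e^{b^2/(4a)}$ gives the conservation property $\inzf W^{[\lz]}_u(x,y)\dmzy=1$ for all $x,u$. From these three facts, Jensen's inequality together with the conservation property yields at once that $W^{[\lz]}_u$ is positivity preserving, that $W^{[\lz]}_u(1)=1$, and that $\|W^{[\lz]}_uf\|_{\lpz}\le\|f\|_{\lpz}$ for every $p\in[1,\fz]$; self-adjointness on $\ltz$ is just the kernel symmetry. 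All of this is already in \cite{bdt}, so in the write-up I would simply cite it.

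Next I would invoke the subordination identity
\[
\plz f=e^{-t\sqrt{\tbz}}f=\inzf \varphi_t(u)\,W^{[\lz]}_uf\,du,\qquad \varphi_t(u):=\frac{t}{2\sqrt\pi}\,u^{-3/2}e^{-t^2/(4u)},
\]
which holds on $\ltz$ by the spectral theorem, because $e^{-t\sqrt\mu}=\inzf\varphi_t(u)e^{-u\mu}\,du$ for every $\mu\ge0$, and then extends to $\lpz$ by density using the bounds just listed. The key observation is that $\varphi_t\ge0$ and $\inzf\varphi_t(u)\,du=1$ (the $\mu=0$ case of the previous display), i.e.\ $\varphi_t(u)\,du$ is a probability measure on $(0,\fz)$. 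Granting this, each claim of the lemma is routine: positivity preservation $({\rm S_{ii}})$, since $\plz f$ is an average of the positivity-preserving $W^{[\lz]}_uf$ against a positive measure; $({\rm S_{iii}})$, since $\plz(1)=\inzf\varphi_t(u)\,W^{[\lz]}_u(1)\,du=\inzf\varphi_t(u)\,du=1$; and the $\lpz$-contraction $({\rm S_i})$, $1\le p\le\fz$, by Minkowski's integral inequality, $\|\plz f\|_{\lpz}\le\inzf\varphi_t(u)\,\|W^{[\lz]}_uf\|_{\lpz}\,du\le\|f\|_{\lpz}\inzf\varphi_t(u)\,du=\|f\|_{\lpz}$. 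Self-adjointness of $\plz$ on $\ltz$, the semigroup law $\plz P^{[\lz]}_s=P^{[\lz]}_{t+s}$ (whence the commutation $\plz P^{[\lz]}_s=P^{[\lz]}_s\plz$), and strong continuity in $t>0$ follow directly from the Borel functional calculus of the single self-adjoint operator $\tbz$, since $\plz=g_t(\tbz)$ with $g_t(\mu)=e^{-t\sqrt\mu}$ and $g_tg_s=g_{t+s}$. For the $C_0$ property $\lim_{t\to0}\plz f=f$ in $\ltz$, I would again use the spectral theorem: with $\{E_\mu\}$ the spectral resolution of $\tbz$ one has $\|\plz f-f\|_{\ltz}^2=\inzf|e^{-t\sqrt\mu}-1|^2\,d\langle E_\mu f,f\rangle\to0$ as $t\to0$ by dominated convergence. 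Collecting these properties is precisely the statement that $\{\plz\}_{t>0}$ is a symmetric diffusion semigroup in Stein's sense.

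The only genuinely non-formal ingredient is the explicit Bessel heat kernel and the verification that it is a positive, mass-one kernel; that step is classical (Muckenhoupt--Stein \cite{ms}) and is recorded in \cite{bdt}, so I would cite it rather than reprove it, and the rest of the argument is the soft subordination computation sketched above. Accordingly I expect the actual write-up to be short, with the subordination passage being the only thing carried out in detail.
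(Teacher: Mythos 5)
Your proof is mathematically correct, but note that the paper does not prove this lemma at all: immediately before the statement it says ``We recall the following properties from \cite{bdt},'' so the lemma is simply a citation to Betancor--Dziuba\'nski--Torrea. Your subordination argument — reduce everything to the explicit, positive, unit-mass Bessel heat kernel $W^{[\lz]}_u$, average against the probability density $\varphi_t(u)\,du$ to get ${\rm (S_i)}$--${\rm (S_{iii})}$, and then use the functional calculus of the self-adjoint $\tbz$ for the semigroup law, self-adjointness, and the $C_0$ property via dominated convergence against $d\langle E_\mu f,f\rangle$ — is exactly the standard route and is essentially what the cited source carries out. The one substantive ingredient you flagged, the explicit heat kernel and the Weber--Schafheitlin integral giving $\inzf W^{[\lz]}_u(x,y)\dmzy=1$, does need to be cited or checked (your check is correct, since $\lz>0$ gives $I_{\lz-1/2}>0$ and the integral identity works out to $1$); everything downstream is the soft subordination computation you describe. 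So you have supplied a correct proof for a statement the paper only quotes; there is nothing to compare against beyond noting that your approach is the standard one the paper is implicitly pointing to.
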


Next we recall the definitions of the Poisson kernel and conjugate Poisson kernel, see \cite{bdt}.
For any $t,\,x,\,y\in(0, \fz)$,
\begin{equation*}
P^{[\lz]}_tf(x):=\inzf P^{[\lz]}_t(x,y)f(y)\,dm_\lambda(y),
\end{equation*}
where
\begin{equation*}
P^{[\lz]}_t(x,y)=\frac{2\lz t}{\pi}\int_0^\pi\frac{(\sin\theta)^{2\lz-1}}{(x^2+y^2+t^2-2xy\cos\theta)^{\lz+1}}\,d\theta.
\end{equation*}

If $f\in\lpz$, $p\in[1, \fz)$, the $\Delta_\lz$-conjugate of $f$ is defined by setting,
for any $t,\,x,\,y\in(0, \fz)$,
\begin{equation}\label{conj poi defn}
\qlz(f)(x):=\int_0^\fz\qlz(x, y)f(y)\, dm_\lz(y),
\end{equation}
where
\begin{equation*}
\begin{array}[b]{cl}
\qlz(x, y)&:=-\dfrac{2\lz}{\pi}\dint_0^\pi\dfrac{(x-y\cos\theta)(\sin\theta)^{2\lz-1}}
{(x^2+y^2+t^2-2xy\cos\theta)^{\lz+1}}\,d\theta;
\end{array}
\end{equation*}
see \cite[p.\,84]{ms}. We point out that the boundary value function $\lim_{t\to0}\qlz(f)(x)$
exists for almost every $x\in (0, \fz)$ (see \cite[p.\,84]{ms}),
which is defined to be the Riesz transform $\riz(f)$, i.e.,
\begin{align}\label{riz}
\riz(f)(x):=\lim_{t\to0}\qlz(f)(x) = \int_{\R_+}  -\dfrac{2\lz}{\pi}\dint_0^\pi\dfrac{(x-y\cos\theta)(\sin\theta)^{2\lz-1}}
{(x^2+y^2-2xy\cos\theta)^{\lz+1}}\,d\theta \ f(y) \dmz(y).
\end{align}
%Moreover, we note that $u(t,x):=\plz(f)(x)$ satisfies \eqref{bessel laplace equation} and
 %that $u(t,x):=\plz(f)(x)$ and $v(t,x):=\qlz(f)(x)$ satisfy the Cauchy--Riemann equations \eqref{CR}.

\begin{prop}[\cite{yy}]\label{p-aoti}
For any fixed $t$ and $x\in\R_+$,
$ \plz(x, \cdot)$, $ \qlz(x, \cdot)$, $ \pplz(x, \cdot)$ and $ t\pa_y\plz(x, \cdot)$
as  functions of $x$ are in $\GGone(\beta,\gamma)$ for all $\beta,\gamma\in (0,1]$.  Symmetrically,  for any
fixed $t$ and $y\in\R_+$, $\pplz(\cdot, y)$  and $ t\pa_y\plz(\cdot, y)$ are in $\GGone(\beta,\gamma)$ for all $\beta,\gamma\in (0,1]$.
\end{prop}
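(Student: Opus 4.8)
The plan is to reduce each of the assertions, for $\beta=\gamma=1$, to the standard size and Lipschitz bounds for the Bessel Poisson and conjugate Poisson kernels, and then to read off membership in $\GGone(\beta,\gamma)$ from its definition together with the doubling property. The reduction to $\beta=\gamma=1$ is harmless: the quotients entering the defining conditions of $\GGone(\beta,\gamma)$ lie in $[0,1]$, so lowering the exponents only weakens the inequalities; and since $\cg(x_0,r,\beta,\gamma)=\cg(\beta,\gamma)$ with equivalent norms for all $x_0\in\mathbb R_+$ and $r>0$, one is free to take, for each fixed $t$ and $x$, the reference center $x_0=x$ and radius $r=t$ when verifying the estimates (and symmetrically $x_0=y$, $r=t$ for the second part).

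The analytic input is the family of pointwise bounds, with constants independent of $x,y,t$,
\[
|\plz(x,y)|+|\pplz(x,y)|+|t\pa_y\plz(x,y)|+|\qlz(x,y)|\ \lesssim\ \frac1{m_\lz\bigl(I(x,t+|x-y|)\bigr)},
\]
the corresponding Lipschitz estimates, with left side $|K(x,y)-K(x,\wz y)|$ and right side $\dfrac{|y-\wz y|}{t+|x-y|}\,m_\lz\bigl(I(x,t+|x-y|)\bigr)^{-1}$ for $|y-\wz y|\le\tfrac12(t+|x-y|)$ (and likewise in the first variable), together with the same bounds for $\pplz(\cdot,y)$ and $t\pa_y\plz(\cdot,y)$. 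These are recorded in \cite{bdt} (see also \cite{DLMWY}); alternatively I would prove them from the integral representations displayed above, using the elementary comparison $x^2+y^2+t^2-2xy\cos\theta\sim(x-y)^2+t^2+xy\,\theta^2$ on $[0,\pi]$ to estimate the $\theta$-integrals — for $\plz$ and $\pplz$ the prefactor $t$ (resp.\ $t^3$) does the work, while for $\qlz$ and $t\pa_y\plz$ one integrates by parts using $2\lz\cos\theta\,(\sin\theta)^{2\lz-1}=\tfrac{d}{d\theta}(\sin\theta)^{2\lz}$ (equivalently exploits $x-y\cos\theta=\tfrac1{2x}\bigl(D(\theta)-(y^2+t^2-x^2)\bigr)$, with $D(\theta):=x^2+y^2+t^2-2xy\cos\theta$) to control the numerator near the diagonal.

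Granting these bounds, the conclusion is quick. For the size condition of $\GGone(\beta,\gamma)$ one uses \eqref{doub} to get $m_\lz(I(x,t+|x-y|))\gtrsim m_\lz(I(x,t))+m_\lz(I(x,|x-y|))$, so each bound above has the required form with $x_0=x$, $r=t$. For the regularity condition one applies the mean value theorem to the Lipschitz bounds; the restriction $|y-\wz y|\le\tfrac12(t+|x-y|)$ keeps $t+|x-z|$ comparable to $t+|x-y|$ for $z$ on the segment joining $y$ and $\wz y$, so no loss occurs, and the Lipschitz exponent may be lowered to any $\beta\in(0,1]$ as before. The symmetric part follows by interchanging $x$ and $y$, using that $\plz$ is symmetric and that the bounds for $\pplz(\cdot,y)$ and $t\pa_y\plz(\cdot,y)$ have the same shape; $\qlz(\cdot,y)$ is (correctly) absent, since in its first variable the conjugate Poisson kernel admits no such bound. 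Finally, should $\GGone(\beta,\gamma)$ require a vanishing moment, this is immediate for $\pplz$: differentiating $\inzf\plz(x,y)\dmzy=1$ in $t$ (legitimate by the decay just obtained) and invoking Lemma~\ref{l-seimgroup prop}, property~${\rm(S_{iii})}$, gives $\inzf\pplz(x,y)\dmzy=t\pa_t(1)=0$, and symmetrically in $x$.

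The main obstacle — the only genuinely technical point — is the proof of the pointwise and Lipschitz bounds for $\qlz$ and $t\pa_y\plz$, should one not simply quote \cite{bdt,DLMWY}: unlike for $\plz$ and $\pplz$, no prefactor $t$ is available, so one must extract the right behaviour from the oscillatory $\theta$-integral, the delicate region being $\theta$ near $0$ with $|x-y|$ small relative to $x$, where a crude estimate of the numerator $x-y\cos\theta$ is too lossy and the integration by parts based on the primitive $(\sin\theta)^{2\lz}$ becomes essential. A secondary nuisance is bookkeeping: since $m_\lz(I(x,r))\sim r\,\max\{x,r\}^{2\lz}$ changes homogeneity across $r\sim x$, the passage from the Euclidean-type expressions to the $m_\lz$-form must treat the regimes $t+|x-y|\lesssim x$ and $t+|x-y|\gtrsim x$ (and their analogues near $x,y\to0$) separately.
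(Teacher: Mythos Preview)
The paper gives no proof of this proposition; it is quoted from \cite{yy} (note the bracketed citation in the header). Your strategy --- reduce to $\beta=\gamma=1$, take center $x$ and scale $t$, and read off the test-function conditions from the known pointwise size and Lipschitz estimates on the Bessel Poisson and conjugate Poisson kernels and their derivatives --- is precisely what is carried out in \cite{yy}, so there is nothing of substance to compare.

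One point you flag but leave open deserves comment. Under the paper's definition, $\GGone(\beta,\gamma)$ is the completion of $\cg_0(1,1)$ (test functions with vanishing $\dmz$-integral) in the $\cg(\beta,\gamma)$ norm; since integration against $\dmz$ is continuous on $\cg(\beta,\gamma)$, every element of the completion inherits the cancellation. Your verification for $\pplz$ is correct, but $\plz(x,\cdot)$ has $\dmz$-integral equal to $1$ by Lemma~\ref{l-seimgroup prop}~${\rm(S_{iii})}$, so with this definition it cannot belong to $\GGone(\beta,\gamma)$. This is an inconsistency between the paper's stated definition and the proposition as quoted, not a flaw in your reasoning; the intended space is presumably the completion of $\cg(1,1)$ without a moment condition, under which your plan goes through. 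If you do work with the moment-constrained space, note that the cancellation for $\qlz(x,\cdot)$ and $t\pa_y\plz(x,\cdot)$ is not as immediate as for $\pplz$ and would require a separate argument.
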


We note that, as indicated in \cite{bdt}, see also \cite{bfs}, this Riesz transform $\riz$ is a Calder\'on--Zygmund operator.
In order to prove main results we need the following properties of the kernel $\riz(x,y), x,y\in \mathbb R_+$, of the Riesz
transform $\riz$ that can be found in \cite{BCN, DLWY}.

\begin{prop}\label{t:RieszCZ}
Let $\lambda>0$. There exists constants $K_0, K_1>1$ such that
\begin{itemize}
  \item [\rm(a)] $|\riz(x,y)|\lesssim \begin{cases} \frac1{x^{2\lambda+1}}, &0<y<K_0^{-1}x,\\
                  \frac1{y^{2\lambda+1}}, &0<K_0 x<y;  \end{cases}$
  \item [\rm (b)] For every $x,y\in\mathbb{R}_+$ with $x\not=y$,
 $$  |\riz(x,y)|\ls \frac1{m_\lz(I(x, |x-y|))} ;$$  
 \item [\rm (c)] For every $x,\,y,\, u\in \mathbb{R}_+$ with $|x-y|>\frac32|x-u|$, 
$$|\riz(x, y)-\riz(u,y)|+ |\riz(y, x)-\riz(y,u)|\ls \frac{|x-u|}{|x-y|}\frac1{m_\lz(I(x, |x-y|))};$$
\item [\rm (d)]  $\riz(x,y)\gtrsim \frac1{(xy)^\lambda(y-x)}, 0<x<y<K_0x;$
\item [\rm (e)]  $\riz(x,y)\lesssim -\frac1{(xy)^\lambda(x-y)}, 0<K_0^{-1}x<y<x;$
\item [\rm (f)]    $\riz(x,y)\gtrsim \frac x{y^{2\lambda+2}}, 0<K_1x<y;$
\item [\rm (g)]  $\riz(x,y)\lesssim -\frac1{x^{2\lambda+1}}, 0<y<K_1^{-1}x.$
\end{itemize}
\end{prop}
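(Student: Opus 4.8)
\textbf{Proof proposal for Proposition \ref{t:RieszCZ}.}

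The plan is to work directly from the integral representation
\[
\riz(x,y)=-\frac{2\lz}{\pi}\int_0^\pi\frac{(x-y\cos\theta)(\sin\theta)^{2\lz-1}}{(x^2+y^2-2xy\cos\theta)^{\lz+1}}\,d\theta
\]
obtained by setting $t=0$ in the conjugate Poisson kernel, and to extract all seven estimates by carefully analysing the denominator $D_\theta:=x^2+y^2-2xy\cos\theta=(x-y)^2+2xy(1-\cos\theta)$. The first observation, used throughout, is the two-sided bound $D_\theta\simeq (x-y)^2+xy\theta^2$ for $\theta\in[0,\pi]$, which follows from $1-\cos\theta\simeq\theta^2$ on that range. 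I would also record the elementary bound $|x-y\cos\theta|\le |x-y|+y(1-\cos\theta)\lesssim |x-y|+y\theta^2$, together with $m_\lz(I(x,|x-y|))\simeq |x-y|\max\{x,y,|x-y|\}^{2\lz}$, which is how the ``CZ-normalised'' right-hand sides in (b) and (c) get rewritten in terms of $x,y$.

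For the size estimates, I would split into the two regimes dictated by whether the points are comparable. When $|x-y|\gtrsim\max\{x,y\}$ (the ``separated'' case, which covers (a) with the appropriate $K_0$, as well as (f)--(g) with a larger $K_1$), the denominator satisfies $D_\theta\gtrsim (x-y)^2\simeq\max\{x,y\}^2$ uniformly in $\theta$, so
\[
|\riz(x,y)|\lesssim \int_0^\pi \frac{(|x-y|+y\theta^2)\,\theta^{2\lz-1}}{\max\{x,y\}^{2\lz+2}}\,d\theta\lesssim \frac{1}{\max\{x,y\}^{2\lz+1}},
\]
which gives both lines of (a) and the upper halves; the signs in (f) and (g) come from the fact that in those regimes $x-y\cos\theta$ keeps a fixed sign (negative when $y\gg x$, positive when $y\ll x$) since $|y\cos\theta|\le y$ but one term dominates, and the leading contribution is near $\theta=0$ where $\sin\theta\simeq\theta$, yielding the quantitative lower bounds $\gtrsim x/y^{2\lz+2}$ and $\lesssim -x^{-(2\lz+1)}$ after integrating $\int_0^\pi\theta^{2\lz-1}d\theta$. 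When $|x-y|\lesssim\max\{x,y\}$ (the ``comparable'' case, where $x\simeq y$), I would substitute $a:=|x-y|$, $s:=\sqrt{xy}\simeq x\simeq y$ and split the $\theta$-integral at $\theta_0:=a/s$: on $[0,\theta_0]$ one has $D_\theta\simeq a^2$ and the numerator is $\lesssim a$, contributing $\lesssim a\cdot a^{2\lz}/a^{2\lz+2}=a^{-1}\simeq 1/m_\lz(I(x,a))\cdot s^{2\lz}\cdot$(correct power)—more precisely $\lesssim a^{-1}s^{-2\lz}\simeq 1/m_\lz(I(x,|x-y|))$; on $[\theta_0,\pi]$ one has $D_\theta\simeq s^2\theta^2$ and the integral becomes $\int_{\theta_0}^\pi (a+s\theta^2)\theta^{2\lz-1}(s^2\theta^2)^{-\lz-1}s^{-?}$, which after bounding again returns the same order $a^{-1}s^{-2\lz}$; this proves (b), and refining the near-diagonal analysis (keeping $\sin\theta\simeq\theta$ and noting $x-y\cos\theta\simeq (x-y)+y\theta^2/2$ so that the integrand is essentially $-(x-y)$ over $((x-y)^2+xy\theta^2)^{\lz+1}$ when $0<x<y$, which is positive) gives the matching two-sided bounds $(xy)^{-\lz}/(y-x)$ and $-(xy)^{-\lz}/(x-y)$ in (d) and (e) by computing $\int_0^\infty d\theta/((x-y)^2+xy\theta^2)^{\lz+1}\simeq |x-y|^{-(2\lz+1)}(xy)^{-1/2}\cdot|x-y|\cdot$, i.e. $\simeq (xy)^{-1/2}|x-y|^{-2\lz-1}\cdot$(Beta-function constant), and then using $|x-y|^{-2\lz-1}(xy)^{-1/2}\cdot|x-y|\cdot(xy)^{1/2}$... the bookkeeping here reduces to the elementary identity $\int_0^\infty(A^2+B^2\theta^2)^{-\lz-1}d\theta=c_\lz A^{-2\lz-1}B^{-1}$.

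For the regularity estimate (c), I would use the fundamental theorem of calculus in the first variable: writing $\riz(x,y)-\riz(u,y)=\int_u^x \pa_v\riz(v,y)\,dv$, it suffices to show $|\pa_v\riz(v,y)|\lesssim |v-y|^{-1}\,m_\lz(I(v,|v-y|))^{-1}$ for $v$ on the segment between $x$ and $u$, since the hypothesis $|x-y|>\frac32|x-u|$ forces $|v-y|\simeq |x-y|$ and $m_\lz(I(v,|v-y|))\simeq m_\lz(I(x,|x-y|))$ for all such $v$, so that $\int_u^x|\pa_v\riz|\,dv\lesssim |x-u|\cdot|x-y|^{-1}m_\lz(I(x,|x-y|))^{-1}$ as desired; the $y$-variable estimate $|\riz(y,x)-\riz(y,u)|$ is handled symmetrically using $\pa_v$ of $\riz(y,v)$. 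Differentiating under the integral sign, $\pa_v\riz(v,y)$ is a sum of two terms, one with $(\sin\theta)^{2\lz-1}$ over $D_\theta^{\lz+1}$ and one with $(v-y\cos\theta)^2(\sin\theta)^{2\lz-1}$ over $D_\theta^{\lz+2}$, each of which is estimated by exactly the same split-at-$\theta_0$ argument as in (b), gaining one extra power of $|v-y|^{-1}$ from the stronger singularity of the denominator; this is routine once the machinery for (b) is set up. \textbf{The main obstacle} is the near-diagonal case: making the two-sided (sign-sensitive) estimates (d) and (e) genuinely rigorous requires showing that the cancellation in $x-y\cos\theta=(x-y)+y(1-\cos\theta)$ does not destroy the expected lower bound—one must verify that for $0<x<y<K_0x$ the quantity $(x-y)+y(1-\cos\theta)$ stays comparable to $-(y-x)$ on the bulk of the $\theta$-range that dominates the integral (namely $\theta\lesssim\theta_0$), which needs choosing $K_0$ close enough to $1$ and tracking constants carefully; the separated-case signs (f),(g) are easier because there is no competition of terms, and the constants $K_0,K_1$ are exactly the thresholds that make these sign determinations valid.
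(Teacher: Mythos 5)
The paper does not prove Proposition~\ref{t:RieszCZ}; it cites \cite{BCN, DLWY} and uses the estimates as given, so there is no internal proof to compare against. Taken on its own terms, your framework --- the two-sided bound $D_\theta \simeq (x-y)^2 + xy\theta^2$, the split at $\theta_0 = |x-y|/\sqrt{xy}$ in the comparable-points regime, and FTC plus differentiation under the integral sign for (c) --- is a sound route to (a), (b), (c) and (g), and you correctly flag (d), (e) as requiring a delicate sign analysis with $K_0$ close to $1$.

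The genuine gap is (f). You claim that for $y \gg x$ the factor $x - y\cos\theta$ keeps a fixed negative sign on $[0,\pi]$. This is false: for $\theta > \arccos(x/y)$, in particular for $\theta$ near $\pi$ where $\cos\theta < 0$, one has $x - y\cos\theta > 0$. Worse, the two halves of the integral nearly cancel. Setting $t = x/y$ and pulling $y^{-2\lz-1}$ out, the integral becomes $\int_0^\pi (t - \cos\theta)(\sin\theta)^{2\lz-1}(1 + t^2 - 2t\cos\theta)^{-\lz-1}\,d\theta$, whose $t=0$ limit is $\int_0^\pi(-\cos\theta)(\sin\theta)^{2\lz-1}\,d\theta = 0$ by the symmetry $\theta \mapsto \pi - \theta$. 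A fixed-sign argument with the weight concentrated near $\theta = 0$ would instead produce $|\riz(x,y)| \simeq 1/y^{2\lz+1}$, which overshoots the true asymptotic $\riz(x,y) \simeq x/y^{2\lz+2}$ by a full factor of $y/x$; your own claimed output $x/y^{2\lz+2}$ is not what the sketch you describe would yield (that computation gives $1/y^{2\lz+1}$), so the reasoning is internally inconsistent. To actually prove (f) one must expand in $t$, use the $\theta \mapsto \pi - \theta$ symmetry to kill the zeroth-order term, verify that the first-order coefficient $\int_0^\pi [1 - 2(\lz+1)\cos^2\theta](\sin\theta)^{2\lz-1}\,d\theta = -\tfrac12 B(\lz,\tfrac12)/(\lz+\tfrac12)$ is strictly negative so that $\riz(x,y) > 0$, and control the remainder for $y > K_1 x$ with $K_1$ large. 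This cancellation mechanism is absent from your proposal and is structurally a different argument from the one you outline; it cannot be recovered by tuning constants.
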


%We now recall the Hardy and BMO spaces, and some important properties of Riesz transforms
%related to the Bessel operator $\tbz $  from \cite{ms,bdt,bfbmt,bfs}.

\subsection{The Hardy Space in the Bessel Setting}

We now recall the atomic characterization of the Hardy spaces $\hoz$ in \cite{bdt}.
%; see also\cite{cw77}.
From \cite{bdt}, the Hardy space $\hoz$ can be characterized via an atomic decomposition. That is,
an $\loz$ function $f\in \hoz$ if and only if
$$f=\sum_{j=1}^\fz\az_j a_j\quad {\rm in}\quad  \loz,$$ where for every $j$,
$a_j$ is a $(1, 2)_{\tbz }$-atom and $\az_j\in\rr$ satisfies that
$\sum_{j=1}^\fz|\az_j|<\fz$. Moreover,
$\|f\|_\hoz\sim \inf\lf\{\sum_{j=1}^\fz|\az_j|\r\},$
where the infimum is taken over all the decompositions of $f$ as above.

We also note that $\hoz$  can be
characterized in terms of the radial
maximal function associated with the Hankel convolution of a class
of functions, including the Poisson semigroup and the heat
semigroup as special cases.  It is also  proved in \cite{bdt} that $H^1(\mathbb{R}_+, \dmz)$ is the one associated with
the space of homogeneous type $(\mathbb{R}_+, \rho, dm_\lz)$ defined by Coifman and Weiss in \cite{cw77}.

%%%%%%%%%%%%%%%%%%%%%%%%%%%
%%%%%%%%%%%%%%%%%%%%%%%%%%%
%%%%%%%%%%%%%%%%%%%%%%%%%%%

\section{The Lipschitz Space $\Lambda^\beta$, $0<\beta<1$, via the Commutator of the Riesz Transform: Proof of Theorem \ref{thm0}}

To prove Theorem \ref{thm0}, we need the characterization of the Besov space as follows.
We recall the standard Lipschitz space on $\mathbb R_+$, denoted by $\Lambda^\beta$.
% and then provide its equivalent characterisations in different forms.
\begin{defn}\label{def Lip}
For $\beta\in (0,1)$, the Lipschitz space $\Lambda^\beta$  is the set of functions defined on $\mathbb R_+$ such that 
 $$\|f\|_{\Lambda^\beta}:=\sup_{x,y\in\mathbb R_+:\ x\ne y} \frac{|f(x)-f(y)|}{|x-y|^\beta}<\infty.$$
\end{defn}    

%\color{blue}I am a little confused by this definition, it appears to simply be the definition of Lipschitz on the domain $\mathbb{R}_+$ and has no real connection to the Bessel setting.  I see this is what you use later in the file, but maybe explain that this is just the usual Lipschitz space?\color{black}

We note that 
$\Lambda^\beta$ is equivalent to the Besov space  $\binfty$ associated with ${\tbz }$.
\begin{thm}\label{lip}
For $\beta\in (0,1)$ and $q\in [1, \infty)$, the following statements are equivalent.
\begin{itemize}
\item[{\rm (i)}] $f\in \Lambda^\beta;$
\item[{\rm (ii)}] $\displaystyle\sup_{I(x_0,r)}\frac 1{r^{\beta}}\bigg(\frac1{m_\lambda(I)}\int_I |f(x)-f_I|^q dm_\lambda(x)\bigg)^{\frac1q}<\infty$; 
\item[{\rm (iii)}] $f$ belongs to Besov space $\binfty$.
\end{itemize}
\end{thm}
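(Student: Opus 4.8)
The plan is to prove the cycle of implications $(i)\Rightarrow(ii)\Rightarrow(iii)\Rightarrow(i)$, which is the most economical route and avoids dealing with the quantifier on $q$ more than once. The implication $(i)\Rightarrow(ii)$ is essentially immediate: if $f\in\Lambda^\beta$, then for any ball $I=I(x_0,r)$ and any $x\in I$ one has $|f(x)-f_I|\leq \frac{1}{m_\lambda(I)}\int_I |f(x)-f(y)|\,dm_\lambda(y)\leq \|f\|_{\Lambda^\beta}(2r)^\beta$, and raising to the $q$-th power, averaging, and taking the $q$-th root gives $(ii)$ with constant $\lesssim \|f\|_{\Lambda^\beta}$, uniformly in $q$. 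Note this half does not even use the doubling structure in an essential way.

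For $(iii)\Rightarrow(i)$, I would use the Calder\'on reproducing formula attached to the approximation to the identity: any $f\in(\GG(\beta_0,\gamma_0))'$ can be written (in the appropriate distributional sense) as $f=\sum_{k\in\mathbb Z}\widetilde D_k D_k f$, where $\{\widetilde D_k\}$ is a companion family with the same size, smoothness, and cancellation estimates $\mathrm{(A_i)}$--$\mathrm{(A_{iii})}$ as in Definition \ref{def-ati}. Given $x\neq y$, I would estimate $|f(x)-f(y)|\leq \sum_k \int_0^\infty |\widetilde D_k(x,z)-\widetilde D_k(y,z)|\,|D_kf(z)|\,dm_\lambda(z)$ and insert the bound $|D_kf(z)|\leq \|f\|_{\binfty}2^{-\beta k}$ from Definition \ref{def Besov}. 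Splitting the sum at the scale $2^{-k}\sim|x-y|$: for $2^{-k}\lesssim|x-y|$ use the size estimate $\mathrm{(A_i)}$ on $\widetilde D_k$ together with the doubling property \eqref{doub}, which sums a geometric series in $2^{-\beta k}$ up to scale $|x-y|$, producing $\lesssim |x-y|^\beta$; for $2^{-k}\gtrsim|x-y|$ use the smoothness estimate $\mathrm{(A_{ii})}$, gaining a factor $(|x-y|2^{k})^{\beta_0}$, and again sum the resulting geometric series. The total is $\lesssim \|f\|_{\binfty}|x-y|^\beta$, giving $f\in\Lambda^\beta$; here one must also check that a representative of the distribution $f$ is genuinely a (locally bounded) function, which follows from the same argument applied to $|f(x)|$ after subtracting a suitable constant, or by noting the $\Lambda^\beta$ estimate controls oscillation.

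The remaining and most delicate implication is $(ii)\Rightarrow(iii)$, i.e.\ producing the pointwise bound $2^{\beta k}|D_kf(x)|\lesssim$ (the $(ii)$-seminorm). The natural approach: fix $k$ and $x$, and since $D_k$ annihilates constants, write $D_kf(x)=\int_0^\infty D_k(x,z)\big(f(z)-f_{I(x,2^{-k})}\big)\,dm_\lambda(z)$. Decompose $\mathbb R_+$ into the annuli $A_0=I(x,2^{-k})$ and $A_j=I(x,2^{j-k})\setminus I(x,2^{j-1-k})$ for $j\geq 1$, use the size estimate $\mathrm{(A_i)}$ on $D_k(x,z)$ which decays like $2^{-j\gamma}/m_\lambda(I(x,2^{j-k}))$ on $A_j$, and control $\int_{A_j}|f(z)-f_{I(x,2^{-k})}|\,dm_\lambda(z)$ by telescoping through the averages $f_{I(x,2^{i-k})}$, $0\leq i\leq j$; each consecutive difference is $\lesssim 2^{(i-k)\beta}$ times the $(ii)$-seminorm by the doubling property and a standard comparison of averages over nested balls, so $\int_{A_j}|f(z)-f_{I(x,2^{-k})}|\,dm_\lambda(z)\lesssim j\,2^{(j-k)\beta}m_\lambda(I(x,2^{j-k}))$. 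Summing over $j$ the series $\sum_j j\,2^{-j\gamma}2^{(j-k)\beta}$ converges provided $\gamma>\beta$ (one chooses the regularity parameter of the approximation to the identity accordingly, which is legitimate), yielding $|D_kf(x)|\lesssim 2^{-\beta k}$ times the $(ii)$-seminorm. The main obstacle is precisely this telescoping control of the averages in a space of homogeneous type without the Euclidean dilation structure — one must use \eqref{doub} carefully to compare $f_{I(x,2^{i-k})}$ and $f_{I(x,2^{i+1-k})}$, and be attentive near the boundary $x\sim 0$ where $I(x,r)$ is a truncated interval; but the doubling estimate \eqref{doub} and the definition of $m_\lambda$ are exactly what make this comparison uniform. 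Finally, independence of $(iii)$ from the choice of $\{D_k\}$ — already asserted after the definition — guarantees the statement is well posed, so no extra work is needed there.
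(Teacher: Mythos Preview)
Your proof is correct and self-contained. The paper, by contrast, does not give a proof at all: it simply cites Deng--Han \cite[p.~98, Lemma 4.3]{DH} for the equivalence $(i)\Leftrightarrow(iii)$ (noting that their extra hypothesis $m_\lambda(I(x,r))\sim r$ is not essential) and Nakai \cite[Theorem 2.4]{N} for $(i)\Leftrightarrow(ii)$. So the paper's ``proof'' is two literature references, whereas you run the full cycle $(i)\Rightarrow(ii)\Rightarrow(iii)\Rightarrow(i)$ directly from the properties $\mathrm{(A_i)}$--$\mathrm{(A_{iv})}$ of the approximation to the identity and the Calder\'on reproducing formula. Your route is more instructive and makes the role of the smoothness and decay parameters explicit (you need $\gamma>\beta$ in $(ii)\Rightarrow(iii)$ and the $\mathrm{(A_{ii})}$ smoothness exponent larger than $\beta$ in $(iii)\Rightarrow(i)$, both of which are available since Coifman's construction gives exponent $1$); the paper's route is shorter but opaque. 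Two minor remarks: in your telescoping step the bound $j\,2^{(j-k)\beta}$ is actually loose --- the consecutive differences $|f_{I(x,2^{i+1-k})}-f_{I(x,2^{i-k})}|\lesssim 2^{(i-k)\beta}$ sum geometrically to $\lesssim 2^{(j-k)\beta}$ without the factor $j$, though your version still converges; and in $(iii)\Rightarrow(i)$ the reproducing formula converges a priori only in $(\GG(\beta_0,\gamma_0))'$ modulo constants, so your parenthetical about extracting a pointwise representative is not optional but is handled exactly as you sketch.
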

We note that this result is not completely new. The proof of (i)$\Leftrightarrow$(iii) was hidden in the previous related result on spaces of homogeneous type by Deng and Han \cite[Page 98, Lemma 4.3]{DH}, where they assumed the measure of the ball is equivalent to the radius of the ball. However, this assumption is not essential. And the equivalence (i)$\Leftrightarrow$(ii) follows from \cite[Theorem 2,4]{N}.

\begin{rem}
Theorem \ref{lip} shows that the space $\binfty$ does not depend on  $\lambda$.
\end{rem}
%We will explanation the key points in the proof only, rather than repeating the previous proof with only some modification.

We also need the difference characterization of
Triebel--Lizorkin spaces.

\begin{thm}\label{prop 3.1}
For $0<\beta<1$ and $1<p<\infty$, we have
$$\|f\|_{{{\dot F}^{\beta,\infty}}_{\lambda ,p}}\sim 
\bigg\|\sup_{k\in \mathbb Z} \frac 1{m_\lambda(I(\cdot, 2^{-k}))2^{-k \beta}}\int_{I(\cdot, 2^{-k})} |f(y)-f_{I(\cdot,2^{-k})}|dm_\lambda(y) \bigg\|_\lpz.$$ 
\end{thm}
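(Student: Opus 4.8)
\textbf{Proof plan for Theorem \ref{prop 3.1}.}
The plan is to prove the two-sided estimate by relating both quantities to the oscillation of $f$ over balls and exploiting the characterisation of $\dot F^{\beta,\infty}_{\lambda,p}$ via the operators $D_k$ coming from the approximation to the identity $\{S_k\}$. Throughout, write $\mathcal M_\beta f(x):=\sup_{k\in\mathbb Z}\frac{1}{m_\lambda(I(x,2^{-k}))\,2^{-k\beta}}\int_{I(x,2^{-k})}|f(y)-f_{I(x,2^{-k})}|\,dm_\lambda(y)$ for the right-hand maximal function, and recall from the remarks after the definition of $\dot{\mathcal F}^{\alpha,q}_{\lambda,p}$ that $\|f\|_{\dot F^{\beta,\infty}_{\lambda,p}}=\big\|\sup_k 2^{k\beta}|D_kf|\big\|_{\lpz}$ is independent of the admissible choice of $\{D_k\}$.

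\emph{Step 1 (the estimate $\|f\|_{\dot F^{\beta,\infty}_{\lambda,p}}\lesssim \|\mathcal M_\beta f\|_{\lpz}$).}
First I would use the cancellation $\int D_k(x,y)\,dm_\lambda(y)=0$ to write, for fixed $x$ and $k$,
\[
D_kf(x)=\int_{\mathbb R_+} D_k(x,y)\,\big(f(y)-f_{I(x,2^{-k})}\big)\,dm_\lambda(y).
\]
Then I split the integral into the annuli $I(x,2^{-k})$ and $\{2^{j}2^{-k}\le |x-y|<2^{j+1}2^{-k}\}$ for $j\ge 0$. On each annulus I use the size bound ${\rm (A_i)}$ on $D_k(x,y)$, which gives a factor $2^{-j\gamma}$ decay together with $m_\lambda(I(x,2^{-k}|x-y|))^{-1}$, and I control $|f(y)-f_{I(x,2^{-k})}|$ by $|f(y)-f_{I(x,2^{j-k})}|$ plus a telescoping sum $\sum_{i=0}^{j}|f_{I(x,2^{i-k})}-f_{I(x,2^{i+1-k})}|$, each term of which is dominated by the averaged oscillation over $I(x,2^{i+1-k})$, hence by $2^{(i+1-k)\beta}\mathcal M_\beta f(x)$. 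Summing the geometric series in $j$ (here I need $\gamma>\beta$, which is available since $\gamma$ can be taken in $(\beta,1)$) yields $2^{k\beta}|D_kf(x)|\lesssim \mathcal M_\beta f(x)$ pointwise, and taking the sup in $k$ and the $\lpz$ norm finishes this direction.

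\emph{Step 2 (the estimate $\|\mathcal M_\beta f\|_{\lpz}\lesssim \|f\|_{\dot F^{\beta,\infty}_{\lambda,p}}$).}
For this direction I would use the Calderón-type reproducing formula associated with the approximation to the identity, $f=\sum_{j\in\mathbb Z} D_jD_j f$ in $(\GG(\beta_0,\gamma_0))'$ (as in Han--M\"uller--Yang \cite{hmy06,hmy08}), so that, fixing $x$ and $k$,
\[
f(y)-f_{I(x,2^{-k})}=\sum_{j\in\mathbb Z}\Big(D_jD_jf(y)-\big(D_jD_jf\big)_{I(x,2^{-k})}\Big).
\]
I then estimate $\frac{1}{m_\lambda(I(x,2^{-k}))}\int_{I(x,2^{-k})}|D_jD_jf(y)-(D_jD_jf)_{I(x,2^{-k})}|\,dm_\lambda(y)$ by splitting into $j\ge k$ and $j<k$. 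For $j\ge k$ I use the crude bound by $\sup_I |D_jD_jf|$ restricted near $I(x,2^{-k})$, which is $\lesssim 2^{-j\beta}$ times a Hardy--Littlewood maximal function of $\sup_i 2^{i\beta}|D_if|$; summing $2^{k\beta}\sum_{j\ge k}2^{-j\beta}$ converges. For $j<k$ I instead use the smoothness ${\rm (A_{ii})}$ of the kernel of $D_jD_j$ to gain a factor $(2^{-k}/2^{-j})^{\beta'}=2^{(j-k)\beta'}$ for some $\beta'\in(\beta,1)$ from the oscillation over the small ball $I(x,2^{-k})$, again against $2^{-j\beta}M\big(\sup_i 2^{i\beta}|D_if|\big)(x)$; summing $2^{k\beta}\sum_{j<k}2^{(j-k)\beta'}2^{-j\beta}$ also converges. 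This yields $\mathcal M_\beta f(x)\lesssim M\big(\sup_i 2^{i\beta}|D_if|\big)(x)$, and the Fefferman--Stein / ordinary $\lpz$-boundedness of the Hardy--Littlewood maximal operator $M$ on the space of homogeneous type $(\mathbb R_+,|\cdot|,dm_\lambda)$ (valid since $1<p<\infty$) completes the proof.

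\emph{Main obstacle.} The delicate point is Step 2: one must justify the reproducing formula $f=\sum_j D_jD_jf$ with convergence in the right distribution space and with the almost-orthogonality estimates for the composite kernels $D_jD_j$ (the kernel of $D_j\circ D_j$ satisfies size/smoothness bounds at scale $2^{-j}$ with an extra decay in $|i-j|$ when composing different indices), and then to organise the two regimes $j\ge k$ and $j<k$ so that the geometric sums close — this requires the freedom to choose the regularity exponents $\beta_0,\gamma_0$ strictly larger than $\beta$, which is where the hypothesis $0<\beta<1$ is used. By contrast, Step 1 is essentially a routine annular decomposition. An alternative, slightly softer route for Step 2 is to invoke directly the Plancherel--P\'olya inequality for Triebel--Lizorkin spaces on spaces of homogeneous type from \cite{hmy06,hmy08} to pass from $\sup_k 2^{k\beta}|D_kf|$ to an arbitrary admissible Littlewood--Paley family, and then recognise $\mathcal M_\beta f$ as comparable to such a family built from the specific averaging operators $\frac{1}{m_\lambda(I(\cdot,2^{-k}))}\int_{I(\cdot,2^{-k})}$; I would present the self-contained argument above as the main line and mention this alternative.
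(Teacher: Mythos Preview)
Your proposal is correct, but takes a genuinely different route from the paper. The paper's proof is much shorter because it outsources the hard work: it cites Proposition~\ref{defference} (the difference characterisation $\|f\|_{\dot F^{\beta,\infty}_{\lambda,p}}\sim \big\|\sup_k 2^{k\beta}\fint_{I(\cdot,2^{-k})}|f(\cdot)-f(y)|\,dm_\lambda(y)\big\|_{\lpz}$ from \cite{WHYY}) as a black box, and then observes by the triangle inequality that the mean-oscillation integrand $|f(y)-f_I|$ is dominated by twice the first-difference integrand $|f(y)-f(x)|$, giving one direction immediately; for the converse they simply say that the proof of \cite[Proposition~4.1]{WHYY} goes through with $f(\cdot)$ replaced by $f_{I(\cdot,2^{-k})}$. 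Your argument is a self-contained unpacking of what underlies \cite{WHYY}: the cancellation/annular decomposition in Step~1 is exactly the mechanism behind one half of the difference characterisation, and your Step~2 via the reproducing formula and almost-orthogonality is the standard engine behind the other half. What the paper's approach buys is brevity and modularity; what yours buys is that nothing is hidden in an external reference, and the reader sees precisely where the constraint $0<\beta<1$ (so that one can choose regularity exponents $\beta',\gamma>\beta$) enters. One minor caveat: the reproducing formula on spaces of homogeneous type is $f=\sum_j \widetilde D_j D_j f$ with a dual family $\widetilde D_j$ rather than literally $D_jD_j$, but since $\widetilde D_j$ enjoys the same kernel estimates this does not affect your argument.
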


To prove this theorem, we need the difference characterization of
Triebel--Lizorkin spaces on spaces of homogeneous type see \cite[Propositions 4.1 and 4.6]{WHYY}.

\begin{prop}\label{defference}
For $0<\beta<1<p<\infty$, we have
$$\|f\|_{{\dot F}^{\beta,\infty}_{\lambda ,p}}\sim \bigg\|\sup_{k\in \mathbb Z} \frac 1{m_\lambda(I(\cdot, 2^{-k}))
{2^{-k\beta}}}\int_{I(\cdot, 2^{-k})} |f(\cdot)-f(y)|dm_\lambda(y)\bigg\|_\lpz.$$ 
\end{prop}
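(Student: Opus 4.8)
Since $(\rrp,|\cdot|,m_\lz)$ is a space of homogeneous type by \eqref{doub}, Proposition~\ref{defference} is the specialisation to this setting of the difference characterisation of Triebel--Lizorkin spaces on spaces of homogeneous type in \cite{WHYY}; I indicate how the two inequalities run here. Recall that the kernels in Definition~\ref{def-ati}, and the kernels $\widetilde D_j$ of the Calder\'on reproducing formula built from them, satisfy ${\rm (A_i)}$--${\rm (A_{iii})}$ for \emph{every} pair of exponents in $(0,1]$, so we may fix once and for all $\beta<\beta'<1$ and $\beta<\gamma,\gamma'\le 1$ as the working smoothness/decay exponents (this is where $\beta<1$ is used). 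Write $I_k(x):=I(x,2^{-k})$,
\[
g_k(x):=\frac{2^{k\beta}}{m_\lz(I_k(x))}\int_{I_k(x)}|f(x)-f(y)|\dmzy,\qquad G(x):=\sup_{j\in\mathbb Z}2^{j\beta}|D_jf(x)|,
\]
so that $\|G\|_\lpz=\|f\|_{{\dot F}^{\beta,\infty}_{\lambda ,p}}$, and let $\mathcal M$ be the Hardy--Littlewood maximal operator for $m_\lz$.

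\emph{The inequality $\|f\|_{{\dot F}^{\beta,\infty}_{\lambda ,p}}\ls\|\sup_k g_k\|_\lpz$.} This half needs no reproducing formula. By the cancellation $\inzf D_k(x,y)\dmzy=0$ we have $D_kf(x)=\inzf D_k(x,y)\big(f(y)-f(x)\big)\dmzy$. Splitting $\rrp$ into $I_k(x)$ and the annuli $I_{k-l}(x)\setminus I_{k-l+1}(x)$, $l\ge1$, and using ${\rm (A_i)}$ for $D_k$ (size $|D_k(x,y)|\ls 2^{-l\gamma}/m_\lz(I_{k-l}(x))$ on the $l$-th piece), one gets the pointwise bound
\[
2^{k\beta}|D_kf(x)|\ls\sum_{l\ge0}2^{-l(\gamma-\beta)}\,g_{k-l}(x)\ls\sup_{j\in\mathbb Z}g_j(x),
\]
the series converging because $\gamma>\beta$. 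Taking $\sup_k$ and then $\lpz$ norms finishes this direction.

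\emph{The inequality $\|\sup_k g_k\|_\lpz\ls\|f\|_{{\dot F}^{\beta,\infty}_{\lambda ,p}}$.} Here one invokes the continuous Calder\'on reproducing formula $f=\sum_{j\in\mathbb Z}\widetilde D_jD_jf$ on $(\rrp,|\cdot|,m_\lz)$ \cite{hmy06,hmy08}, convergent in $(\GG(\beta_0,\gamma_0))'$ modulo constants (so that $f(x)-f(y)$ is meaningful), with $\widetilde D_j$ obeying size and regularity of the form ${\rm (A_i)}$--${\rm (A_{ii})}$ with exponents $\gamma',\beta'$. For $y\in I_k(x)$ write $f(x)-f(y)=\sum_j\inzf\big(\widetilde D_j(x,z)-\widetilde D_j(y,z)\big)D_jf(z)\dmz(z)$ and split at $j=k$. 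For $j<k$: since $|x-y|<2^{-k}\le\frac12(2^{-j}+|x-z|)$, regularity ${\rm (A_{ii})}$ of $\widetilde D_j$ yields a factor $2^{-(k-j)\beta'}$; bounding $|D_jf|\le 2^{-j\beta}G$ and the residual kernel integral by $\mathcal M(G)(x)$, then multiplying by $2^{k\beta}$ and summing, the $j<k$ contribution to $g_k(x)$ is $\ls\sum_{m\ge1}2^{-m(\beta'-\beta)}\mathcal M(G)(x)\ls\mathcal M(G)(x)$, uniformly in $y\in I_k(x)$. For $j\ge k$: from ${\rm (A_i)}$ of $\widetilde D_j$ and $|D_jf|\le 2^{-j\beta}G$ one gets $|\widetilde D_jD_jf(u)|\ls 2^{-j\beta}\mathcal M(G)(u)$ for $u\in\{x\}\cup I_k(x)$; using $|f(x)-f(y)|\le\sum_{j\ge k}\big(|\widetilde D_jD_jf(x)|+|\widetilde D_jD_jf(y)|\big)$ on this range, averaging over $y\in I_k(x)$, multiplying by $2^{k\beta}$ and summing $\sum_{j\ge k}2^{(k-j)\beta}$ contributes $\ls\mathcal M(G)(x)+\mathcal M(\mathcal M(G))(x)$. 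Hence $\sup_k g_k(x)\ls\mathcal M(\mathcal M(G))(x)$, and the $\lpz$-boundedness of $\mathcal M$ for $p>1$ gives $\|\sup_k g_k\|_\lpz\ls\|\mathcal M(\mathcal M(G))\|_\lpz\ls\|G\|_\lpz=\|f\|_{{\dot F}^{\beta,\infty}_{\lambda ,p}}$.

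\emph{Main obstacle.} The first inequality and the geometric-series/maximal-function bookkeeping are routine; the only substantive ingredient is the Calder\'on reproducing formula on $(\rrp,|\cdot|,m_\lz)$ with kernels $\widetilde D_j$ inheriting size, smoothness and cancellation of the same type as the $D_j$, together with the verification that the regularity exponent can be taken strictly above $\beta$ and the care needed to make sense of $\sum_j\widetilde D_jD_jf$ in $(\GG(\beta_0,\gamma_0))'$ modulo constants. If one prefers a single maximal operator in place of the iterate $\mathcal M\circ\mathcal M$, replace $\mathcal M(G)$ in the $j\ge k$ estimate by $\big(\mathcal M(G^r)\big)^{1/r}$ with $r\in(0,1)$ close to $1$ and use boundedness of $\mathcal M$ on $L^{p/r}(\rrp,\dmz)$; this is the form the argument takes in \cite{WHYY}.
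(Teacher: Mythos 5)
Your proposal is correct and takes essentially the same approach as the paper: the paper itself offers no proof of Proposition~\ref{defference} but simply cites \cite[Propositions 4.1 and 4.6]{WHYY}, and you likewise ground the result in \cite{WHYY} while additionally sketching (accurately, including the split at $j=k$, the roles of $\beta'>\beta$ and $\gamma>\beta$, and the caveat about interpreting $f(x)-f(y)$ for distributions modulo constants) how that reference's argument runs.
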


\begin{proof}[Proof of Theorem \ref{prop 3.1}]
Fix $x\in \mathbb R_+$ and $k\in \mathbb Z$.
\begin{align*}
 &\frac 1{m_\lambda(I(x, 2^{-k})){ 2^{-k\beta}}}\int_{I(x, 2^{-k})} |f(y)-f_{I(x,2^{-k})}|dm_\lambda(y)\\
 &\quad \le \frac1{m_\lambda(I(x, 2^{-k})){ 2^{-k\beta}}}\int_{I(x, 2^{-k})} |f(y)-f(x)|dm_\lambda(y) \\
      &\qquad\qquad + \frac1{m_\lambda(I(x, 2^{-k})){ 2^{-k\beta}}}\int_{I(x, 2^{-k})} |f(x)-f_{I(x, 2^{-k})}|dm_\lambda(y) \\
  &\quad \le \frac1{m_\lambda(I(x, 2^{-k})){ 2^{-k\beta}}}\int_{I(x, 2^{-k})} |f(y)-f(x)|dm_\lambda(y) \\
      &\qquad\qquad + \frac1{m_\lambda(I(x, 2^{-k})){ 2^{-k\beta}}} \frac1{m_\lambda(I(x, 2^{-k}))}\int_{I(x, 2^{-k})}\int_{I(x, 2^{-k})}  |f(x)-f(z)|dm_\lambda(z)dm_\lambda(y) \\
   &\quad \le \frac2{m_\lambda(I(x, 2^{-k})){ 2^{-k\beta}}}\int_{I(x, 2^{-k})} |f(y)-f(x)|dm_\lambda(y).
  \end{align*}
Hence, by using Proposition \ref{defference}, 
\begin{align*}
& \bigg\|\sup_{k\in \mathbb Z} \frac 1{m_\lambda(I(\cdot, 2^{-k})){ 2^{-k\beta}}}\int_{I(\cdot, 2^{-k})} |f(y)-f_{I(\cdot, 2^{-k})}|dm_\lambda(y)\bigg\|_\lpz \\
&\quad \le 2 \bigg\|\sup_{k\in \mathbb Z} \frac 1{m_\lambda(I(\cdot, 2^{-k})){ 2^{-k\beta}}}\int_{I(\cdot, 2^{-k})} |f(\cdot)-f(y)|dm_\lambda(y)\bigg\|_\lpz\\
&\quad \lesssim \|f\|_{{\dot F}^{\beta,\infty}_{\lambda ,p}}.
\end{align*}

Conversely, we use the proof of \cite[Propositions 4.1]{WHYY} by replacing $f(\cdot)$ by $f_{I(\cdot, 2^{-k})}$ and get the following
estimate
$$\|f\|_{{\dot F}^{\beta,\infty}_{\lambda ,p}}\lesssim \bigg\|\sup_{k\in \mathbb Z} \frac 1{m_\lambda(I(\cdot, 2^{-k}))
{ 2^{-k\beta}}}\int_{I(\cdot, 2^{-k})} |f_{I(\cdot, 2^{-k})}-f(y)|dm_\lambda(y)\bigg\|_\lpz.$$
Hence, 
$$\|f\|_{{\dot F}^{\beta,\infty}_{\lambda ,p}}\sim \bigg\|\sup_{k\in \mathbb Z} \frac 1{m_\lambda(I(\cdot, 2^{-k}))
{ 2^{-k\beta}}}\int_{I(\cdot, 2^{-k})} |f_{I(\cdot, 2^{-k})}-f(y)|dm_\lambda(y)\bigg\|_\lpz
$$
and the proof is complete.
 \end{proof}

We now prove Theorem \ref{thm0}.

\begin{proof}[Proof of Theorem \ref{thm0}]
(a) $\Rightarrow$ (b): 
Fix $x\in \mathbb R^N, k\in \mathbb Z$ and $r=2^{-k}$. Set $I=I(x, r)$ and $f\in L^p(\mathbb{R}_+)$. 
Let $f_1=f\chi_{2I}$ and $f_2=f-f_1$. Observe that $[b,\riz]f=[b-b_I,\riz]f$, and we have:
\begin{align*}
&\frac1{m_\lambda(I) r^\beta}\int_I |[b,\riz]f(y)-([b,\riz]f)_I|dm_\lambda(y)  \\
&=\frac1{m_\lambda(I) r^\beta}\int_I |[b-b_I,\riz]f(y)-([b-b_I,\riz]f)_I|dm_\lambda(y)  \\
&\le \frac2{m_\lambda(I) r^\beta}\int_I |[b-b_I,\riz]f(y)-\riz((b-b_I)f_2)(x)|dm_\lambda(y)  \\
&\le \frac2{m_\lambda(I) r^\beta}\int_I |(b(y)-b_I)\riz f(y)|dm_\lambda(y) +
       \frac2{m_\lambda(I) r^\beta}\int_I |\riz((b-b_I)f_1)(y)|dm_\lambda(y)  \\
&\qquad + \frac2{r^\beta} \sup_{y\in I} |\riz((b-b_I)f_2)(y)-\riz((b-b_I)f_2)(x)| \\
&=: I+ II +III.
\end{align*}
We estimate these terms individually. For $1<t<p$, H\"older's inequality gives
\begin{align*}
&\frac1{m_\lambda(I) r^\beta}\int_I |(b(y)-b_I)\riz f(y)|dm_\lambda(y) \\
&\le \frac1{r^\beta} \Big(\frac1{m_\lambda(I)} \int_I |b(y)-b_I|^{t'} dm_\lambda(y) \Big)^{\frac1{t'}} 
           \Big(\frac1{m_\lambda(I)}\int_I |\riz f(y)|^tdm_\lambda(y)\Big)^{\frac 1t}\\ 
&\lesssim \|b\|_{\Lambda^\beta} M_t(\riz f)(x),
\end{align*}
where the last inequality follows from Theorem \ref{lip} and $M$ is the Hardy-Littlewood maximal function in space of
homogeneous type $(\Bbb R_+, dm_\lambda)$ and $M_t(f)=(M(f^t))^{\frac1t}$. Thus,
$$I\lesssim \|b\|_{\Lambda^\beta} M_t(\riz f)(x).$$

To estimate $II$, we use the boundedness of $\riz$ and H\"older's inequality to obtain for $1<t<p$
\begin{align*}
&\frac1{m_\lambda(I) r^\beta}\int_I |\riz((b-b_I)f_1)(y)|dm_\lambda(y) \\
&\le \frac1{m_\lambda(I) r^\beta}\|\riz((b-b_I)f_1)\|_{\lttz}m_\lambda(I)^{1-\frac 1t} \\
&\le m_\lambda(I)^{-\frac 1t}r^{-\beta} \|(b-b_I)f_1\|_{\lttz}.
\end{align*}
By H\"older's inequality with $1<s$ and $st<p$,
\begin{align*}
\frac1{m_\lambda(I)}\|(b-b_I)f_1\|_{\lttz}^{t}
&=\frac1{m_\lambda(I)}\int_{2I} |b(y)-b_I|^t|f|^tdm_\lambda(y) \\
&\le \Big(\frac1{m_\lambda(I)}\int_{2I} |b(y)-b_I|^{ts'}dm_\lambda(y) \Big)^{\frac1{s'}}
         \Big(\frac1{m_\lambda(I)}\int_{2I} |f|^{ts} dm_\lambda(y)\Big)^{\frac1s}.
\end{align*}
Now use Theorem \ref{lip} to get

\begin{equation}\label{norm}
\begin{aligned}
&\frac1{m_\lambda(I)}\int_{2I} |b(y)-b_I|^{ts'}dm_\lambda(y) \\
&\lesssim\frac1{m_\lambda(I)}\int_{2I} |b(y)-b_{2I}|^{ts'}dm_\lambda(y) 
       + |b_{2I}-b_I|^{ts'}\\
&\lesssim \frac1{m_\lambda(2I)}\int_{2I} |b(y)-b_{2I}|^{ts'}dm_\lambda(y)
     + \frac1{m_\lambda(I)}\int_I |b_{2I}-b(y)|^{ts'}dm_\lambda(y) \\
&\lesssim r^{\beta s't}\|b\|^{s't}_{\Lambda^\beta}.
\end{aligned}
\end{equation}
Thus, $II\lesssim \|b\|_{\Lambda^\beta}M_{ts}(f)(x).$

To estimate III, we use Theorem \ref{lip} to obtain that for $I^*\subseteq I(x,r)$, we have
$$|b_{I^*}-b_I|\lesssim \|b\|_{\Lambda^\beta}r^\beta\frac{m_\lambda(I)}{m_\lambda(I^*)}. $$
By Proposition \ref{t:RieszCZ} (c),
\begin{align*}
&|\riz((b-b_I)f_2)(y)-\riz((b-b_I)f_2)(x)|\\
&=\bigg|\int_{\mathbb R_+} (\riz(y,z)-\riz(x,z))(b(z)-b_I)f_2(z)dm_\lambda(z) \bigg|\\
&\lesssim\int_{(2I)^c} \frac{|x-y|}{|x-z|}\frac1{m_\lz(I(x, |x-z|))}|b(z)-b_I| |f(z)|dm_\lambda(z) \\
&\lesssim\sum_{j=1}^\infty \int_{2^jr<|x-z|\le 2^{j+1}r}  2^{-j}\frac1{m_\lz(I(x, |x-z|))}|b(z)-b_I| |f(z)|dm_\lambda(z) \\
&\lesssim\sum_{j=1}^\infty \frac1{m_\lambda(I(x,2^jr))}
      \int_{|x-z|\le 2^{j+1}r}  2^{-j}    \big(|b(z)-b_{2^jI}|+|b_{2^jI}-b_I|\big)|f(z)|dm_\lambda(z) \\
&\lesssim\sum_{j=1}^\infty \frac{2^{-j}}{m_\lambda(I(x,2^jr))}\int_{|x-z|\le 2^{j+1}r} 
          |b(z)-b_{2^jI}||f(z)|dm_\lambda(z)\\
 &\qquad + \sum_{j=1}^\infty2^{-j}\|b\|_{\Lambda^\beta} (2^jr)^\beta M(f)(x)j \\
 &\lesssim\Big(\|b\|_{\Lambda^\beta}r^\beta M_{ts}(f)(x) + \|b\|_{\Lambda^\beta}r^\beta M(f)(x)\Big) \sum_{j=1}^\infty 2^{-j(1-\beta)}j,
\end{align*}
where the last inequality follows as \eqref{norm}.
Thus, $III\lesssim\|b\|_{\Lambda^\beta}M_{ts}(f)(x) + \|b\|_{\Lambda^\beta}M(f)(x)$.

Putting these estimates together, we obtain
\begin{align*}
&\frac1{m_\lambda(I) r^\beta}\int_I |[b,\riz]f(y)-([b,\riz]f)_I|dm_\lambda(y) \\
&\qquad \lesssim \|b\|_{\Lambda^\beta}\big(M_t(\riz f)(x) + M_{ts}(f)(x) + M(f)(x)\big).
\end{align*}
Taking the supremum over all $I(x, r)$, then $L^p$-norm of both sides, and using Theorem \ref{prop 3.1}
we conclude that
$$\|[b,\riz]f\|_{{\dot F}^{\beta,\infty}_{\lambda ,p}}\lesssim \|b\|_{\Lambda^\beta}\|f\|_{\lpz}.$$
Thus, (a) $\Rightarrow$ (b) is proved.

(b) $\Rightarrow$ (a): 

Assume the commutator $[b,\riz]$ is a bounded operator from $\lpz$ to ${\dot F}^{\beta,\infty}_{\lambda ,p}(\mathbb{R}_+)$.
 For $I=I(x_0,r)$ with $x_0\in \mathbb R_+$ and $r>0$, we consider
$$
\frac 1{r^{\beta}m_\lambda(I)}\int_I |f(x)-f_I| dm_\lambda(x). 
$$

\begin{defn}\label{mfb}
Let $f$ be finite almost everywhere on $\mathbb R_+$. For $I\subseteq \mathbb R_+$ 
with $m_\lambda (I)<\infty$, we define a median value $m_f(I)$ of $f$ over $I$ to be a real number
satisfying
$$m_\lambda(\{x\in I : f(x)>m_f(I)\})\le \frac12m_\lambda (I)\qquad\mbox{and}\qquad 
m_\lambda(\{x\in I : f(x)<m_f(I)\})\le \frac12m_\lambda (I).$$
\end{defn}
Note that
\begin{align*}
[b, \riz]f(x)&=b(x)\riz f(x)-\riz(bf)(x)\big)=\int_{\mathbb R_+} (b(x)-b(y))\riz (x,y)f(y)dm_\lambda(y).
\end{align*}
Observe that if $r > x_0$, then
$I(x_0,r)=(x_0-r, x_0+r)\cap {\mathbb R}_+ =I(\frac{x_0+r}2,\frac{x_0+r}2).$ 
Therefore, without loss of generality, we may assume that $r \le x_0$.
%Let $K_2=(k_0+k_1+1)/(2k_0-2)$, where $K_0, K_1$ are in Proposition \ref{t:RieszCZ}. 
We next consider the following two cases.

Case (1): $x_0\le 2r$. In this case, $m_\lambda (I)\sim x_0^{2\lambda}r\sim x_0^{2\lambda+1}$.
Let $\tilde x_0=x_0+2(K_1-1)r$, where $K_1$ is as in Proposition \ref{t:RieszCZ}. 
Then $K_1x_0\le \tilde x_0\le (2K_1+1)x_0$.
By applying Proposition \ref{t:RieszCZ} (f),
$$\riz (x_0,\tilde x_0)\gs \frac{x_0}{\tilde x_0^{2\lambda+2}}\sim \frac1{m_\lambda (I)}.$$
By \cite [Proposition 5.1]{dgklwy}, for every $x\in I$ and $y\in \widetilde I=I(\tilde x_0, r)$,  we have
$$|\riz(x,y)|\gs  \frac1{m_\lambda (I)}.$$

Case (2): $x_0> 2r$. In this case, $m_\lambda (I)\sim x_0^{2\lambda}r$. Let $\tilde x_0=x_0+2(K_0-1)r$, where $K_0$ is as in Proposition \ref{t:RieszCZ}. 
Then $x_0\le  \tilde x_0\le K_0x_0$.
By Proposition \ref{t:RieszCZ} (d), we get
$$\riz (x_0,\tilde x_0)\gs \frac1{\tilde x_0^{2\lambda}(\tilde x_0-x_0)}\sim \frac1{m_\lambda (I)}.$$
Again by \cite [Proposition 5.1]{dgklwy}, for every $x\in I$ and $y\in \widetilde I=I(\tilde x_0, r)$,  we have
$$|\riz(x,y)|\gs   \frac1{m_\lambda (I)}.$$

Set
 $$E_1:=\{y\in \widetilde I : b(y)\le m_b(\widetilde I)\}\qquad\mbox{and}\qquad 
    E_2:=\{y\in \widetilde I : b(y)\ge m_b(\widetilde I)\}$$
and further define
$$I_1:=\{y\in I : b(y)\ge m_b(\widetilde I)\}\qquad\mbox{and}\qquad 
    I_2:=\{y\in I : b(y)< m_b(\widetilde I)\}.$$
Definition \ref{mfb} shows that $m_\lambda(E_i)\ge \frac12m_\lambda(\widetilde I), i=1,2.$ 
Moreover, for $(x,y)\in I_i\times E_i, i=1,2,$
\begin{align*}
|b(x)-b(y)|&=|b(x)-m_b(\tilde I)+m_b(\widetilde I)-b(y)|\\
   &=|b(x)-m_b(\widetilde I)|+|m_b(\widetilde I)-b(y)|\ge |b(x)-m_b(\widetilde I)|.
\end{align*}
Hence, we have the following facts:
\begin{equation}\label{eq 3.2}
\begin{aligned}
& \mbox{(i) } I=I_1\cup I_2, \widetilde I= E_1\cup E_2 
    \mbox{ and }m_\lambda(E_i)\ge \frac12m_\lambda(\widetilde I), i=1,2;\\
& \mbox{(ii) } b(x)-b(y) \mbox{ does not change sign for all }(x,y)\in I_i\times E_i, i=1,2; \\
&\mbox{(iii) } |b(x)-m_b(\tilde I)|\le |b(x)-b(y)| \mbox{ for all }(x,y)\in I_i\times E_i, i=1,2.
\end{aligned}
\end{equation}
We also have that, for $(x,y)\in I_i\times E_i, i=1,2$, 
$$|\riz (x,y)|\gs    \frac1{m_\lambda (I)} \sim \frac 1{m_\lambda(I(\tilde x_0,r))}.$$

Choose a $C^\infty$ function $a$ with the following properties:
\begin{itemize}
\item [(1)] supp $a \subseteq I(x_0, 3r)$, $\int_{\mathbb R_+} a(x)\,dm_\lambda(x)=0$, $a(x)=1$ on $I$, and $|a(x)|\lesssim1 $;
\item[(2)]  $\|a\|_{\dot F^{-\beta, 1}_{\tbz ,p'}(\mathbb{R}_+)}\lesssim r^\beta m_\lambda(I)^{1/p'}$.
\end{itemize}

We provide a proof for the existence of such a function $a$. That is, if $a$ satisfies the conditions in (1) above, then we have the norm estimate in (2).

%\begin{lem}\label{defference q=1}
%For $0<\beta<1<p<\infty$, we have
%$$\|a\|_{{\dot F}^{-\beta,1}_{\tbz ,p'}}\lesssim \bigg\|\sum_{k\in \mathbb Z} \frac {\delta^{k\beta}}{m_\lambda(I(\cdot, \delta^{k}))}\int_{I(\cdot, \delta^{k})} |a(\cdot)-a(y)|dm_\lambda(y)\bigg\|_{{L^{p'}(\rrp,\, dm_\lz)}}.$$ 
%\end{lem}
%\begin{proof}
Recall that $(\mathbb R_+, |\cdot|, dm_\lambda)$ is the space of homogeneous type. 
 Let $\{\overline S_k\}_{k\in \Bbb Z}$ be Coifman's approximation to the identity.
 Then the kernels $\overline S_k(x,y)$ of $\overline S_k$ satisfy the following properties:
\begin{enumerate}
    \item[(i)] $\overline S_k(x,y)=\overline S_k(y,x);$
    \item[(ii)] $\overline S_k(x,y)=0$ if $|x-y|> c2^{-k}$\ \ and\ \ $\displaystyle |\overline S_k(x,y)|\lesssim \frac{1}{V_k(x)+V_k(y)};$
    \item[(iii)] $\displaystyle |\overline S_k(x,y)-\overline S_k(x',y)|\lesssim \frac{ 2^k|x-x'|}{V_k(x)+V_k(y)}$\qquad for $|x-x'|\leqslant  c_12^{-k};$\\[3pt]
    \item[(iv)] $\displaystyle |\overline S_k(x,y)-\overline S_k(x,y')|\lesssim \frac{ 2^k|y-y'|}{V_k(x)+V_k(y)}$\qquad for $|y-y'|\leqslant  c_12^{-k};$
    \item[(v)] $\displaystyle \big|[\overline S_k(x,y)-\overline S_k(x',y)]-[\overline S_k(x,y')-\overline S_k(x',y')]\big|\lesssim \frac{2^k|x-x'|2^k|y-y'|}{V_k(x)+V_k(y)}$
    \item[] for $|x-x'|\leqslant  c_12^{-k}$ and $|y-y'|\leqslant  c_12^{-k};$
    \item[(vi)] $\displaystyle \int_{\mathbb R_+} \overline S_k(x,y)dm_\lambda(x)=1\qquad \text{for all}\ y\in \mathbb R_+;$
    \item[(vii)] $\displaystyle \int_{\mathbb R_+} \overline S_k(x,y)dm_\lambda=1\qquad \text{for all}\ x\in \mathbb R_+.$
\end{enumerate}
 Here  $V_k(x)$ denotes the measure
$m_\lambda(I(x, 2^{-k}))$ for $k\in \Bbb Z$ and $x\in \mathbb R_+$.   And $V(x,y):=m_\lambda(I(x,|x-y|))$ for $x,y\in \mathbb R_+$.

Set ${\overline D}_k := {\overline S}_k - {\overline S}_{k-1}$.    
Note that ${\dot F}^{-\beta,1}_{\lambda ,p'}(\rrp,\, dm_\lz)$ is equivalent to the Triebel--Lizorkin space
${\dot F}^{-\beta,1}_{p',CW}(\rrp,\, dm_\lz)$ on spaces of homogeneous type $(\mathbb R_+, |\cdot|, dm_\lambda)$.  Thus, we have
$$\|a\|_{{\dot F}^{-\beta,1}_{\lambda ,p'}}\sim \|a\|_{{\dot F}^{-\beta,1}_{p',CW}}
:= \Big\|\Big\{\sum_{k\in \mathbb Z}\delta^{k\beta}|\overline D_k(a)|\Big\}\Big\|_{{L^{p'}(\rrp,\, dm_\lz)}}.$$

{
Choosing $k_0\in \mathbb Z$ with $2^{-k_0}\sim r$, we obtain
\begin{align*}
&\Big\|\Big\{\sum_{k\in \mathbb Z}2^{-k\beta}|\overline D_k(a)|\Big\}\Big\|_{{L^{p'}(\rrp,\, dm_\lz)}}\\
&\le \Big\|\Big\{\sum_{k=k_0+1}^\infty2^{-k\beta}|\overline D_k(a)|\Big\}\Big\|_{{L^{p'}(\rrp,\, dm_\lz)}}
       + \Big\|\Big\{\sum_{k=-\infty}^{k_0}2^{-k\beta}|\overline D_k(a)|\Big\}\Big\|_{{L^{p'}(\rrp,\, dm_\lz)}}\\
 &=:I_1+I_2.
\end{align*}
Assume that $k>k_0$, $k\in \mathbb N$. Since $\overline S_k(x,y)=0$ for $|x-y|> c2^{-k}$, we have that 
$\overline D_k(x,y)$ is supported in $|x-y|\leq c2^{-k+1}$.
Note also that  supp $a\subseteq I(x_0,3r)$, we have that 
$$\overline D_k(a)(x) = \int_{I(x_0,3r)}\overline D_k(x,y)a(y)dm_\lz(y)$$
is supported in $I(x_0,Cr)$ for some absolute positive constant $C$. 
Moreover, we also have
$$|\overline D_k(a)(x)| \leq \int_{I(x_0,3r)}|\overline D_k(x,y)||a(y)|dm_\lz(y) \lesssim \int_{I(x_0,3r)}|\overline D_k(x,y)|dm_\lz(y)\lesssim1.$$
Thus,
\begin{align*}
I_1     &\le \Bigg(\int_{I(x_0,Cr)}\bigg\{\sum_{k=k_0+1}^\infty2^{-k\beta}\big|\overline D_k(a)(x)\big|\bigg\}^{p'}dm_\lambda(x)\Bigg)^{1/p'}\\
&\lesssim \Bigg(\int_{I(x_0,Cr)}\bigg\{\sum_{k=k_0+1}^\infty2^{-k\beta}\bigg\}^{p'}dm_\lambda(x)\Bigg)^{1/p'}\\
&\lesssim r^\beta m_\lambda(I)^{1/p'}.
\end{align*}

To estimate $I_2$, we use the vanishing moment condition of $a(x)$ and smoothness $\overline D_k$ to get
\begin{align*}
I_2&\le  \sum_{k=-\infty}^{k_0}\Big\|2^{-k\beta}|\overline D_k(a)|\Big\|_{{L^{p'}(\rrp,\, dm_\lz)}}\\
    &\le \sum_{k=-\infty}^{k_0}\bigg(\int_{I(x_0,C\delta^k)}\bigg|2^{-k\beta}\int_{I(x_0,3r)} \Big(\overline D_k(x,y)-\overline D_k(x,x_0)\Big)a(y)dm_\lambda(y)\bigg|^{p'} dm_\lambda(x)\Bigg)^{1\over p'}\\
     &\lesssim \sum_{k=-\infty}^{k_0}\bigg(\int_{I(x_0,C\delta^k)}\bigg|2^{-k\beta}\int_{I(x_0,3r)} \frac{|y-x_0|2^k}{m_\lz(I(y,2^{-k})}dm_\lambda(y)\bigg|^{p'} dm_\lambda(x)\Bigg)^{1\over p'}\\
      &\lesssim \sum_{k=-\infty}^{k_0}\bigg(\int_{I(x_0,C2^{-k})}\bigg|2^{-k\beta}\int_{I(x_0,3r)} \frac{r2^k}{m_\lz(I(x_0,2^{-k})}dm_\lambda(y)\bigg|^{p'} dm_\lambda(x)\Bigg)^{1\over p'}\\
      &\lesssim \sum_{k=-\infty}^{k_0} r2^{-k(\beta-1)}\Big(\frac{m_\lz(I(x_0,r))}{m_\lz(I(x_0,2^{-k})}\Big
      )^{1-1/p'}m_\lz(I(x_0,r))^{1/p'}\\
      &\lesssim \sum_{k=-\infty}^{k_0} r2^{-k(\beta-1)}m_\lz(I(x_0,r))^{1/p'}\\
      &\lesssim r^\beta m_\lambda(I)^{1/p'}.
\end{align*}

}

We now continue with our proof.
Let $f_i=\chi_{E_i}, i=1,2.$ 
Then \eqref{eq 3.2} gives
\begin{align*}
&\frac1{r^{\beta}m_\lambda(I)}\sum_{i=1}^2\bigg|\int_I [b, \riz](f_i)(x) a(x) dm_\lambda(x)\bigg|\\
 &\ge \frac1{r^{\beta}m_\lambda(I)}\sum_{i=1}^2\bigg|\int_{I_i} [b, \riz](f_i)(x) a(x) dm_\lambda(x)\bigg| \\
 &= \frac1{r^{\beta}m_\lambda(I)}\sum_{i=1}^2\int_{I_i}\int_{E_i} |b(x)-b(y)||\riz(x,y)| |a(x)| dm_\lambda(y)dm_\lambda(x)\\
 &\gtrsim  \frac1{r^{\beta}m_\lambda(I)}\sum_{i=1}^2\int_{I_i}|b(x)-m_b(\widetilde I)|\frac 1{m_\lambda(I(\tilde x_
     0,r))}\int_{E_i} dm_\lambda(y)dm_\lambda(x)\\
  &\gtrsim  \frac1{r^{\beta}m_\lambda(I)}\sum_{i=1}^2\int_{I_i}|b(x)-m_b(\widetilde I)|dm_\lambda(x)\\
 &\gtrsim \frac 1{r^{\beta}m_\lambda(I)}\int_I |b(x)-b_I| dm_\lambda(x).
\end{align*}

On the other hand, from duality and the boundedness of $[b, \riz]$,
we deduce that
\begin{align*}
&\frac1{r^{\beta}m_\lambda(I)}\sum_{i=1}^2\bigg|\int_I [b, \riz](f_i)(x)a(x)dm_\lambda(x)\bigg|\\
&\lesssim \frac1{r^{\beta}m_\lambda(I)}\sum_{i=1}^2  \|[b, \riz]f_i\|_{ \dot{F}_{\lambda ,p}^{\beta,\infty}}
 \|a\|_{ \dot{F}_{\lambda ,p'}^{-\beta,1}} \\
&\lesssim \frac1{r^{\beta}m_\lambda(I)}\sum_{i=1}^2  \|[b, \riz]\|_{\lpz\to \dot{F}_{\lambda ,p}^{\beta,\infty}}\|f_i\|_{\lpz} \|a\|_{ \dot{F}_{\lambda ,p'}^{-\beta,1}} \\
&\lesssim \frac1{r^{\beta}m_\lambda(I)} \|[b, \riz]\|_{\lpz\to \dot{F}_{\lambda ,p}^{\beta,\infty}}m_\lambda(\widetilde I)^{1/p}r^\beta m_\lambda(I)^{1/p'}\\
&\lesssim \|[b, \riz]\|_{\lpz\to \dot{F}_{\lambda ,p}^{\beta,\infty}}.
\end{align*}
Therefore, we have
$$\frac 1{r^{\beta}m_\lambda(I)}\int_I |b(x)-b_I| dm_\lambda(x)\lesssim \|[b, \riz]\|_{\lpz\to \dot{F}_{\lambda ,p}^{\beta,\infty}}.$$ 
Hence, (b) $\Rightarrow$ (a) is done.
The proof is complete.
\end{proof}
%\bigskip
%\bigskip

%%%%%%%%%%%%%%%%%%%%%%%%%%%%%%%%%%%%%%%%%%%
%%%%%%%%%%%%%%%%%%%%%%%%%%%%%%%%%%%%%%
%%%%%%%%%%%%%%%%%%%%%%%%%%%
\section{Endpoint Results}

\begin{proof}[Proof of Theorem \ref{thm bmo4.1}]
Suppose that $f\in L^\infty_c(\mathbb R_+, dm_\lambda)$. Since $f\in L^2(\mathbb R_+, dm_\lambda)$, we have
$[b, \riz]\in L^2(\mathbb R_+, dm_\lambda)$. Let $I$ be an interval in $(0,\infty)$. Decompose $f=f_1+f_2$, 
where $f_1=f\chi_{4I}$. As in \cite[page 680]{HST} we write
\begin{align*}
&[b, \riz]f(x)-([b, \riz]f)_I\\
&=\sigma_1(x)-(\sigma_1)_I+\sigma_2(x,u)+\sigma_4(x,u)-(\sigma_2(\cdot,u))_I+(\sigma_3(x,\cdot))_I,\ x,u\in I,
\end{align*}
where
\begin{align*}
\sigma_1(x)&=[b, \riz]f_1(x),\ x\in I;\\
\sigma_2(x,u)&=(b(x)-b_I)(\riz(f_2)(x)-\riz(f_2)(u)),\ x,u\in I;\\
\sigma_3(x,u)&=\riz((b-b_I)f_2)(u)-\riz((b-b_I)f_2)(x),\ x,u\in I;
\end{align*}
and $$\sigma_4(x,u)=(b(x)-b_I)[b, \riz]f_2(u),\ x\in I.$$
Since $[b, \riz]$ is bounded from $\ltz$ into itself, we obtain
\begin{align*}
\frac1{m_\lambda(I)}\int_I |\sigma_1(x)|dm_\lambda(x)
&\le \bigg(\frac1{m_\lambda(I)}\int_I |\sigma_1(x)|^2dm_\lambda(x)\bigg)^{1/2} \\
&\lesssim\frac1{(m_\lambda(I))^{1/2}}\bigg(\int_{4I} |f(x)|^2dm_\lambda(x)\bigg)^{1/2}\|b\|_{{\rm BMO}(\mathbb R_+, dm_\lambda)}\\
&\lesssim\|f\|_{L^\infty(\mathbb R_+, dm_\lambda)}\|b\|_{{\rm BMO}(\mathbb R_+, dm_\lambda)}\bigg(\frac{m_\lambda(4I)}{m_\lambda(I)}\bigg)^{1/2}\\
&\lesssim\|f\|_{L^\infty(\mathbb R_+, dm_\lambda)}\|b\|_{{\rm BMO}(\mathbb R_+, dm_\lambda)}.
\end{align*}
According to Proposition \ref{t:RieszCZ} (c) and using the doubling property of $m_\lambda$, we get
\begin{align*}
|\riz(f_2)(x)-\riz(f_2)(u)|
&\lesssim\sum_{j=2}^\infty\int_{(2^{j+1}I)\setminus (2^jI)} \frac{|x-u|}{|x-y|}\frac{|f(y)|}{m_\lambda(B(x,|x-y|))}dm_\lambda(y)\\
&\lesssim\sum_{j=2}^\infty \frac{m_\lambda(2^{j+1}I)}{2^jm_\lambda(B(x,2^jr_0))}\|f\|_{L^\infty(\mathbb R_+, dm_\lambda)}\\
&\lesssim\|f\|_{L^\infty(\mathbb R_+, dm_\lambda)}\sum_{j=2}^\infty2^{-j}\\
&\lesssim\|f\|_{L^\infty(\mathbb R_+, dm_\lambda)},\ x,u\in I.
\end{align*}
Then, 
\begin{align*}
\frac1{m_\lambda(I)}\int_I |\sigma_2(x,u)|dm_\lambda(x)
&\le  C\|f\|_{L^\infty(\mathbb R_+, dm_\lambda)}\frac1{m_\lambda(I)}\int_I |b(x)-b_I|dm_\lambda(x)\\
&\lesssim\|f\|_{L^\infty(\mathbb R_+, dm_\lambda)}\|b\|_{{\rm BMO}(\mathbb R_+, dm_\lambda)},\ u\in I.
\end{align*}
Using again Proposition \ref{t:RieszCZ} (c) we obtain that for $x,u\in I$,
\begin{align*}
|\sigma_3(x,u)|
&\le \int_{\mathbb R_+ \setminus (4I)} |b(y)-b_I||\riz(x,y)-\riz(u,y)|dm_\lambda(y)\|f\|_{L^\infty(\mathbb R_+, dm_\lambda)}\\
&\lesssim\sum_{j=2}^\infty\int_{(2^{j+1}I)\setminus (2^jI)}  |b(y)-b_I| \frac{|x-u|}{|x-y|}\frac{dm_\lambda(y)}{m_\lambda(B(x,|x-y|))}
                       \|f\|_{L^\infty(\mathbb R_+, dm_\lambda)}\\
&\lesssim \|f\|_{L^\infty(\mathbb R_+, dm_\lambda)}\sum_{j=2}^\infty 2^{-j}\frac1{m_\lambda(B(x,2^jr_0))}\int_{(2^{j+1}I)}|b(y)-b_I|dm_\lambda(y) \\
&\lesssim \|f\|_{L^\infty(\mathbb R_+, dm_\lambda)}
           \sum_{j=2}^\infty 2^{-j}\frac1{m_\lambda(2^jI)}\int_{(2^{j+1}I)}|b(y)-b_I|dm_\lambda(y) \\
&\lesssim\|f\|_{L^\infty(\mathbb R_+, dm_\lambda)}\|b\|_{{\rm BMO}(\mathbb R_+, dm_\lambda)} \sum_{j=2}^\infty j2^{-j}\\
&\lesssim\|f\|_{L^\infty(\mathbb R_+, dm_\lambda)}\|b\|_{{\rm BMO}(\mathbb R_+, dm_\lambda)}.
\end{align*}
Then,
$$
\frac1{m_\lambda(I)}\int_I |\sigma_3(x,u)|dm_\lambda(x)
\lesssim\|f\|_{L^\infty(\mathbb R_+, dm_\lambda)}\|b\|_{{\rm BMO}(\mathbb R_+, dm_\lambda)},\ \  u\in I.
$$
We choose, for every interval $I \subseteq \mathbb R_+, u_I\in I$, we have that
\begin{align*}
&\bigg|\sup_{I \subseteq \mathbb R_+} \frac1{m_\lambda(I)}\int_I |[b, \riz](f)(x)-([b, \riz](f))_I|dm_\lambda(x) 
   - \sup_{I \subseteq \mathbb R_+} \frac1{m_\lambda(I)}\int_I |\sigma_4(x,u_I)|dm_\lambda(x)\bigg|\\
&\le \sup_{I \subseteq \mathbb R_+} \frac1{m_\lambda(I)}\int_I |\sigma_1(x)-(\sigma_1)_I+\sigma_2(x,u_I)-(\sigma_2(\cdot,u_I))_I+(\sigma_3(x,\cdot))_I|dm_\lambda(x) \\
&\lesssim\|f\|_{L^\infty(\mathbb R_+, dm_\lambda)}\|b\|_{{\rm BMO}(\mathbb R_+, dm_\lambda)}.
\end{align*}
We obtain that 
$\|[b, \riz](f)\|_{{\rm BMO}(\mathbb R_+, dm_\lambda)}\lesssim\|f\|_{L^\infty(\mathbb R_+, dm_\lambda)}\|b\|_{{\rm BMO}(\mathbb R_+, dm_\lambda)}$
if and only if,
\begin{equation}\label{H2}
\sup_{I \subseteq \mathbb R_+} \frac1{m_\lambda(I)}\int_I |\sigma_4(x,u_I)|dm_\lambda(x)
\lesssim\|f\|_{L^\infty(\mathbb R_+, dm_\lambda)}\|b\|_{{\rm BMO}(\mathbb R_+, dm_\lambda)}
\end{equation}
for some $u_I\in I$.

Estimate \eqref{H2} can be written as
\begin{align*}
&\sup_{I \subseteq \mathbb R_+} \frac1{m_\lambda(I)}\int_I |b(y)-b_I|dm_\lambda(y)
\bigg|\int_{\mathbb R_+ \setminus (4I)} \riz(u_I, z)f(z)dm_\lambda(z) \bigg|\\
&\lesssim\|f\|_{L^\infty(\mathbb R_+, dm_\lambda)}\|b\|_{{\rm BMO}(\mathbb R_+, dm_\lambda)}
\end{align*}
for some $u_I\in I$.

Let $K_0$ be the constant appearing in Proposition \ref{t:RieszCZ}. 
Write 
\begin{align*}
&\int_{\mathbb R_+ \setminus (4I)} \riz(u_I,z)f(z)dm_\lambda(z)\\
&=\bigg(\int_{\mathbb R_+ \setminus (4I) \atop {K_0^{-1}u_I\le z\le K_0 u_I}}
      +\int_{\mathbb R_+ \setminus (4I) \atop {0< z\le K_0^{-1} u_I}}
      +\int_{\mathbb R_+ \setminus (4I) \atop {K_0 u_I< z}} \bigg)   \riz(u_I,z)f(z)dm_\lambda(z)\\
&=: J_1(I)+J_2(I)+J_3(I),\qquad I\subseteq \mathbb R_+\mbox{ interval and }u_I\in I.
\end{align*}
According to Proposition \ref{t:RieszCZ} (a), we get, for every interval $I\subseteq \mathbb R_+$
and $u_I\in I$,
$$|J_2(I)|\lesssim\int_0^{K_0^{-1}u_I} \frac{dm_\lambda(z)}{u_I^{2\lambda+1}}
      \|f\|_{L^\infty(\mathbb R_+, dm_\lambda)}
      \lesssim\|f\|_{L^\infty(\mathbb R_+, dm_\lambda)}$$
and
$$|J_3(I)|\lesssim\int_{K_0 u_I}^\infty \frac{dm_\lambda(z)}{u_I^{2\lambda+2}}
      \|f\|_{L^\infty(\mathbb R_+, dm_\lambda)}
      \lesssim\|f\|_{L^\infty(\mathbb R_+, dm_\lambda)}.$$
It follows that
\begin{align*}
&\bigg|\sup_{I\subseteq \mathbb R_+} \frac1{m_\lambda(I)}\int_I |b(y)-b_I|dm_\lambda(y)
\bigg|\int_{\mathbb R_+ \setminus (4I)} \riz(u_I,z)f(z)dm_\lambda(z)\bigg|\\
&\qquad-\sup_{I\subseteq \mathbb R_+} \frac1{m_\lambda(I)}\int_I |b(y)-b_I|dm_\lambda(y)
\bigg|\int_{\mathbb R_+ \setminus (4I) \atop {K_0^{-1}u_I\le z\le K_0 u_I}} \riz(u_I,z)f(z)dm_\lambda(z)\bigg|\bigg|\\
&\lesssim\|f\|_{L^\infty(\mathbb R_+, dm_\lambda)}\|b\|_{{\rm BMO}(\mathbb R_+, dm_\lambda)},
\end{align*}
where $u_I\in I$, for every interval $I\subseteq \mathbb R_+$.

We deduce that $\|[b, \riz](f)\|_{{\rm BMO}(\mathbb R_+, dm_\lambda)}\lesssim\|f\|_{L^\infty(\mathbb R_+, dm_\lambda)}\|b\|_{{\rm BMO}(\mathbb R_+, dm_\lambda)}$
if and only if 
\begin{equation}\label{eq H3}
\begin{aligned}
&\sup_{I\subseteq \mathbb R_+} \frac1{m_\lambda(I)}\int_I |b(y)-b_I|dm_\lambda(y)
\bigg|\int_{\mathbb R_+ \setminus (4I) \atop {K_0^{-1}u_I\le z\le K_0 u_I}} \riz(u_I,z)f(z)dm_\lambda(z)\bigg|\\
&\lesssim\|f\|_{L^\infty(\mathbb R_+, dm_\lambda)}\|b\|_{{\rm BMO}(\mathbb R_+, dm_\lambda)},
\end{aligned}
\end{equation}
where $u_I\in I$, for every interval $I\subseteq \mathbb R_+$.

Note that if \eqref{eq H3} holds for some family $\{u_I : I\subseteq \mathbb R_+\}$, then 
\eqref{eq H3} holds for every family $\{u_I : I\subseteq \mathbb R_+\}$.

Suppose that $I=(0,2x_0)$, with $x_0>0$. Then, $4I=(0,5x_0)$. We have that
$(\mathbb R_+ \setminus (4I))\cap \{z\in \mathbb R_+: K_0^{-1}u_I\le z\le K_0 u_I\}=\varnothing$
for every $u_I\in I$. 

We {now} consider $I=(x_0-r_0, x_0+r_0)$ with $0<r_0<x_0\le 4x_0$.
Then, $4I=(0, x_0+4r_0)$. We take $u_I=x_0$ and observe that
$$\varnothing\ne (\mathbb R_+ \setminus (4I))\cap \{z\in \mathbb R_+: K_0^{-1}u_I\le z\le K_0 u_I\}
=[x_0+4r_0, K_0x_0]$$
if and only if $x_0+4r_0\le K_0x_0.$ 

According to \cite[(1.3)]{yy}, we get
\begin{align*}
|J_1(I)||
&\lesssim\|f\|_{L^\infty(\mathbb R_+, dm_\lambda)}\int_{x_0+4r_0}^{K_0x_0} 
      \frac{dm_\lambda(z)}{m_\lambda(B(z,|x_0-z|))} \\
&\lesssim\|f\|_{L^\infty(\mathbb R_+, dm_\lambda)}\int_{x_0+4r_0}^{K_0x_0} \frac{dz}{z-x_0}\\
&\lesssim\|f\|_{L^\infty(\mathbb R_+, dm_\lambda)}\log\Big(\frac{(K_0-1)x_0}{4r_0}\Big)\\
&\lesssim\|f\|_{L^\infty(\mathbb R_+, dm_\lambda)}.
\end{align*}
Since $b\in {\rm BMO}(\mathbb R_+)$, the inequality \eqref{eq H3} holds if and only if
\begin{align*}
&\sup_{I=(x_0-r_0,x_0+r_0) \atop 0<4r_0<x_0} \frac1{m_\lambda(I)}\int_I |b(y)-b_I|dm_\lambda(y)
\bigg|\int_{\mathbb R_+ \setminus (4I) \atop {K_0^{-1}u_I\le z\le K_0 u_I}} \riz(u_I,z)f(z)dm_\lambda(z)\bigg|\\
&\qquad\lesssim\|f\|_{L^\infty(\mathbb R_+, dm_\lambda)}\|b\|_{{\rm BMO}(\mathbb R_+, dm_\lambda)}.
\end{align*}
Suppose that $I=(x_0-r_0, x_0+r_0)$ with $0<4r_0<x_0$. If $K_0x_0>x_0+4r_0$ we get
\begin{align*}
\bigg|\int_{x_0+4r_0}^{K_0x_0} \riz(x_0,z)f(z)dm_\lambda(z) \bigg|
&\lesssim\|f\|_{L^\infty(\mathbb R_+, dm_\lambda)}\log\Big(\frac{(K_0-1)x_0}{4r_0}\Big)\\
&\lesssim\|f\|_{L^\infty(\mathbb R_+, dm_\lambda)}\log\Big(\frac{x_0}{r_0}\Big).
\end{align*} 
If $K_0^{-1}x_0<x_0-4r_0$, then
$$\bigg|\int_{K_0^{-1}x_0}^{x_0-4r_0} \riz(x_0,z)f(z)dm_\lambda(z) \bigg|
\lesssim\|f\|_{L^\infty(\mathbb R_+, dm_\lambda)}\log\Big(\frac{x_0}{r_0}\Big).$$
We deduce that $[b, \riz]$ is bounded from $L^\infty(\mathbb R_+, dm_\lambda)$ into
${\rm BMO}(\mathbb R_+, dm_\lambda)$ provided that $b\in {\rm BMO}(\mathbb R_+, dm_\lambda)$
and
\begin{equation}\label{eq H4}
\sup_{I=(x_0-r_0,x_0+r_0) \atop 0<4r_0<x_0} \log\Big(\frac{x_0}{r_0}\Big)\frac1{m_\lambda(I)}\int_I |b(y)-b_I|dm_\lambda(y)<\infty.
\end{equation}
Suppose now that $I=(x_0-r_0, x_0+r_0)$ with $0<4r_0<x_0$ such that
$$ (\mathbb R_+ \setminus (4I))\cap \{z\in \mathbb R_+: K_0^{-1}u_I\le z\le K_0 u_I\}\ne \varnothing.$$
We have that $r_0<(K_0-1)x_0/4$. According to Proposition \ref{t:RieszCZ} (d), we deduce that
\begin{align*}
&\bigg|\int_{\mathbb R_+ \setminus (4I)} \riz(x_0,z)\chi_{(x_0+4r_0, K_0x_0)}(z)dm_\lambda(z)\bigg|\\
&\quad\ge C\int_{x_0+4r_0}^{K_0x_0} \frac1{(x_0z)^\lambda(z-x_0)} z^{2\lambda}dz\\
&\quad\ge C\log\Big(\frac{(K_0-1)x_0}{4r_0}\Big)\\
&\quad\ge C \log\Big(\frac{x_0}{r_0}\Big).
\end{align*}
It follows that if $b\in {\rm BMO}(\mathbb R_+, dm_\lambda)$ and if  $[b, \riz]$ is bounded from $L^\infty(\mathbb R_+, dm_\lambda)$ into
${\rm BMO}(\mathbb R_+, dm_\lambda)$, then \eqref{eq H4} holds.
\end{proof}

\begin{proof}[Proof of Theorem \ref{thm bmo4.2}]
Suppose that $a$ is a $(1, 2)_{\tbz }$-atom with supp $a\subseteq I=(x_0-r_0,x_0+r_0)$
where $0<r_0\le x_0$. Since $b\in {\rm BMO}(\mathbb R_+, m_\lambda), 
b\in L^1(\mathbb R_+, m_\lambda)$ and $[b, \riz]$ is well defined. We can write (see \cite[pages 688]{HST})
\begin{align*}
[b, \riz](a)(x)
&=\chi_{4I}(x)[b, \riz](a)(x) + \chi_{\mathbb R_+ \setminus (4I)}(b(x)-b_I)\riz(a)(x) \\
&\quad +\chi_{\mathbb R_+ \setminus (4I)}(x)\riz(a(b-b_I))(x) \\
&=\chi_{4I}(x)[b, \riz](a)(x) + \chi_{\mathbb R_+ \setminus (4I)}(b(x)-b_I)\riz(a)(x) \\
&\quad + \chi_{\mathbb R_+ \setminus (4I)}(x)\int_I (\riz(x,y)-\riz(x,u))(b(y)-b_I)a(y)dm_\lambda(y)\\
&\quad+ \chi_{\mathbb R_+ \setminus (4I)}(x)-\riz(x,u)\int_I (b(y)-b_I)a(y)dm_\lambda(y) \\
&=:T_1(a)(x)+T_2(a)(x)+T_3(a)(x,u)+T_4(a)(x,u),\ u\in I \mbox{ and }x\in \mathbb R_+. 
\end{align*}
Since $[b, \riz]$ is bounded from $\ltz$ into itself, it follows that
\begin{align*}
\|T_1(a)\|_{L^1(\mathbb R_+, m_\lambda)} 
&=\int_{4I} |[b, \riz](a)(x)|dm_\lambda(x) 
      \le (m_\lambda(4I))^{1/2}\|[b, \riz](a)\|_{L^2(\mathbb R_+, dm_\lambda)} \\
&\lesssim(m_\lambda(4I))^{1/2}\|a\|_{L^2(\mathbb R_+, dm_\lambda)} \|b\|_{{\rm BMO}(\mathbb R_+, dm_\lambda)}\\
&  \lesssim\|b\|_{{\rm BMO}(\mathbb R_+, dm_\lambda)}.
\end{align*}
For any $u\in I$ we have that
$$T_2(a)(x)=\chi_{\mathbb R_+ \setminus (4I)}(x)(b(x)-b(y))
\int_I (\riz(x,z)-\riz(x,u))a(y)dm_\lambda(y),\ x\in  \mathbb R_+.$$
According to Proposition \ref{t:RieszCZ} (c) and taking $u\in I$, we obtain
\begin{align*}
\|T_2(a)\|_{L^1(\mathbb R_+, dm_\lambda)} 
&\le \int_{\mathbb R_+ \setminus (4I)} |b(x)-b_I|\int_I |\riz(x,y)-\riz(x,u)||a(y)|dm_\lambda(y) dm_\lambda(x) \\
&\lesssim\int_{\mathbb R_+ \setminus (4I)} |b(x)-b_I|\int_I \frac{|y-u|}{|x-y|m_\lambda(I(x,|x-y|))}
      |a(y)|dm_\lambda(y) dm_\lambda(x)\\
&\lesssim r_0\sum_{j=2}^\infty \int_{(2^{j+1}I)\setminus(2^jI)} |b(x)-b_I|\int_I \frac{|a(y)|}{|x-y|m_\lambda(I(x,|x-y|))}
      dm_\lambda(y) dm_\lambda(x).
\end{align*}
For every $j\in \mathbb Z, j\ge 2$, $x\in (2^{j+1}I)\setminus(2^jI)$ and $y\in I$ we get
$$2^jI\subseteq I(y,2^{j+1}r_0)\subseteq I(y, 4|x-y|)\subseteq I(x, 5|x-y|),$$
and, since $m_\lambda$ is doubling,
$$m_\lambda(2^jI)\lesssim2^jm_\lambda(I(x, |x-y|)).$$
It follows that
\begin{align*}
\|T_2(a)\|_{L^1(\mathbb R_+, dm_\lambda)} 
&\lesssim r_0\sum_{j=2}^\infty \int_{(2^{j+1}I)} \frac{|b(x)-b_I|}{2^jr_0m_\lambda(2^jI)}
     dm_\lambda(x) \\
&\lesssim \sum_{j=2}^\infty \frac1{2^jr_0m_\lambda(2^jI)} \int_{(2^{j+1}I)}|b(x)-b_I|dm_\lambda(x)\\
&\lesssim \sum_{j=2}^\infty j2^{-j}\|b\|_{{\rm BMO}(\mathbb R_+, dm_\lambda)}\\
&\lesssim \|b\|_{{\rm BMO}(\mathbb R_+, dm_\lambda)}.
\end{align*}
By using again Proposition \ref{t:RieszCZ} (c) we can write
\begin{align*}
&\|T_3(a)(\cdot,u)\|_{L^1(\mathbb R_+, dm_\lambda)} \\
&\le \int_{\mathbb R_+ \setminus (4I)}\int_I |\riz(x,y)-\riz(x,u)||b(y)-b_I||a(y)|dm_\lambda(y) dm_\lambda(x)\\
&\le \sum_{j=2}^\infty \int_{(2^{j+1}I)\setminus(2^jI)}\int_I \frac{|y-u|}{|x-y|m_\lambda(I(x,|x-y|))}
  |b(y)-b_I||a(y)|dm_\lambda(y) dm_\lambda(x)\\    
&\lesssim r_0\sum_{j=2}^\infty \int_{(2^{j+1}I)}\int_I \frac1{2^jr_0m_\lambda(2^jI)}   |b(y)-b_I||a(y)|dm_\lambda(y) dm_\lambda(x)\\   
&\lesssim \sum_{j=2}^\infty \frac{m_\lambda(2^{j+1}I)}{2^jm_\lambda(2^jI)}\|b-b_I\|_{L^2(\mathbb R_+, dm_\lambda)}\|a\|_{\ltz}\\
&\lesssim \|b\|_{{\rm BMO}(\mathbb R_+, dm_\lambda)}, \ u\in I.
\end{align*}
Note that the above implied constants do not depend on $a$ and $u\in I$.
We have proved that, for every $u\in I$,
\begin{equation}\label{eq H6}
\|[b,\riz](a)-T_4(a)(\cdot,u)\|_{L^1(\mathbb R_+, dm_\lambda)} \lesssim \|b\|_{{\rm BMO}(\mathbb R_+, dm_\lambda)}.\end{equation}
According to \cite[Theorem 1.1]{YZ},  $[b, \riz]$ can be extended from span$\{(1, 2)_{\tbz }$-atom$\}$ to $H^1(\mathbb R_+, dm_\lambda)$ as a bounded operator from $H^1(\mathbb R_+, dm_\lambda)$ into 
$L^1(\mathbb R_+, dm_\lambda)$ if, and only if, there exists a positive constant such that, for every 
$(1, 2)_{\tbz }$-atom with supp $a\subseteq I$ such that
$$\|T_4(a)(\cdot,u)\|_{L^1(\mathbb R_+, dm_\lambda)} \lesssim 1,$$
equivalently, since $\int_I a(x) dm_\lambda(x)=0$,
\begin{equation}\label{eq H7}
\bigg| \int_{\mathbb R_+ \setminus (4I)} \riz(x,u) dm_\lambda(x)\bigg|
\bigg|\int_I b(y)a(y)dm_\lambda(y)\bigg|\lesssim 1.
\end{equation}
Note that by using \eqref{eq H6} we can see that in the above property we can change 
``there exists $u\in I$" by ``for every $u\in I$". In the sequel we fix $u=x_0$, where $I=(x_0-r_0,x_0+r_0)$ with $0<r_0\le x_0$.

According to Proposition \ref{t:RieszCZ} (a), and \cite[(1.4)]{yy} we obtain
$$|\riz(x,z)|\lesssim \frac 1{m_\lambda(B(x,|x-z|))}\lesssim \frac 1{m_\lambda(B(z,|x-z||))}
      \lesssim \frac 1{z^{2\lambda}|x-z|},\ x,z\in \mathbb R_+.$$
If $x_0>4r_0$, then
$$\bigg|\int_0^{x_0-4r_0} \riz(x,x_0)dm_\lambda(x)\bigg|
    \lesssim \int_0^{x_0-4r_0} \frac{x^{2\lambda}}{x_0^{2\lambda}|x-x_0|}dx\lesssim \frac{(x_0-4r_0)^{2\lambda+1}}{x_0^{2\lambda}r_0}.$$
If $K_1x_0>x_0+4r_0$, it follows that
$$\bigg|\int_{x_0+4r_0}^{K_1x_0} \riz(x,x_0)dm_\lambda(x)\bigg|
    \lesssim \int_{x_0+4r_0}^{K_1x_0} \frac{x^{2\lambda}}{x_0^{2\lambda}|x-x_0|}dx\lesssim \frac{x_0}{r_0}.$$
Using Proposition \ref{t:RieszCZ} (g), we get
$$\bigg|\int_{K_1x_0}^\infty \riz(x,x_0)dm_\lambda(x) \bigg|\gtrsim \int_{K_1x_0}^\infty  \frac1{x^{2\lambda+1}}x^{2\lambda}dx=\infty.$$
The above estimates allows us to deduce that
$$\bigg|\int_{\mathbb R_+ \setminus (4I)} \riz(x,x_0)dm_\lambda(x) \bigg|=\infty.$$
If \eqref{eq H7} holds for $I=(x_0-r_0,x_0+r_0)$ with $0<r_0\le x_0$, and $u=x_0$ and
a $(1, 2)_{\tbz }$-atom $a$ with supp $(a)\subseteq I$, then 
$$\int_I b(y)a(y)dm_\lambda(y)=0.$$

We have proved that $[b,\riz]$ can be extended from
form span$\{(1, 2)_{\tbz }$-atom$\}$ to $H^1(\mathbb R_+, dm_\lambda)$ as a bounded operator from $H^1(\mathbb R_+, dm_\lambda)$ into 
$L^1(\mathbb R_+,d m_\lambda)$ if, and only if, for every $(1, 2)_{\tbz }$-atom $a$,
$\int_0^\infty b(y)a(y)dm_\lambda(y)=0$
since $b\in {\rm BMO}(\mathbb R_+, dm_\lambda)=\big(H^1(\mathbb R_+, dm_\lambda)\big)'$, if $$\int_0^\infty b(y)a(y)dm_\lambda(y)=0$$ for every $(1, 2)_{\tbz }$-atom $a$, then $b=0$ as an element of 
${\rm BMO}(\mathbb R_+)$, that is $b$ is a constant for almost all $\mathbb R_+$.
\end{proof}

%%%%%%%%%%%%%%%%%%%%%%%%%%%%
%%%%%%%%%%%%%%%%%%%%%%%%
%%%%%%%%%%%%%%%%%%%%%%%%%%%%

\section{The proof of Theorem \ref{thm 1.7}}

%\color{blue}
%BDW: I think this section is a little hard to follow.  It is a long computation to begin with about estimates on a kernel of the operator.  I suggest restructuring it to be some lemmas that can be isolated and then called when the fact we need is used.  In particular, from here to Lemma 5.1 needs to be modified.\color{black}

{%\color{red}
Let $K^\alpha(x,y)$ be the kernel of $\tbz^{-\alpha/2}$.
Then 
$$
K^\alpha(x,y)\sim (x+y)^{-2\lambda}
\begin{cases} |x-y|^{\alpha-1}, &  0<\alpha<1,\\
   \ln \frac{2(x+y)}{|x-y|}, &\alpha=1,\\  
   (x+y)^{\alpha-1}, &1<\alpha<{\bf Q}.
\end{cases}
$$
The above estimates can be found in \cite[Theorem 2.1]{NS}. It follows that

\begin{lem}
Let $0<\alpha<1$. We have 
\begin{equation}\label{frac size}
K^\alpha(x,y)\sim \frac{|x-y|^\alpha}{m_\lambda(I(x,|x-y|))}.
\end{equation}
\end{lem}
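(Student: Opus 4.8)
The plan is to reduce the claimed equivalence to the elementary volume estimate $m_\lambda(I(x,r))\sim r(x+r)^{2\lambda}$, valid for all $x,r\in\mathbb R_+$, and then to compare $x+|x-y|$ with $x+y$; combined with the size estimate for $K^\alpha$ quoted from \cite[Theorem 2.1]{NS} this will give \eqref{frac size} directly.

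First I would record the volume estimate. Writing $I(x,r)=(\max\{0,x-r\},\,x+r)$ and integrating $t^{2\lambda}$, one has $m_\lambda(I(x,r))=\tfrac1{2\lambda+1}\big((x+r)^{2\lambda+1}-(\max\{0,x-r\})^{2\lambda+1}\big)$. When $r\ge x$ the subtracted term vanishes and $(x+r)^{2\lambda+1}\sim r\,(x+r)^{2\lambda}$ since $x+r\sim r$; when $r<x$, the mean value theorem applied to $t\mapsto t^{2\lambda+1}$ on $[x-r,x+r]$ gives $(x+r)^{2\lambda+1}-(x-r)^{2\lambda+1}=(2\lambda+1)\,2r\,\xi^{2\lambda}$ for some $\xi\in(x-r,x+r)$, and $\xi\sim x\sim x+r$. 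In both cases $m_\lambda(I(x,r))\sim r(x+r)^{2\lambda}$.

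Next I would take $r=|x-y|$ and note that $x+|x-y|\sim x+y$ for all $x,y\in\mathbb R_+$: if $y\le x$ then both quantities lie between $x$ and $2x$, while if $y>x$ then $x+|x-y|=y$ and $y\le x+y\le 2y$. Hence $m_\lambda(I(x,|x-y|))\sim |x-y|(x+y)^{2\lambda}$. (Equivalently, one could invoke the doubling property \eqref{doub}, since $I(x,|x-y|)$ and $I(y,|x-y|)$ each contain the other's centre and so have comparable measure.) Therefore, using the size estimate $K^\alpha(x,y)\sim (x+y)^{-2\lambda}|x-y|^{\alpha-1}$ valid for $0<\alpha<1$,
\[
K^\alpha(x,y)\sim \frac{|x-y|^{\alpha-1}}{(x+y)^{2\lambda}}=\frac{|x-y|^\alpha}{|x-y|(x+y)^{2\lambda}}\sim \frac{|x-y|^\alpha}{m_\lambda(I(x,|x-y|))},
\]
which is the assertion.

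There is no genuine obstacle in this argument, since every step is a two-sided comparison; the only points that need a little care are the volume computation in the regime $r<x$ (where one must avoid the crude bound that loses the correct power of $x$) and the elementary comparison $x+|x-y|\sim x+y$, both of which are routine. I would present the lemma essentially as the short chain of $\sim$'s above, preceded by the volume estimate as a one-line remark or an auxiliary fact.
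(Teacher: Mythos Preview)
Your argument is correct and is essentially the paper's own proof, repackaged: both reduce to $m_\lambda(I(x,|x-y|))\sim |x-y|\,(x+y)^{2\lambda}$, which the paper obtains by case analysis on the relative sizes of $x$, $y$, $|x-y|$ and you obtain via the cleaner route $m_\lambda(I(x,r))\sim r(x+r)^{2\lambda}$ together with $x+|x-y|\sim x+y$. The only point to tighten is the mean-value step when $r<x$: the bare inclusion $\xi\in(x-r,x+r)$ does not by itself force $\xi\gtrsim x$ (consider $r$ close to $x$), but the needed lower bound follows immediately from $\int_{x-r}^{x+r}t^{2\lambda}\,dt\ge\int_x^{x+r}t^{2\lambda}\,dt\ge r\,x^{2\lambda}\gtrsim r(x+r)^{2\lambda}$.
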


\begin{proof}
%Let $0<\alpha<1$. 
Suppose $x,y\in 
\mathbb R_+$ with $x<y$. If $x>2|x-y|$, then 
$$\frac1{(x+y)^{2\lambda}}\frac1{|x-y|^{1-\alpha}}\sim
 \frac1{x^{2\lambda}}\frac1{|x-y|^{1-\alpha}}\sim \frac{|x-y|^\alpha}{m_\lambda(I(x,|x-y|))}.$$
If $x\le 2|x-y|$, then $x+y\sim |x-y|$ which gives
$$\frac1{(x+y)^{2\lambda}}\frac1{|x-y|^{1-\alpha}}\sim
 \frac1{|x-y|^{2\lambda}}\frac1{|x-y|^{1-\alpha}}\sim \frac{|x-y|^\alpha}{m_\lambda(I(x,|x-y|))}.$$
Suppose $x,y\in 
\mathbb R_+$ with $y<x$, then we have 
$$\frac1{(x+y)^{2\lambda}}\frac1{|x-y|^{1-\alpha}}\sim \frac{|x-y|^\alpha}{m_\lambda(I(y,|x-y|))}\sim \frac{|x-y|^\alpha}{m_\lambda(I(x,|x-y|))}.$$
\end{proof}

Using the arguments in the proof of  \cite[Theorem 2.1]{NS}, we also have the smooth conditions as follows.

\begin{lem}
Let $0<\alpha<1$. If
$x,x',y\in\mathbb R_+$ with  $|x-x'|<|x-y|/2$, then
\begin{equation}\label{frac smooth}
|K^\alpha(x,y)-K^\alpha(x',y)|
\lesssim   \frac{|x-x'|}{|x-y|} \frac{|x-y|^\alpha}{m_\lambda(I(x,|x-y|))}.
\end{equation}
\end{lem}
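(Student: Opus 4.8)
The plan is to deduce the smoothness estimate \eqref{frac smooth} from a pointwise gradient bound on $K^\alpha(\cdot,y)$ combined with the fundamental theorem of calculus, in the same spirit as the way the Calder\'on--Zygmund estimate for $\riz$ in Proposition \ref{t:RieszCZ}(c) is obtained from the Poisson kernel. First I would record the closed form of the kernel that underlies the estimates of \cite[Theorem 2.1]{NS}. Subordinating the Poisson semigroup, $\Gamma(\alpha)\,\tbz^{-\alpha/2}=\int_0^\infty P_t^{[\lambda]}\,t^{\alpha-1}\,dt$, and inserting the explicit Poisson kernel recalled in Section \ref{s2}, the $t$-integral evaluates in closed form and yields, for $0<\alpha<1<\mathbf Q$,
\[
K^\alpha(x,y)=c_{\alpha,\lambda}\int_0^\pi\frac{(\sin\theta)^{2\lambda-1}}{(x^2+y^2-2xy\cos\theta)^{\mu}}\,d\theta,\qquad \mu:=\lambda+\frac{1-\alpha}{2}>\lambda,
\]
the convergence of the $t$-integral at both endpoints being guaranteed precisely by $0<\alpha<\mathbf Q$. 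Since $2\lambda-1>-1$, one may differentiate under the integral sign away from the diagonal, obtaining
\[
\partial_x K^\alpha(x,y)=-2\mu\, c_{\alpha,\lambda}\int_0^\pi\frac{(x-y\cos\theta)(\sin\theta)^{2\lambda-1}}{(x^2+y^2-2xy\cos\theta)^{\mu+1}}\,d\theta.
\]

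The next step is the pointwise bound $|\partial_x K^\alpha(x,y)|\lesssim(x+y)^{-2\lambda}|x-y|^{\alpha-2}$. For this I would use the elementary identity $x^2+y^2-2xy\cos\theta=(x-y\cos\theta)^2+y^2\sin^2\theta$, which gives $|x-y\cos\theta|\le(x^2+y^2-2xy\cos\theta)^{1/2}$ and hence
\[
|\partial_x K^\alpha(x,y)|\lesssim\int_0^\pi\frac{(\sin\theta)^{2\lambda-1}}{(x^2+y^2-2xy\cos\theta)^{\mu+1/2}}\,d\theta,
\]
an angular integral of the same type with exponent $\mu+\tfrac12$, still larger than $\lambda$. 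The bound then follows from the general estimate $\int_0^\pi(\sin\theta)^{2\lambda-1}(x^2+y^2-2xy\cos\theta)^{-\nu}\,d\theta\lesssim(x+y)^{-2\lambda}|x-y|^{2\lambda-2\nu}$ for $\nu>\lambda$, which one proves exactly as in \cite[Theorem 2.1]{NS}: write $x^2+y^2-2xy\cos\theta\sim|x-y|^2+xy\theta^2$ on $(0,\pi)$, substitute $u=\sqrt{xy}\,\theta/|x-y|$, and split according to whether $xy\lesssim|x-y|^2$ or $xy\gtrsim|x-y|^2$.

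Finally, given $x,x',y\in\mathbb R_+$ with $|x-x'|<|x-y|/2$, the closed segment joining $x'$ and $x$ stays away from $y$, and every point $s$ on it satisfies $|s-y|\sim|x-y|$ and $s+y\sim x+y$. Therefore
\[
|K^\alpha(x,y)-K^\alpha(x',y)|\le\int_{[x',x]}|\partial_s K^\alpha(s,y)|\,ds\lesssim|x-x'|\,(x+y)^{-2\lambda}|x-y|^{\alpha-2},
\]
and rewriting $(x+y)^{-2\lambda}|x-y|^{\alpha-2}=\dfrac{1}{|x-y|}\,(x+y)^{-2\lambda}|x-y|^{\alpha-1}$ together with $m_\lambda(I(x,|x-y|))\sim(x+y)^{2\lambda}|x-y|$ (verified by the same case split $x>2|x-y|$ versus $x\le 2|x-y|$ used to establish \eqref{frac size}) gives exactly \eqref{frac smooth}. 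I expect the only genuinely delicate point to be the uniform evaluation of the angular integrals across the two regimes $xy\lesssim|x-y|^2$ and $xy\gtrsim|x-y|^2$; the rest is bookkeeping, and indeed one may instead simply differentiate the estimates already established in \cite[Theorem 2.1]{NS}, which is the route suggested by the sentence preceding the statement.
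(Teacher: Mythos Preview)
Your argument is correct and reaches the same conclusion as the paper, but by a genuinely different route. Both proofs aim at the gradient bound $|\partial_x K^\alpha(x,y)|\lesssim(x+y)^{-2\lambda}|x-y|^{\alpha-2}$ and then finish identically by integrating along the segment $[x',x]$. The paper, however, represents $K^\alpha$ through the \emph{heat} semigroup, $K^\alpha(x,y)=c\int_0^\infty W_t^\lambda(x,y)\,t^{\alpha/2-1}\,dt$, differentiates using the Bessel-function identity $\partial_z(z^{-\mu}I_\mu)=z^{-\mu}I_{\mu+1}$, and then controls the resulting $t$-integral in the two regimes $xy\le t$ and $xy\ge t$ by combining the small- and large-argument asymptotics of $I_\mu$ with \cite[Lemma~3.1]{NS}; a further case split $(x-y)^2\lessgtr xy$ is needed to assemble the final bound. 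Your approach subordinates through the \emph{Poisson} semigroup instead, carries out the $t$-integration in closed form to obtain a single angular integral with exponent $\mu=\lambda+(1-\alpha)/2>\lambda$, differentiates in $x$, uses $|x-y\cos\theta|\le(x^2+y^2-2xy\cos\theta)^{1/2}$, and then estimates the angular integral directly by the substitution $u=\sqrt{xy}\,\theta/|x-y|$. Your route is more self-contained, avoiding special-function asymptotics entirely and making the role of the hypothesis $\alpha<1$ transparent (it is exactly what ensures $\mu>\lambda$ so that the angular integral converges after the loss of a half power); the paper's route stays closer to the machinery already set up in \cite{NS} and adapts more readily to the cases $\alpha\ge1$, where your closed-form angular kernel degenerates.
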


\begin{proof}
%Let $0<\alpha<1$. Using the arguments in the proof of  \cite[Theorem 2.1]{NS} one concludes that for $x,x',y\in\mathbb R_+$ with  $|x-x'|<|x-y|/2$,
%\begin{equation}\label{frac smooth}
%|K^\alpha(x,y)-K^\alpha(x',y)|
%\lesssim   \frac{|x-x'|}{|x-y|} \frac{|x-y|^\alpha}{m_\lambda(I(x,|x-y|))}.
%\end{equation}
%The following is the proof of \eqref{frac smooth}.
We first demonstrate that
\begin{equation}\label{frac smooth 1}
\Big|\frac\partial{\partial x} K^\alpha(x,y)\Big|\lesssim \frac{|x-y|^\alpha}{|x-y|^2(x+y)^{2\lambda}}, \qquad x,y\in(0,\infty),x\ne y.
\end{equation}
Let $I_\lambda$ denote the Bessel function of the second kind of order $\lambda$. We have that
$$K^\alpha(x,y)=\frac1{\Gamma(\alpha)}\int_0^\infty W^\lambda_t(x,y)t^{\frac\alpha2-1}dt,  \qquad x,y\in(0,\infty),x\ne y,$$
where $W^\lambda_t(x,y)$ is the heat kernel associated with $\tbz$ with the explicit expression as follows (see for example \cite{bdt})
$$W^\lambda_t(x,y)=\frac1{2t}\exp\Big(-\frac{x^2+y^2}{4t}\Big)(xy)^{-\lambda+\frac12}I_{\lambda-\frac12}\Big(\frac{xy}{2t}\Big), \qquad t,x,y\in(0,\infty).$$

Since $\frac\partial{\partial_z}(z^{-\mu}I_\mu(z))=z^{-\mu}I_{\mu+1}(z), z\in (0,\infty)$, we obtain
$$\frac\partial{\partial x}  W^\lambda_t(x,y)= \frac1{2t}\exp\Big(-\frac{x^2+y^2}{4t}\Big)\Big[-\frac x{2t}(xy)^{-\lambda+\frac12}I_{\lambda-\frac12}\Big(\frac{xy}{2t}\Big)+\frac y{2t}(xy)^{-\lambda+\frac12}I_{\lambda+\frac12}\Big(\frac{xy}{2t}\Big)\Big]$$
for $t,x,y\in(0,\infty)$.
Using the asymptotic expansion of $I_\mu$ and the asymptotic $I_\mu(z)\sim \frac{z^\mu}{2^\mu\Gamma(\mu+1)},$ as $z\to 0^+,$
we deduce that
\begin{align*}
\Big|\frac\partial{\partial x}  W^\lambda_t(x,y)\Big|
\lesssim\begin{cases} \frac{\displaystyle x+y}{\displaystyle  t^{\lambda+\frac32}}\exp\Big(-\frac{\displaystyle x^2+y^2}{\displaystyle 4t}\Big), & xy\le t, \\[15pt]
\big(\frac{\displaystyle |x-y|}{\displaystyle \sqrt t}+\frac{\displaystyle x+y}{\displaystyle \sqrt t}\big)\ \frac{\displaystyle 1}{\displaystyle t}\ \exp\Big(-\frac{\displaystyle (x-y)^2}{\displaystyle 4t}\Big)(xy)^{-\lambda}, & xy\ge t,
\end{cases}\qquad t,x,y\in(0,\infty).
\end{align*}
Thus, 
\begin{align*}
\Big|\frac\partial{\partial x} K^\alpha(x,y)\Big|
&\lesssim (x+y)\bigg[(xy)^{-\lambda}\int_0^{xy} \exp\Big(-\frac{(x-y)^2}{4t}\Big) t^{\frac{\alpha-4}2}dt
\Big(\frac1{x+y}+\frac1{\sqrt{xy}}\Big)\\
&\hskip2cm+\int_{xy}^\infty \exp\Big(-\frac{x^2+y^2}{4t}\Big)t^{\frac\alpha2-\lambda-\frac52}dt\bigg]\\
&\lesssim (x+y)(xy)^{-\lambda+\frac{\alpha-3}2}\bigg[\int_0^1 \exp\Big(-\frac{(x-y)^2}{4xyu}\Big)u^{\frac{\alpha-4}2}du\Big(\frac1{x+y}+\frac1{\sqrt{xy}}\Big)\\
&\hskip2cm+\int_0^1 \exp\Big(-\frac{x^2+y^2}{4xy}u\Big)u^{\lambda+\frac12-\frac\alpha2}du\bigg],
\qquad x,y\in(0,\infty).
\end{align*}
According to \cite[Lemma 3.1]{NS}, 
$$\int_0^1\exp\Big(-\frac{(x-y)^2}{4xyu}\Big)u^{\frac{\alpha-4}2}du
   \lesssim \exp\Big(-c\frac{(x-y)^2}{xy}\Big)\Big(\frac{(x-y)^2}{xy}\Big)^{\frac{\alpha-2}2},
   \qquad x,y\in(0,\infty), x\ne y,$$
and
$$\int_0^1 \exp\Big(-\frac{x^2+y^2}{4xy}u\Big)u^{\lambda+\frac12-\frac\alpha2}du
\lesssim \Big(\frac{x^2+y^2}{xy}\Big)^{-\lambda-\frac32+\frac\alpha2},
   \qquad x,y\in(0,\infty), x\ne y.$$
It follows that for $x,y\in(0,\infty)$ with $x\ne y$,
\begin{align*}
&\Big|\frac\partial{\partial x} K^\alpha(x,y)\Big|\\
&\lesssim (x+y)\bigg[\exp\Big(-c\frac{(x-y)^2}{xy}\Big) |x-y|^{\alpha-2}(xy)^{-\lambda}\Big(\frac1{x+y}+\frac1{\sqrt{xy}}\Big)+ (x+y)^{\alpha-2\lambda-3}\bigg].
\end{align*}
Suppose that $(x-y)^2\le xy$. Then, $xy\sim (x+y)^2$ and we have
$$\Big|\frac\partial{\partial x} K^\alpha(x,y)\Big|\lesssim \frac{|x-y|^\alpha}{|x-y|^2(x+y)^{2\lambda}}.$$
On the other hand, if $(x-y)^2\ge xy$, then either $y<x/C$ or $y>Cx$, for some fixed $C>1$.
Suppose that $y<x/C$. We get
\begin{align*}
\Big|\frac\partial{\partial x} K^\alpha(x,y)\Big|
&\lesssim e^{-c\frac xy}|x-y|^{\alpha-2}(xy)^{-\lambda}\Big(1+\sqrt{\frac xy}\Big)+(x+y)^{\alpha-2\lambda-2}\\
&\lesssim |x-y|^{\alpha-2}x^{-2\lambda}+(x+y)^{\alpha-2\lambda-2}\\
&\lesssim \frac{|x-y|^\alpha}{|x-y|^2(x+y)^{2\lambda}}.
\end{align*}
When $y>Cx$ it is sufficient to exploit symmetry. Thus, \eqref{frac smooth 1} is proved.

Let $x,x',y\in\mathbb R_+$ with  $|x-x'|<|x-y|/2$. We have that
\begin{align*}
|K^\alpha(x,y)-K^\alpha(x',y)|
&\lesssim \bigg|\int_{x'}^x \frac{|z-y|^\alpha}{|z-y|^2(z+y)^{2\lambda}} dz\bigg|\\
&\lesssim |x-x'|\frac1{|x-y|^{2-\alpha}(x+y)^{2\lambda}}\\
&\lesssim   \frac{|x-x'|}{|x-y|} \frac{|x-y|^\alpha}{m_\lambda(I(x,|x-y|))}.
\end{align*}
The proof is complete.
\end{proof}

Next, we establish the following auxiliary maximal function estimate, which will be used in the proof of Theorem \ref{thm 1.7}.

Suppose for each interval $I$ we have a function $h^I$, defined on this interval. Let $\beta\ge 0$ and define
$$h_\beta(x)=\sup_{I\ni x}\frac1{m_\lambda(I)^{1+\frac \beta{\mathbf  Q}}}
                       \int_I |h^I(y)|dm_\lambda(y),\qquad x\in (0,\infty),$$
where the supremum is taken over all the intervals $I\subseteq (0,\infty)$ such that $x\in I$.
To prove Theorem  \ref{thm 1.7}, we need the following lemma.

\begin{lem}\label{pq lemma}
Let $0\le \beta<\alpha<\infty$ and $1<p<q<\infty, 1/p-1/q=(\alpha-\beta)/{\mathbf  Q}$. Suppose for each interval $I$ we have a function $h^I$, defined on this interval.
Then, 
\begin{equation}\label{lemma 5.1}
\|h_\beta\|_{L^q(\mathbb R_+, dm_\lambda)} \lesssim \|h_\alpha\|_{L^p(\mathbb R_+, dm_\lambda)},
%\begin{equation}\label{lemma 5.1}
%\Big\|\sup_{ I\ni \cdot} \frac1{m_\lambda(I)^{1+\beta/{\mathbf  Q}}}\int_I |h^I|\Big\|_{L^q(\mathbb R_+, dm_\lambda)} \lesssim 
%       \Big\|\sup_{ I\ni \cdot} \frac1{m_\lambda(I)^{1+\alpha/{\mathbf  Q}}}\int_I |h^I|\Big\|_{L^p(\mathbb R_+, dm_\lambda)} ,
\end{equation}
where the implied constant depends only on $p,q,\alpha$ and $\lambda$.
\end{lem}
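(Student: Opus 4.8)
The plan is to dominate $h_\beta$ pointwise by a fractional maximal function applied to $h_\alpha$, and then to invoke the $L^p$–$L^q$ boundedness of that operator on the space of homogeneous type $(\mathbb R_+,|\cdot|,m_\lambda)$. For $0<\sigma<\mathbf Q$ set
\[
M_\sigma g(x):=\sup_{I\ni x}m_\lambda(I)^{\sigma/\mathbf Q}\,\frac1{m_\lambda(I)}\int_I|g(y)|\,dm_\lambda(y),
\]
the supremum running over intervals $I\subseteq(0,\infty)$ with $x\in I$, and let $M=M_0$ be the Hardy–Littlewood maximal operator. We may assume $h_\alpha\in L^p(\mathbb R_+,dm_\lambda)$, otherwise there is nothing to prove.

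The first step, which carries the only genuine idea, is the pointwise bound $h_\beta(x)\le M_{\alpha-\beta}(h_\alpha)(x)$. Fix an interval $I$. For every $z\in I$ the interval $I$ is one of the competitors in the supremum defining $h_\alpha(z)$, so
\[
\frac1{m_\lambda(I)^{1+\alpha/\mathbf Q}}\int_I|h^I(y)|\,dm_\lambda(y)\le h_\alpha(z),\qquad z\in I.
\]
Integrating this in $z$ over $I$ and dividing by $m_\lambda(I)$ converts the constant left-hand side into a genuine average of $h_\alpha$:
\[
\int_I|h^I(y)|\,dm_\lambda(y)\le m_\lambda(I)^{\alpha/\mathbf Q}\int_I h_\alpha(z)\,dm_\lambda(z).
\]
Dividing by $m_\lambda(I)^{1+\beta/\mathbf Q}$ and using $\tfrac{\alpha}{\mathbf Q}-1-\tfrac{\beta}{\mathbf Q}=\tfrac{\alpha-\beta}{\mathbf Q}-1$ gives, for every $x\in I$,
\[
\frac1{m_\lambda(I)^{1+\beta/\mathbf Q}}\int_I|h^I(y)|\,dm_\lambda(y)\le m_\lambda(I)^{(\alpha-\beta)/\mathbf Q}\,\frac1{m_\lambda(I)}\int_I h_\alpha\,dm_\lambda\le M_{\alpha-\beta}(h_\alpha)(x);
\]
taking the supremum over all $I\ni x$ yields the claim. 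Note that $\tfrac{\alpha-\beta}{\mathbf Q}=\tfrac1p-\tfrac1q\in(0,1)$, so $0<\alpha-\beta<\mathbf Q$ and $M_{\alpha-\beta}$ falls within the admissible range.

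It then remains to prove that $M_\sigma\colon L^p(\mathbb R_+,dm_\lambda)\to L^q(\mathbb R_+,dm_\lambda)$ is bounded whenever $1<p<q<\infty$, $0<\sigma<\mathbf Q$ and $\tfrac1p-\tfrac1q=\tfrac\sigma{\mathbf Q}$, which I would do by Hedberg's argument. For $g\ge0$ and $R>0$, split the supremum in $M_\sigma g(x)$ according to $m_\lambda(I)\le R$ and $m_\lambda(I)>R$: the first part is at most $R^{\sigma/\mathbf Q}Mg(x)$, while Hölder's inequality bounds the second by $m_\lambda(I)^{\sigma/\mathbf Q-1/p}\|g\|_p=m_\lambda(I)^{-1/q}\|g\|_p\le R^{-1/q}\|g\|_p$ (that $m_\lambda(\mathbb R_+)=\infty$ is harmless since $-1/q<0$). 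Hence $M_\sigma g(x)\le R^{\sigma/\mathbf Q}Mg(x)+R^{-1/q}\|g\|_p$; optimizing in $R$ — the two terms balance because $\tfrac\sigma{\mathbf Q}+\tfrac1q=\tfrac1p$ — gives $M_\sigma g(x)\lesssim\|g\|_p^{\,p\sigma/\mathbf Q}(Mg(x))^{1-p\sigma/\mathbf Q}$. Raising to the power $q$, integrating, using $q(1-p\sigma/\mathbf Q)=p$, and invoking the boundedness of $M$ on $L^p(\mathbb R_+,dm_\lambda)$ — valid since \eqref{doub} makes $(\mathbb R_+,|\cdot|,m_\lambda)$ a space of homogeneous type — yields $\|M_\sigma g\|_q\lesssim\|g\|_p$. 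Taking $g=h_\alpha$ and combining with the previous step gives \eqref{lemma 5.1}.

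I do not expect a real obstacle: the whole content is the averaging trick above, which replaces the self-referential definition of $h_\beta$ by a fractional average of $h_\alpha$, after which one is squarely inside the classical fractional-integral/maximal-function theory on spaces of homogeneous type. The only points that need care are the exponent bookkeeping (the identities $\tfrac{\alpha}{\mathbf Q}-1-\tfrac{\beta}{\mathbf Q}=\tfrac{\alpha-\beta}{\mathbf Q}-1$, $\tfrac\sigma{\mathbf Q}-\tfrac1p=-\tfrac1q$, and $q(1-p\sigma/\mathbf Q)=p$) and the verification that $0<\alpha-\beta<\mathbf Q$, so that $M_{\alpha-\beta}$ genuinely lies in the range where the fractional maximal estimate holds.
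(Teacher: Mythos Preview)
Your proof is correct. Both you and the paper begin from the same pivotal observation: since $\frac1{m_\lambda(I)^{1+\alpha/\mathbf Q}}\int_I|h^I|\,dm_\lambda\le h_\alpha(z)$ for every $z\in I$, the quantity defining $h_\beta$ is controlled by $m_\lambda(I)^{(\alpha-\beta)/\mathbf Q}$ times an average of $h_\alpha$ over $I$. From there the two arguments diverge. You pass directly to the fractional maximal function $M_{\alpha-\beta}(h_\alpha)$ and prove its $L^p\to L^q$ boundedness from scratch via Hedberg's trick, making the proof entirely self-contained. The paper instead introduces an auxiliary exponent $r<p$, bounds the infimum by the $L^r$ average, and then dominates $h_\beta$ pointwise by $\{I^\gamma_\lambda[h_\alpha^r]\}^{1/r}$ with $\gamma=r(\alpha-\beta)/\mathbf Q$, finally citing the fractional-integral boundedness of \cite{Pan} on spaces of homogeneous type with exponents $\tilde p=p/r$, $\tilde q=q/r$. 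Your route is shorter and avoids both the extra parameter $r$ and the external reference; the paper's route has the minor advantage that the fractional integral dominates the fractional maximal function, so its pointwise bound is formally weaker, but that plays no role here.
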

}
%\color{blue}
%BDW: I don't understand this Lemma.  Taken out of context it is hard to understand the supremum $\sup_{ I\ni \cdot}$.  I suggest you define the "maximal" function you are interested in, prior to the Lemma, as opposed to inside the Lemma.  Then the lemma can be stated in terms of the appropriate maximal function.
%\color{black}

\begin{proof}
%Let $\beta\ge 0$. We define
%$$h_\beta(x)=\sup_{I\ni x}\frac1{m_\lambda(I)^{1+\frac \beta{\mathbf  Q}}}
%                       \int_I |h^I|dm_\lambda,\qquad x\in (0,\infty),$$
%where the supremum is taken over all the intervals $I\subseteq (0,\infty)$ such that $x\in I$.
Let $0<r<\frac{\mathbf  Q}{\alpha-\beta}$ and let $I$ be an interval in $(0,\infty)$. We have that
\begin{align*}
\frac1{m_\lambda(I)^{1+\frac \beta{\mathbf  Q}}}
                       \int_I |h^I|dm_\lambda
&\le m_\lambda(I)^{\frac{\alpha-\beta}{\mathbf  Q}}\int_{u\in I}h_\alpha(u) \\
&\le m_\lambda(I)^{\frac{\alpha-\beta}{\mathbf  Q}-\frac1r}
       \bigg(\int_I h_\alpha(u)^r dm_\lambda(u)\bigg)^{1/r} \\
&=\bigg(\frac1{m_\lambda(I)^{1-\frac{r(\alpha-\beta)}{\mathbf  Q}}}
         \int_I h_\alpha(u)^r dm_\lambda(u)\bigg)^{1/r} \\
&\lesssim \bigg(\int_I \frac{h_\alpha(u)^r}{m_\lambda(I(x,|x-u|))^{1-\frac{r(\alpha-\beta)}{\mathbf  Q}}}
         dm_\lambda(u)\bigg)^{1/r}, \quad x\in I. 
\end{align*}
We define, for every $0<\gamma<1$, the fractional integral $I^\gamma_\lambda$ by
$$I^\gamma_\lambda(f)(x)=\int_0^\infty \frac{f(u)}{m_\lambda(I(x,|x-u|))^{1-\gamma}}dm_\lambda(u).$$
Then, 
\begin{equation}\label{hbeta}
h_\beta(x)\lesssim \{I^\gamma_\lambda[h_\alpha]^r(x)\}^{1/r},\qquad x\in (0,\infty).
\end{equation}
It follows from \cite[Theorem 1]{Pan} that the operator $I^\gamma_\lambda$ maps $L^{\tilde p}$ boundedly into $L^{\tilde q}$ whenever $1<\tilde p<\tilde q$ and $\frac1{\tilde p}=\frac1{\tilde q}+\gamma$.
Let $\tilde p=p/r$ and $\tilde q=q/r$ with $r<p$ and $\gamma=\frac{r(\alpha-\beta)}{\mathbf  Q}<1$ as above. Then with $g=I^\gamma_\lambda[(h_\alpha)^r]$, we have from \eqref{hbeta}
\begin{align*}
\|h_\beta\|_{L^q(\mathbb R_+, dm_\lambda)}
&\lesssim \|g^{1/r}\|_{L^q(\mathbb R_+, dm_\lambda)}
   =\|g\|^{1/r}_{L^{\tilde q}(\mathbb R_+, dm_\lambda)}\\
&\lesssim \|(h_\alpha)^r\|_{L^{\tilde p}(\mathbb R_+, dm_\lambda)}\\
&=\|h_\alpha\|_{L^p(\mathbb R_+, dm_\lambda)},
\end{align*}
which is \eqref{lemma 5.1}.
\end{proof}  

Now we  show Theorem  \ref{thm 1.7}.

\begin{proof}[Proof of Theorem  \ref{thm 1.7}]
Fix $x\in \mathbb R_+, k\in \mathbb Z$. Set $I=I(x, 2^{-k})$ and $f\in L^p(\mathbb R_+,dm_\lambda)$. 
Let $f_1=f\chi_{(2I)}$ and $f_2=f-f_1$. Observe that $[b,\tbz ^{-\alpha/2}]f=[b-b_I,\tbz ^{-\alpha/2}]f$, and we have:
\begin{align*}
&\frac1{m_\lambda(I) 2^{-k\beta}}\int_I |[b,\tbz ^{-\alpha/2}]f(y)-([b,\tbz ^{-\alpha/2}]f)_I | dm_\lambda(y)  \\
&=\frac1{m_\lambda(I) 2^{-k\beta}}\int_I |[b-b_I,\tbz ^{-\alpha/2}]f(y)-([b-b_I,\tbz ^{-\alpha/2}]f)_I|dm_\lambda(y)  \\
&\le \frac2{m_\lambda(I) 2^{-k\beta}}\int_I |[b-b_I,\tbz ^{-\alpha/2}]f(y)-\tbz ^{-\alpha/2}((b-b_I)f_2)(x)|dm_\lambda(y)  \\
&\le \frac2{m_\lambda(I) 2^{-k\beta}}\int_I |(b(y)-b_I)\tbz ^{-\alpha/2}f(y)|dm_\lambda(y)\\
&\quad +
       \frac2{m_\lambda(I) 2^{-k\beta}}\int_I |\tbz ^{-\alpha/2}((b-b_I)f_1)(y)|dm_\lambda(y)  \\
&\qquad + \frac2{ 2^{-k\beta}} \sup_{y\in I} |\tbz ^{-\alpha/2}((b-b_I)f_2)(y)-\tbz ^{-\alpha/2}((b-b_I)f_2)(x)|.
\end{align*}
Next, use Proposition \ref{defference} to obtain
\begin{align*}
 &\|[b,\tbz ^{-\alpha/2}]f\|_{\dot F_{\lambda ,q}^{\beta,\infty}} \\
 &\lesssim \Big\|\sup_{I(\cdot,2^{-k})} \frac1{m_\lambda(I) 2^{-k\beta}}\int_I |(b(y)-b_I)\tbz ^{-\alpha/2}f(y)|dm_\lambda(y)\Big\|_{L^q(\mathbb R_+, dm_\lambda)}\\
 &\qquad + \Big\|\sup_{I(\cdot,2^{-k})} \frac1{m_\lambda(I) 2^{-k\beta}}\int_I |\tbz ^{-\alpha/2}((b-b_I)f_1)(y)|dm_\lambda(y)\Big\|_{L^q(\mathbb R_+, dm_\lambda)}\\
 &\qquad + \Big\|\sup_{I(\cdot,2^{-k})}\frac1{2^{-k\beta}} \sup_{y\in I} |\tbz ^{-\alpha/2}((b-b_I)f_2)(y)-\tbz ^{-\alpha/2}((b-b_I)f_2)(\cdot)|\Big\|_{L^q(\mathbb R_+, dm_\lambda)}\\
 &\lesssim\Big\|\sup_{I(\cdot,2^{-k})} \frac1{m_\lambda(I) 2^{-k\beta}}\int_I |(b(y)-b_I)\tbz ^{-\alpha/2}f(y)|dm_\lambda(y)\Big\|_{L^q(\mathbb R_+, dm_\lambda)}\\
&\qquad + \Big\|\sup_{I(\cdot,2^{-k})} \frac1{m_\lambda(I)^{1+\frac{\alpha}{\mathbf Q}}2^{-k\beta}}\int_I |\tbz ^{-\alpha/2}((b-b_I)f_1)(y)|dm_\lambda(y)\Big\|_{L^p(\mathbb R_+, dm_\lambda)}\\
 &\qquad + \Big\|\sup_{I(\cdot,2^{-k})}\frac1{2^{-k\beta} m_\lambda(I)^{\frac{\alpha}{\mathbf Q}}}
  \sup_{y\in I} |\tbz ^{-\alpha/2}((b-b_I)f_2)(y)-\tbz ^{-\alpha/2}((b-b_I)f_2)(\cdot)|\Big\|_{L^p(\mathbb R_+, dm_\lambda)}\\
 &=: I\!I\!I_1+ I\!I\!I_2 +I\!I\!I_3,
\end{align*}
where the last two inequalities follow from Lemma \ref{pq lemma}.

We estimate these terms individually. For $1<t<p$, H\"older's inequality gives
\begin{align*}
&\frac1{m_\lambda(I)2^{-k\beta}}\int_I |(b(y)-b_I)\tbz ^{-\alpha/2}f(y)|dm_\lambda(y)\\
&\le \frac1{ 2^{-k\beta}}\Big(\frac1{m_\lambda(I)} \int_I |b(y)-b_I|^{t} dm_\lambda(y)\Big)^{\frac1{t'}} 
           \Big(\frac1{m_\lambda(I)}\int_I |\tbz ^{-\alpha/2}f(y)|^t dm_\lambda(y)\Big)^{\frac 1t}\\ 
&\lesssim \|b\|_{\Lambda^\beta}M_t(\tbz ^{-\alpha/2}f)(x),
\end{align*}
where the last inequality follows from Theorem \ref{lip}. 
Thus,  \cite[Theorem 2.1]{NS} implies that
\begin{align*}
I\!I\!I_1&\lesssim\|b\|_{\Lambda^\beta}\|M_t(\tbz ^{-\alpha/2}f)\|_{L^q(\mathbb R_+, dm_\lambda)} \\
&\lesssim \|b\|_{\Lambda^\beta}\|\tbz ^{-\alpha/2}f\|_{L^p(\mathbb R_+, dm_\lambda)} 
\\
&\lesssim \|b\|_{\Lambda^\beta}\|f\|_{L^p(\mathbb R_+, dm_\lambda)} .
\end{align*}

To estimate $I\!I\!I_2$, we choose $s, 1<s<p$, and $\bar s$ such that $\frac 1s-\frac 1{\bar s}=\frac{\alpha}{\mathbf  Q}$. 
By H\"older's inequality,
\begin{align*}
 &\frac1{m_\lambda(I)^{1+\frac{\alpha}{\mathbf  Q}} 2^{-k\beta}}\int_I |\tbz ^{-\alpha/2}((b-b_I)f_1)(y)|dm_\lambda(y)\\
 &\le\frac1{m_\lambda(I)^{1+\frac{\alpha}{\mathbf  Q}}2^{-k\beta}}\|\tbz ^{-\alpha/2}((b-b_I)f_1)\|_{L^{\bar s}(\mathbb R_+, dm_\lambda)}\ m_\lambda(I)^{1-\frac1{\bar s}}\\
&=\frac1{m_\lambda(I)^{\frac 1s}2^{-k\beta}}\|\tbz ^{-\alpha/2}((b-b_I)f_1)\|_{L^{\bar s}(\mathbb R_+, dm_\lambda)}.
 \end{align*}
 Then we use  \cite[Theorem 2.1]{NS} to obtain 
$$
\frac1{m_\lambda(I)^{1+\frac{\alpha}{\mathbf  Q}}2^{-k\beta}}\int_B |\tbz ^{-\alpha/2}((b-b_I)f_1)(y)|dm_\lambda(y)\lesssim \frac1{m_\lambda(I)^{\frac1s}  2^{-k\beta}}\|(b-b_I)f_1\|_{L^s(\mathbb R_+, dm_\lambda)}.
$$
It is easy to check
\begin{align*}
&{1\over m_\lambda(I)}\int_{\mathbb R_+} |b(x')-b_I|^s |f_1(x')|^s dm_\lambda(x')\\
&\lesssim  {1\over m_\lambda(2I)}\int_{2I} |b(x')-b_I|^s |f(x')|^s dm_\lambda(x')\\
&\lesssim  \bigg(\sup_{y\in 2I } |b(y)-b_I|\bigg)^s {1\over {m_\lambda(2I)}}\int_{2I}  |f(x')|^s dm_\lambda(x')\\
&\lesssim  \bigg(\sup_{y\in 2I }{1\over m_\lambda(I)} \int_I |b(y)-b(z)|dm_\lambda(z) \bigg)^s M^s_s(f)(x)\\
&\lesssim  \bigg( 2^{-k\beta} \sup_{y\in 2I } {1\over m_\lambda(I)}\int_I \sup_{|y-z|\ne 0}{|b(y)-b(z)|\over |y-z|^\beta }dm_\lambda(z) \bigg)^s M^s_s(f)(x)\\
&\lesssim  \|b\|_{\Lambda^\beta}^s 2^{-ks\beta}M^s_s(f)(x).
\end{align*}
We see that 
\begin{align*}
&\|(b-b_I)f_1\|_{L^s(\mathbb R_+, dm_\lambda)}
\lesssim  \|b\|_{\Lambda^\beta}2^{-k\beta} m_\lambda(I)^{1\over s}M_s(f)(x).
\end{align*}

Thus, we have that 
$$\frac1{m_\lambda(I)^{1+\frac{\alpha}{\mathbf  Q}} 2^{-k\beta}}\int_I |\tbz ^{-\alpha/2}((b-b_I)f_1)(y)|dm_\lambda(y)\\
\lesssim \|b\|_{\Lambda^\beta}M_s(f)(x)
$$
and hence
$I\!I\!I_2\lesssim \|b\|_{\Lambda^\beta}\|M_s(f)\|_{L^p(\mathbb R_+, dm_\lambda)}\lesssim  \|b\|_{\Lambda^\beta}\|f\|_{L^p(\mathbb R_+, dm_\lambda)}.$

Finally, we estimate $I\!I\!I_3$.  By \eqref{frac smooth},
\begin{align*}
& \frac1{2^{-k\beta} m_\lambda(I)^{\frac \alpha{\mathbf  Q}}} \bigg|\tbz ^{-\alpha/2}((b-b_I)f_2)(y)-\tbz ^{-\alpha/2}((b-b_I)f_2)(x)\bigg| \\
&=  \frac1{2^{-k\beta} m_\lambda(I)^{\frac \alpha{\mathbf  Q}}}  \bigg|\int_{\mathbb R_+} [K^\alpha(y,z) -K^\alpha(x,z)]  (b(z)-b_I)f_2(z)dm_\lambda(z)\bigg| \\
&\lesssim  \frac1{2^{-k\beta} m_\lambda(I)^{\frac \alpha{\mathbf  Q}}} \int_{\mathbb R_+} \frac{|x-y|}{|x-z|} \frac{|x-z|^\alpha}{m_\lambda(I(x,|x-z|))}   |b(z)-b_I||f_2(z)|dm_\lambda(z)\\
&\lesssim  \frac1{2^{-k\beta} m_\lambda(I)^{\frac \alpha{\mathbf  Q}}}  \int_{(2I)^c}  \frac{|x-y|}{|x-z|} \frac{|x-z|^\alpha}{m_\lambda(I(x,|x-z|))}  |b(z)-b_I||f(z)|dm_\lambda(z).
\end{align*}
Since 
$$m_\lambda(I(x,r))\sim x^{2\lambda}r+ r^{\mathbf  Q}\ge r^{\mathbf  Q},$$
we obtain 
\begin{align*}
& \frac1{2^{-k\beta} m_\lambda(I)^{\frac \alpha{\mathbf  Q}}} \bigg|\tbz ^{-\alpha/2}((b-b_I)f_2)(y)-\tbz ^{-\alpha/2}((b-b_I)f_2)(x)\bigg| \\
&\lesssim \frac1{2^{-k\beta}}\sum_{j=1}^\infty \int_{2^{j-k}<|x-z|\le 2^{j-k+1}}  
          2^{-j(1-\alpha)}\frac1{m_\lambda(I(x,|x-z|))}|b(z)-b_I| |f(z)|dm_\lambda(z) \\
&\lesssim \frac1{2^{-k\beta}}\sum_{j=1}^\infty \frac1{m_\lambda(I(x,2^{j-k}))}
      \int_{|x-z|\le 2^{j+1-k}}  2^{-j(1-\alpha)}    \big(|b(z)-b_{2^jI}|+|b_{2^jI}-b_I|\big)|f(z)|dm_\lambda(z) \\
&\lesssim \frac1{2^{-k\beta}}\sum_{j=1}^\infty \frac{2^{-j(1-\alpha)}}{m_\lambda(I(x,2^{j-k}))}\int_{|x-z|\le 2^{j-k+1}} 
          |b(z)-b_{2^jI}||f(z)|dm_\lambda(z)\\
 &\qquad +\sum_{j=1}^\infty2^{-j(1-\alpha)}\|b\|_{\Lambda^\beta} 2^{(j-k)\beta} M(f)(x)j \\
 &\lesssim \Big(\|b\|_{\Lambda^\beta} M_{ts}(f)(x) + \|b\|_{\Lambda^\beta}M(f)(x)\Big) \sum_{j=1}^\infty 2^{-j(1-\alpha-\beta)}j\\
 &\lesssim \Big(\|b\|_{\Lambda^\beta} M_{ts}(f)(x) + \|b\|_{\Lambda^\beta}M(f)(x)\Big)\end{align*}
 for $\alpha+\beta<1$.
 
Hence, we have 
$$I\!I\!I_3\lesssim \|b\|_{\Lambda^\beta}  \|f\|_{L^p(\mathbb R_+, dm_\lambda)}$$
for $\alpha+\beta<1$.
Since $\tbz ^{-\alpha/2}\tbz ^{-\beta/2}=\tbz ^{-{(\alpha+\beta)}/2}$, the assumption $\alpha+\beta<1$ can be removed by the method in \cite[Facts 1.9 and 1.10 ]{P}.  The proof is complete.
\end{proof}

\bigskip

{\bf Acknowledgments:} JB's partially supported by grant PID2019-106093GB-I00 from the Spanish Government.
 XD and JL's research supported by ARC DP 220100285. 
MYL's research supported by MOST 110-2115-M-008-009-MY2.
BDW’s research is supported in part by National Science Foundation Grants DMS \#1800057, \#2054863, and \#20000510 and ARC DP 220100285.

\bigskip

%\address{Jorge J. Betancor, 
\smallskip

Departamento de Análisis Matemático, Universidad de La Laguna, Campus de Anchieta, Avda. Astrofísico Francisco Sánchez, s/n, 38271, La Laguna (Sta. Cruz de Tenerife), Spain.

\smallskip

{\it E-mail}: \texttt{jbetanco@ull.es}
\vspace{0.3cm}

%\medskip

%Xuan Thinh Duong

\smallskip

Department of Mathematics, Macquarie University, NSW, 2109, Australia.

\smallskip

{\it E-mail}: \texttt{xuan.duong@mq.edu.au}

\vspace{0.3cm}

%Ming-Yi Lee, 

\smallskip

Department of Mathematics, National Central University, Taiwan

\smallskip

{\it E-mail}: \texttt{mylee@math.ncu.edu.tw}

\vspace{0.3cm}

%Ji Li

%\smallskip

Department of Mathematics, Macquarie University, NSW, 2109, Australia.

\smallskip

{\it E-mail}: \texttt{ji.li@mq.edu.au}

\vspace{0.3cm}

%Brett D. Wick

%\smallskip

Department of Mathematics, Washington University--St. Louis, St. Louis, MO 63130-4899 USA

\smallskip

{\it E-mail}: \texttt{wick@math.wustl.edu}

\vspace{0.3cm}

\end{document}